  \newcommand{\myfont}{}
  \newcommand{\mythmstyle}{plain}
  \newcommand{\mydefstyle}{definition}
  \DeclareMathOperator*{\dcup}{\sqcup}
  \DeclareMathOperator*{\bigdcup}{\bigsqcup}
  \newcommand{\Gr}[1]{#1}    
  \newcommand{\wGr}[1]{\bm #1}  
  \newcommand{\mGr}[1]{\overline #1}  
  \newcommand{\DirGr}[2]{#1^+_{#2}}  
  \newcommand{\laplDir}[2]{\lapl{\DirGr{#1}{#2}}} 
\newcommand{\G}{\Gr G}    
\newcommand{\wG}{\wGr G}  
\newcommand{\mG}{\mGr G}  
\newcommand{\lapl}[2][{}]{\Delta_{{#2}}^{{#1}}} 
\newcommand{\RmodZ}{\R/2\pi\Z}
\newcommand{\quotient}[2][]{#2/{\mathord {\sim_{#1}}}}
\newcommand{\ol}{\overline}   
\newcommand{\ori}[1]{#1^{\circ}}  
\newcommand{\quadtext}[1]{\quad\text{#1}\quad}
\newcommand{\qquadtext}[1]{\qquad\text{#1}\qquad}
\newcommand{\itemref}[1]{\noindent(\ref{#1})}
\newcommand{\myparagraph}[1]{\noindent\textbf{\myfont{#1}}}
\numberwithin{equation}{section}
\theoremstyle{\mythmstyle}       
\newtheorem{theorem}{Theorem}[section]
\newtheorem*{theorem*}{Theorem}
\newtheorem{proposition}[theorem]{Proposition}
\newtheorem{lemma}[theorem]{Lemma}
\newtheorem{corollary}[theorem]{Corollary}
\theoremstyle{\mydefstyle}        
\newtheorem{definition}[theorem]{Definition}
\newtheorem{example}[theorem]{Example}
\newtheorem{examples}[theorem]{Examples}
\newtheorem{remark}[theorem]{Remark}
\newtheorem*{remark*}{Remark}
\newtheorem*{commentolaf*}{Comment Olaf}
\newcommand{\Sec}[1]{Section~\ref{sec:#1}}
\newcommand{\App}[1]{Appendix~\ref{app:#1}}
\newcommand{\Subsec}[1]{Subsection~\ref{subsec:#1}}
\newcommand{\Fig}[1]{Figure~\ref{fig:#1}}
\newcommand{\Tab}[1]{Table~\ref{tab:#1}}
\newcommand{\Tabs}[2]{Tables~\ref{tab:#1} and~\ref{tab:#2}}
\newcommand{\TabS}[2]{Tables~\ref{tab:#1}--~\ref{tab:#2}}
\newcommand{\Thm}[1]{Theorem~\ref{thm:#1}}
\newcommand{\Ex}[1]{Example~\ref{ex:#1}}
\newcommand{\Lem}[1]{Lemma~\ref{lem:#1}}
\newcommand{\Cor}[1]{Corollary~\ref{cor:#1}}
\newcommand{\Prp}[1]{Proposition~\ref{prp:#1}}
\newcommand{\Prpenum}[2]{Proposition~\ref{prp:#1}~(\ref{#2})}
\newcommand{\Rem}[1]{Remark~\ref{rem:#1}}
\newcommand{\Def}[1]{Definition~\ref{def:#1}}
\newcommand{\abssqr}[2][{}]{\lvert{#2}\rvert^2_{#1}} 
\newcommand{\normsymb}{\|}
\newcommand{\normsqr}[2][{}]{\normsymb{#2}\normsymb^2_{#1}} 
\newcommand{\bigiprod}[3][{}]{\bigl\langle{#2},{#3}\bigr\rangle_{#1}}
\newcommand{\set}[2]{\{ \, #1 \, | \, #2 \, \} }      
\newcommand{\Bigset}[2]{\Bigl\{ \, #1 \, \Bigl|\Bigr. \, #2 \, \Bigr\} }
  \DeclareSymbolFont{stix@largesymbols}{LS2}{stixex}{m}{n}
  \DeclareMathDelimiter{\lMset}{\mathopen} {stix@largesymbols}{"E8}%
                                            {stix@largesymbols}{"0E}
  \DeclareMathDelimiter{\rMset}{\mathclose}{stix@largesymbols}{"E9}%
                                            {stix@largesymbols}{"0F}
\newcommand{\map}[3]{ #1 \colon #2 \longrightarrow #3}    
\newcommand{\bd}  {\partial}          
\newcommand{\restr}[1]{{\restriction}_{#1}} 
\newcommand{\card}[1]{\lvert#1\rvert}   
\newcommand{\specsymb} {\sigma} 
\newcommand{\spec}[2][{}]   {\specsymb_{\mathrm{#1}}(#2)}
\newcommand{\bigspec}[2][{}]   {\specsymb_{\mathrm{#1}}\bigl(#2\bigr)}
\newcommand{\eps}{\varepsilon} 
\renewcommand{\phi}{\varphi}   
\newcommand{\conj}[1]{\overline {#1}}
\newcommand{\R}{\mathbb{R}} 
\newcommand{\C}{\mathbb{C}} 
\newcommand{\N}{\mathbb{N}} 
\newcommand{\Z}{\mathbb{Z}} 
\newcommand{\1}{\mathbbm 1}                    
\newcommand{\e}{\mathrm e}  
\newcommand{\im}{\mathrm i} 
\newcommand{\wt}{\widetilde}           
\newcommand{\lsymb}    {\ell}          
\newcommand{\lpspace}[1][p]    {\lsymb_{#1}}     
\newcommand{\lsqrspace}    {\lpspace[2]}          
\newcommand{\lsqr}[2][{}]{\lsqrspace^{#1}({#2})}   
\newcommand{\Lsymb}    {\mathsf L}     
\newcommand{\Lpspace}[1][p]    {\Lsymb_{#1}}     
\newcommand{\Lsqrspace}    {\Lpspace[2]}     
\newcommand{\Lsqr}[2][{}]{\Lsqrspace^{#1}({#2})} 
\title{A geometric construction of isospectral magnetic graphs}%
\author{John Stewart Fabila-Carrasco$^*$} %
\address{School of Engineering, Institute for Digital Communications,
  University of Edinburgh, Edinburgh EH9 3FB, U.K. }
\email{John.Fabila@ed.ac.uk}
\author{Fernando Lled\'o} %
\address{Department of Mathematics, University Carlos III de Madrid,
  Avda. de la Universidad 30, 28911 Legan\'es (Madrid), Spain and
  Instituto de Ciencias Matem\'aticas (CSIC-UAM-UC3M-UCM), Madrid}
\email{flledo@math.uc3m.es}
\author{Olaf Post} %
\address{Fachbereich 4 -- Mathematik, Universit\"at Trier, 54286
  Trier, Germany} \email{olaf.post@uni-trier.de}
\date{\today}
\thanks{$^*$ Corresponding author\newline%
  JSFC was supported by the Leverhulme Trust via a Research
  Project Grant (RPG-2020-158). FLl was supported by the Severo Ochoa
  Programme for Centres of Excellence in R\&D (SEV-2015-0554) and from
  the Spanish National Research Council, through the \textit{Ayuda
    extraordinaria a Centros de Excelencia Severo Ochoa} (20205CEX001)
  and by the Madrid Government under the Agreement with UC3M in the
  line of \emph{Research Funds for Beatriz Galindo Fellowships}
  (C\&QIG-BG-CM-UC3M), and in the context of the V PRICIT}
\begin{document}

\begin{abstract}
  We present a geometrical construction of families of finite
  isospectral graphs labelled by different partitions of a natural
  number $r$ of given length $s$ (the number of summands).
  Isospectrality here refers to the discrete magnetic Laplacian with
  normalised weights (including standard weights).  The construction
  begins with an arbitrary finite graph $\bm G$ with normalised weight
  and magnetic potential as a building block from which we construct,
  in a first step, a family of so-called frame graphs
  $(\bm F_a)_{a \in \N}$.  A frame graph $\bm F_a$ is constructed
  contracting $a$ copies of $G$ along a subset of vertices $V_0$.  In
  a second step, for any partition $A=(a_1,\dots,a_s)$ of length $s$
  of a natural number $r$ (i.e., $r=a_1+\dots+a_s$) we construct a new
  graph $\bm F_A$ contracting now the frames
  $\bm F_{a_1},\dots,\bm F_{a_s}$ selected by $A$ along a proper
  subset of vertices $V_1\subset V_0$. All the graphs obtained by
  different $s$-partitions of $r\geq 4$ (for any choice of $V_0$ and
  $V_1$) are isospectral and non-isomorphic.

  In particular, we obtain increasing finite families of graphs which
  are isospectral for given $r$ and $s$ for different types of
  magnetic Laplacians including the standard Laplacian, the signless
  standard Laplacian, certain kinds of signed Laplacians and, also,
  for the (unbounded) Kirchhoff Laplacian of the underlying
  equilateral metric graph.  The spectrum of the isospectral graphs is
  determined by the spectrum of the Laplacian of the building block
  $G$ and the spectrum for the Laplacian with Dirichlet conditions on
  the set of vertices $V_0$ and $V_1$ with multiplicities determined
  by the numbers $r$ and $s$ of the partition.
\end{abstract}

\maketitle

%
\section{Introduction}
\label{sec:intro}
%


Since Weyl's seminal article in 1912 on the asymptotic distributions
of eigenvalues of the Laplacian on a compact Riemannian manifold
(\cite{weyl:12}, see also~\cite{ivrii:16}) and, in particular, since
Kac's celebrated article \emph{``Can one hear the shape of a
  drum?''}~\cite{kac:66}, the relation between the spectrum of the
Laplacian and the geometric properties of the underlying space has
been studied from many different perspectives. The problem was
proposed by Kac for the usual Laplacian on two-dimensional bounded
domains with Dirichlet condition at the boundary.  The first negative
answer to Kac's original question for \emph{two-}dimensional flat
domains 
was presented in~\cite{gww:92}
.  We refer to~\cite{gps:05,lu-rowlett:15,zelditch:14} for
some surveys on different aspects of isospectrality and inverse
spectral problems. An operator-theoretic point of view to the problem
of finding non-isometric isospectral domains can be found
in~\cite{arendt:02}.  Also, the
references~\cite{afm:22,lkbbss:22,park:22} show the importance and
timeliness of isospectrality in physics.

Recall that two linear operators with discrete spectrum are called
\emph{isospectral} (or \emph{cospectral}), if their spectra coincide,
i.e.\ they have the same eigenvalues with the same multiplicity. In
order to avoid trivial cases, it is important that the underlying
spaces are non-isomorphic (in the corresponding category).  Sunada
presented in~\cite{sunada:85} a systematic method for constructing
non-isometric manifolds where the corresponding Laplacians have the
same eigenvalues, answering negatively Kac's original question.
Sunada's method is based on certain quotients of a Riemannian normal
coverings and group theoretic properties of the covering groups.  An
application to discrete graphs and examples that do not arise from
Sunada's method can be found in~\cite{brooks:99}.  We refer to
\Sec{conclusions} for further remarks and for a comparison of our
method with Sunada's method.

An extension of Sunada's method is presented in~\cite{bpbs:09}
together with examples of isospectral quantum graphs (see
also~\cite{von-below:01,gutkin-smilansky:01,kurasov-muller:pre21}).
If one considers families of magnetic graphs with fixed underlying
discrete graph and the magnetic potential as a parameter, then certain
metric graphs that are isospectral for the entire family are
isomorphic, cf.~\cite{rueckriemen:13}.  An approach using the
Dirichlet-to-Neumann map of certain subgraphs is used
in~\cite[Section~4]{kurasov-muller:pre21} in order to construct
isospectral quantum graphs leading to similar examples as ours (see
also the last paragraph of \Sec{conclusions}). Additional references
including the relation between isospectral metric and discrete graphs
and the refinement of nodal or flip counts are, for
example,~\cite{gss:05,oren-band:12,juul-joyner:18}.
In~\cite{chernyshenko-pivovarchik:20} it is shown that equilateral
quantum graphs with only a few number of vertices are determined by
their spectrum; and in~\cite{pistol:23} isospectral equilateral
quantum graphs are determined with the help of a computer algebra
programme.  Another systematic approach for constructing isospectral
finite-dimensional operators using quotients is given
in~\cite{bbjl:pre17}.

In the context of discrete graphs isospectral constructions refer to
different operators: typically, to the adjacency matrix or the
combinatorial Laplacian (and its signless version), see
e.g.~\cite{godsil-mckay:82,cds:95,merris:97,tan:98,brooks:99,%
  halbeisen-hungerbuehler:99,%
  dam-haemers:03,haemers-spence:04,brouwer-haemers:12} or the
normalised Laplacian (see
e.g.~\cite{tan:98,butler:10,cavers:10,butler-grout:11,butler:15,%
  butler-heysse:16,dsg:17,hu-li:22}).  Since there is in general no
simple relation between these spectra for non-regular graphs,
isospectrality for discrete graphs strongly depends on the operator
chosen.  In this article we will focus on Laplacians with
\emph{normalised} weights (see \Subsec{mw.lapl} for a precise
definition) and there are several reasons for it:\footnote{Note that
  some authors use the name \emph{normalised} or \emph{geometric
    Laplacian} for what we call \emph{standard Laplacian} in this
  article, a special case of normalised weights (see
  e.g.~\cite{chung:97}); also the name \emph{weighted normalised
    Laplacian} is used in~\cite[Section~2.4]{osborne:13} for what we
  simply call \emph{normalised Laplacian}.}
\begin{itemize}
\item First, standard Laplacians have a direct connection to spectral
  geometry and stochastic processes (cf.~\cite{chung:97}) and their
  eigenvalues are sometimes called \emph{geometric}
  (cf.~\cite{banerjee-jost:08}).
\item Second, there is an explicit relation between the eigenvalues of
  the standard (discrete) Laplacian and the Kirchhoff Laplacian of the
  corresponding compact equilateral metric graph (see
  e.g.~\cite{von-below:01,lledo-post:08b} and references therein;
  in~\cite{lledo-post:08b} also the relation between Laplacians with
  Dirichlet conditions on subsets of vertices are described).
\item Third, the construction of isospectral graphs for normalised
  weights has been studied less than for other operators like the
  combinatorial Laplacian or the adjacency matrix (cf.~\cite{butler-grout:11}).
\item Finally, it seems that there is a relatively small number of isospectral
 graphs with normalised weights compared to other cases.
 Table~\ref{tab:1} (taken from~\cite{butler-grout:11} and based on
 results for the combinatorial operators by~\cite{haemers-spence:04})
 shows the number of simple graphs of order at most $9$ with an
 isospectral mate for different types of Laplacians: for the
 combinatorial Laplacian, the signless (combinatorial) Laplacian and
 for the \emph{standard} (sometimes also called \emph{normalised} or
 \emph{geometric}) Laplacian.
\end{itemize}

Specific construction techniques of isospectral discrete graphs are
presented in~\cite{butler-grout:11} where the gluing of certain blocks
of graphs including some bipartite blocks resulted in pairs of
isospectral graphs for the standard
Laplacian. In~\cite{butler-heysse:16} the authors present a technique
based on toggling of three basic blocks to produce isospectral graphs
with different number of edges, see also the construction technique
presented in~\cite{butler:15} using twin graphs and a scaling
technique on general vertex and edge weights. Cavers shows
in~\cite{cavers:10} that a variation of the Godsil-McKay switching
also preserves the spectrum of the standard Laplacian.
In~\cite{dsg:17} the authors give necessary conditions for a graph to
be isospectral with a subgraph with one edge deleted; moreover, they
give classes of graphs where this cannot happen.  In~\cite{hu-li:22}
the authors construct isospectral graphs for the normalised Laplacian
by partitioning the vertices into several blocks and allowing edges
only between consecutive blocks in certain proportions.
\begin{table}
  \centering
  \begin{tabular}{c|c|c|c|c}
    \rule[-1ex]{0pt}{2.5ex}  No.\ vertices &  No.\ graphs & Combinatorial
       & Signless Comb.\ & Standard  \\
    \hline
    \rule[-1ex]{0pt}{2.5ex}  1&  1& 0 & 0 &0    \\
    \rule[-1ex]{0pt}{2.5ex}  2&  2&  0& 0 &0    \\
    \rule[-1ex]{0pt}{2.5ex}  3&  4&  0&  0& 0   \\
    \rule[-1ex]{0pt}{2.5ex}  4&  11&  0& 2 &2    \\
    \rule[-1ex]{0pt}{2.5ex}  5&  34& 0 & 4 & 4   \\
    \rule[-1ex]{0pt}{2.5ex}  6&  156& 4& 16&  14  \\
    \rule[-1ex]{0pt}{2.5ex}  7&  1044& 130 & 102&  52 \\
    \rule[-1ex]{0pt}{2.5ex}  8&  12346& 1767 & 1201 & 201  \\
    \rule[-1ex]{0pt}{2.5ex}  9&  274668& 42595  & 19001 & 1092  \\
  \end{tabular}
  \caption{Number of simple graphs (not necessarily connected) having
    a non-isomorphic graph with isospectral Laplacian for different types of Laplacians.
    Note that the table contains also non-connected graphs. \label{tab:1}}
\end{table}
Isospectrality of discrete graph Laplacians may be extended in several
directions. Lim studies the Hodge Laplacian in~\cite[Example~4.1 and
Section~5.3]{lim:20}, a higher order generalisation of the graph
Laplacian mentioned before, and considers isospectral graphs for this
operator.

In this article we propose to extend the question of isospectrality to
the discrete magnetic Laplacian with normalised weights on discrete
graphs. Recall that the magnetic field is described in this context in
terms of a magnetic potential function $\alpha$ on the oriented edges
(arcs) with values in $\RmodZ$ and we call these graphs simply
\emph{magnetic graphs} (see \cite{shubin:94,sunada:94c} in the case of magnetic Laplacians
on the lattice graph $\Z^2$).
Isospectrality of magnetic graphs refers to this operator. One
important advantage of the use of the magnetic potential is that it
interpolates between different types of Laplacians including the
usual normalised Laplacian (if $\alpha=0$), the signless Laplacian (if
$\alpha_e=\pi$, $e\in A$) or the signed Laplacian (if
$\alpha_e\in\{0,\pi\}$). Therefore, the graphs constructed here will
be isospectral for these Laplacians and, also, for the Kirchhoff
Laplacian of the corresponding equilateral graphs, see
\Subsec{met.graphs}.  We refer to~\cite{fclp:18,fclp:22a} for
additional motivation and the use of the magnetic potential in
periodic graphs and for the generation of spectral gaps.  Moreover, we
refer to~\cite{fclp:22b} for the use of magnetic potentials to give
spectral obstructions for the graph having Hamiltonian cycles or
perfect matchings.  In~\cite{fclp:pre22a} we present a perturbative
method using only spectral information (and no eigenfunctions) to
construct isospectral graphs for the normalised Laplacian (for
simplicity without magnetic field).

\begin{figure}[h]
		\begin{tikzpicture}[auto,vertex/.style={circle,draw=black!100,%
				fill=black!100, 
				inner sep=0pt,minimum size=1mm},scale=1]
			\foreach \x in {0}
			{\node(D) at (0,-1.00) [vertex,fill=white,label=below:${\G=\Gr F_1}$] {};
			 \node(C\x) at (\x,1) [vertex,fill=black,] {};
				\node (B-1\x) at ({\x-.5},2) [vertex,label=left:] {};
				\node (B1\x) at ({\x+.5},2) [vertex,label=right:] {};
				\node (A) at (0,3) [vertex,fill=black] {};

				\path [-](D) edge node[right] {} (C\x);
				\path [-](C\x) edge node[right] {} (B-1\x);
				\path [-](C\x) edge node[right] {} (B1\x);
				\path [-](B-1\x) edge node[right] {} (B1\x);
				\path [-](B-1\x) edge node[right] {} (A);
				\path [-](B1\x) edge node[right] {} (A);
				\path [-](C\x) edge node[right] {} (A);
			}
		\end{tikzpicture}
		\quad 
		\begin{tikzpicture}[auto,vertex/.style={circle,draw=black!100,%
				fill=black!100, 
				inner sep=0pt,minimum size=1mm},scale=1]
			\foreach \x in {1,-1}
			{\node(D) at (0,-1.00) [vertex,fill=white,label=below:$\Gr F_2$] {};
				\node(C\x) at (\x,1) [vertex,fill=black,] {};
				\node (B-1\x) at ({\x-.25},2) [vertex,label=left:] {};
				\node (B1\x) at ({\x+.25},2) [vertex,label=right:] {};
				\node (A) at (0,3) %
                                [vertex,fill=black] {};
				\path [-](D) edge node[right] {} (C\x);
				\path [-](C\x) edge node[right] {} (B-1\x);
				\path [-](C\x) edge node[right] {} (B1\x);
				\path [-](B-1\x) edge node[right] {} (B1\x);
				\path [-](B-1\x) edge node[right] {} (A);
				\path [-](B1\x) edge node[right] {} (A);
				\path [-](C\x) edge node[right] {} (A);
			}
		\end{tikzpicture}
		\quad 
	\begin{tikzpicture}[auto,vertex/.style={circle,draw=black!100,%
			fill=black!100, 
			inner sep=0pt,minimum size=1mm},scale=1]
		\foreach \x in {1,0,-1}
		{\node(D) at (0,-1.00) [vertex,fill=white,label=below:$\Gr F_3$] {};
			\node(C\x) at (\x,1) [vertex,fill=black,] {};
			\node (B-1\x) at ({\x-.25},2) [vertex,label=left:] {};
			\node (B1\x) at ({\x+.25},2) [vertex,label=right:] {};
			\node (A) at (0,3) [vertex,fill=black] {};

			\path [-](D) edge node[right] {} (C\x);
			\path [-](C\x) edge node[right] {} (B-1\x);
			\path [-](C\x) edge node[right] {} (B1\x);
			\path [-](B-1\x) edge node[right] {} (B1\x);
			\path [-](B-1\x) edge node[right] {} (A);
			\path [-](B1\x) edge node[right] {} (A);
			\path [-](C\x) edge node[right] {} (A);

		}
              \end{tikzpicture}
              \quad

  \begin{tikzpicture}[auto,vertex/.style={circle,draw=black!100,%
        fill=black!100, 
        inner sep=0pt,minimum size=1mm},scale=1]
      \foreach \x in {-2}
      {\node(D) at (0,-1.00) [vertex,fill=white] {};
        \node(C\x) at (\x,1) [vertex,fill=black,] {};
        \node (B-1\x) at ({\x-.25},2) [vertex,label=left:] {};
        \node (B1\x) at ({\x+.25},2) [vertex,label=right:] {};
        \node (A) at (-2,3) [vertex,fill=black] {};
        \path [-](D) edge node[right] {} (C\x);
        \path [-](C\x) edge node[right] {} (B-1\x);
        \path [-](C\x) edge node[right] {} (B1\x);
        \path [-](B-1\x) edge node[right] {} (B1\x);
        \path [-](B-1\x) edge node[right] {} (A);
        \path [-](B1\x) edge node[right] {} (A);
        \path [-](C\x) edge node[right] {} (A);
      }
      \foreach \x in {0,1,2}
      {\node(D) at (0,-1.00) [vertex,fill=white,label=below:$\Gr F_{A,V_1}$] {};
        \node(C\x) at (\x,1) [vertex,fill=black] {};
        \node (B-1\x) at ({\x-.25},2) [vertex,label=left:] {};
        \node (B1\x) at ({\x+.25},2) [vertex,label=right:] {};
        \node (A) at (1,3) [vertex,fill=black] {};

        \path [-](D) edge node[right] {} (C\x);
        \path [-](C\x) edge node[right] {} (B-1\x);
        \path [-](C\x) edge node[right] {} (B1\x);
        \path [-](B-1\x) edge node[right] {} (B1\x);
        \path [-](B-1\x) edge node[right] {} (A);
        \path [-](B1\x) edge node[right] {} (A);
        \path [-](C\x) edge node[right] {} (A);
      }
    \end{tikzpicture}
    \begin{tikzpicture}[auto,vertex/.style={circle,draw=black!100,%
        fill=black!100, 
        inner sep=0pt,minimum size=1mm},scale=1]
      \foreach \x in {-2,-1}
      {\node(D) at (0,-1.00) [vertex,fill=white] {};
        \node(C\x) at (\x,1) [vertex,fill=black,] {};
        \node (B-1\x) at ({\x-.25},2) [vertex,label=left:] {};
        \node (B1\x) at ({\x+.25},2) [vertex,label=right:] {};
        \node (A) at (-1.5,3) [vertex,fill=black] {};

        \path [-](D) edge node[right] {} (C\x);
        \path [-](C\x) edge node[right] {} (B-1\x);
        \path [-](C\x) edge node[right] {} (B1\x);
        \path [-](B-1\x) edge node[right] {} (B1\x);
        \path [-](B-1\x) edge node[right] {} (A);
        \path [-](B1\x) edge node[right] {} (A);
        \path [-](C\x) edge node[right] {} (A);
      }
      \foreach \x in {1,2}
      {\node(D) at (0,-1.00) [vertex,fill=white,label=below:${\Gr F}_{B,V_1}$] {};
        \node(C\x) at (\x,1) [vertex,fill=black,] {};
        \node (B-1\x) at ({\x-.25},2) [vertex,label=left:] {};
        \node (B1\x) at ({\x+.25},2) [vertex,label=right:] {};
        \node (A) at (1.5,3) [vertex,fill=black] {};

        \path [-](D) edge node[right] {} (C\x);
        \path [-](C\x) edge node[right] {} (B-1\x);
        \path [-](C\x) edge node[right] {} (B1\x);
        \path [-](B-1\x) edge node[right] {} (B1\x);
        \path [-](B-1\x) edge node[right] {} (A);
        \path [-](B1\x) edge node[right] {} (A);
        \path [-](C\x) edge node[right] {} (A);
      }
    \end{tikzpicture}
    \caption{The construction of pairs of isospectral graphs:
      \newline %
      \emph{Top row from left to right:} the building block $\wG$ (a
      so-called \emph{kite graph}), the frame members $\wGr F_1=\wG$, $\wGr F_2$ and
      $\wGr F_3$.  \newline%
      \emph{Bottom row:} The two graphs $\wGr F_{A,V_1}$
      and $\wGr F_{B,V_1}$ are isospectral but not isomorphic.  Note
      that this is the smallest possible choice of non-trivial
      partitions, namely the two different $2$-partitions of
      $A=\lMset 1,3\rMset$ (left) and $B=\lMset 2,2\rMset$ (right)
      ($4=1+3=2+2$).  Note that the kite graph can also carry a
      magnetic potential; here given by three parameters, see
      \Ex{kite}.
    \label{fig:kite.isospec}
  }
\end{figure}
We illustrate first our general construction procedure through a
motivating family of examples presented in \Sec{motivating-ex}.  In
\Fig{kite.isospec} we show, and briefly explain, a concrete example of
two isospectral graphs constructed from a kite graph as a building
block $\wG$ and frame graphs $\wGr F_1=\wG$, $\wGr F_2$ and
$\wGr F_3$. In this example, the frame graphs are then glued together
along certain vertices and according to the two
partitions\footnote{For the multiset notation we refer to
  \App{appendix}.} $A=\lMset 1,3\rMset$ and $B=\lMset 2,2\rMset$ of
the natural number $r=4$ with length $s=2$, i.e., gluing
$\wGr F_1=\wG$ with $\wGr F_3$ and $\wGr F_2$ with itself to produce
the isospectral mates $\wGr F_A$ and $\wGr F_B$. Making contact with
Kac's original question, our construction shows that for any $r\in\N$
one can hear the length of the partition $s$ but not the particular
decomposition summing up to $r$. In addition, our graphs will be
isospectral for the the standard Laplacian, the signless standard
Laplacian, the signature Laplacian corresponding to arbitrary
signatures of the building block and, more generally, arbitrary
magnetic potentials on the building block.  Our construction also
determines the spectrum of the isospectral graphs in terms of the
spectrum of $\wG$ and the spectra of the Laplacian on $\wG$ with
Dirichlet conditions of the selected sets of vertices $V_0$ and $V_1$.
Nevertheless, we emphasise the geometrical construction leading to the
families of mutually isospectral graphs rather than the determination
of the corresponding spectra. Our isospectral graphs will have the
same number of edges.

Let us finally mention that all our isospectral, but not isomorphic
discrete graphs with standard weight (and no magnetic field) directly
lead to equilateral metric graphs which are isospectral, but not
isomorphic (see \Cor{main}).

\subsection*{Structure of the article}
The following section introduces the basic notions and results on
multidigraphs and discrete magnetic Laplacians needed in this
article. We also introduce here the two basic graph operations needed
in our construction: vertex \emph{contraction} and vertex
\emph{virtualisation} leading to the corresponding Dirichlet
Laplacian. In \Sec{motivating-ex} we present our construction first in
a family of examples with a concrete building block graph and
illustrate some of the key ideas of the general proof in these
prototypes. The following section focuses on the notion of a frame
obtained from a generic building block (magnetic graph) with some
distinguished subset of vertices $V_0$.  We also calculate the
spectrum of a frame member and give a group theoretical explanation of
the different multiplicities that appear in the spectrum of the
isospectral graphs.  In \Sec{frames-and-partitions} the general
construction is presented and the main results on isospectrality
(\Thm{main} and \Cor{main}) are proved.  In the next section we
present many families of isospectral graphs exploiting systematically
the combinatorial freedom in our construction procedure.  In
\App{appendix} we recall basic facts on partitions and multisets that
will be used throughout the article.

\subsection*{Acknowledgements}
It is a pleasure to thank an anonymous referee for a careful reading
of the manuscript and many useful suggestions and remarks on the first
version of the manuscript.


%
\section{Magnetic discrete graphs and their Laplacians}
\label{sec:graphs}
%

In this section, we introduce briefly the discrete structures and
operations needed for the construction for families of isospectral
graphs. We will consider discrete locally finite graphs having
normalised weights on vertices and edges. We will also consider a
magnetic potential on the graph in terms of an $\RmodZ$-valued
function $\alpha$ on the edges. Finally we will specify the discrete normalised
magnetic Laplacian as a second order discrete analogue of the usual
Laplace operator on a Riemannian manifold with magnetic
potential.

\subsection{Discrete graphs}
\label{subsec:disc.graphs}
In this subsection, we fix the notation for discrete graphs following
the notation of Sunada (cf.~\cite[Chapter~3]{sunada:13}), see also our
previous articles~\cite{fclp:18,fclp:22a}, where certain aspects are
explained in more detail.

A \emph{(discrete multidi-)graph} $\G=(V,E,\partial)$ is given by two
disjoint and (in this article) finite sets $V$ and $E$ (the set of
\emph{vertices} and of \emph{(directed) edges} or \emph{arcs})
together with an \emph{incidence map} $\map \partial E{V \times V}$,
$\bd e=(\bd_-e,\bd_+e)$, where $\bd_-e$ is the \emph{initial}, and
$\bd_+e$ the \emph{terminal} vertex of $e \in E$.  The \emph{order} of
$\G$ is $\card \G:=\card {V(\G)}$.

An \emph{inversion map} $\map{\ol \cdot} E E$ is a map fulfilling
$\ol{\ol e} =e$, $\ol e \ne e$, $\bd_+ (\ol e)=\bd_-e$ and
$\bd_-(\ol e)=\bd_+e$ for all $e \in E$.  The inversion map
$\ol \cdot$ introduces a $\Z_2$-action on $E$, the orbits
$\{e,\ol e\}$ are called \emph{undirected} edges; in the figures, we
only plot the undirected edges; each comes with both directions $e$
and $\ol e$.

A \emph{loop} is an edge $e$ such that $\bd_+e=\bd_-e$ (note that a
loop also has two different directed edges $e$ and $\ol e$).  Two
edges $e_1$ and $e_2$ are called \emph{parallel} if $e_1\neq e_2$ with
$\bd_+e_1=\bd_+e_2$ and $\bd_- e_1 =\bd_- e_2$.  A graph is
\emph{simple} if it has no loops or parallel edges.

For a vertex $v\in V$, the set of directed edges with
origin $v$ is denoted as $E_v(\G)$ or simply $E_v$, i.e.
\begin{equation*}
  E_v(\G):=\set{e\in E} {\bd_- e=v}.
\end{equation*}
The \emph{degree} of a vertex $v$ is the cardinality of $E_v$, denoted
by $\deg v:= \card{E_v}$.  Observe that loops are counted twice in
each set $E_v$ (as $e \ne \ol e$).  We exclude \emph{isolated}
vertices and vertices with infinite degree, i.e., we assume that
$0<\deg_G v<\infty$ for all $v \in V$.  A vertex of degree $1$ is
called \emph{pendant}.  If the dependence of the graph $\G$ is
important, we write $V=V(\G)$, $E=E(\G)$ or $\deg_\G$ etc.  Let $\G$
be a graph with $n$ vertices labelled $v_1, v_2, \dots, v_n$.  The
multiset
\begin{equation}
  \label{eq:deg.set}
  \deg \G := \lMset \deg v_1, \deg v_2,\dots, \deg v_n \rMset
\end{equation}
is called the \emph{degree multiset} of $\G$ (see \App{multisets}
for more details on multisets).  A graph $\G$ is \emph{connected} if
for any two vertices $u,v$ there exists a path from $u$ to $v$, i.e.,
there is a sequence of distinct vertices $(u,v_1,\dots,v_{n-1},v)$ and
of directed edges $(e_1,e_2,\dots,e_n)$ with
$v_{i}=\bd_+ e_i=\bd_-e_{i+1}$ for $1\leq i\leq n-1$ and $\bd_-e_1=u$
and $\bd_+ e_n=v$.

A \emph{graph homomorphism} $\map \phi \G {\G'}$ between the graphs
$\G=(V(\G),E(\G))$ and $\G'=(V(\G'), E(\G'))$ is given by two maps
$\map \phi {V(\G)} {V(\G')}$ and $\map \phi {E(\G)} {E(\G')}$
respecting the incidence and inversion maps, i.e., they fulfil
$\bd_\pm(\phi(e))=\phi(\bd_\pm e)$ and
$\ol {\phi(e)}=\phi(\ol e)$ for all $e\in E(\G)$.  The map $\phi$ is a
\emph{graph isomorphism} if $\phi$ is a bijection on the vertex and
edge sets; in this case, $\G$ and $\G'$ are called \emph{isomorphic
  (discrete) graphs}.  Note that the degree
multiset~\eqref{eq:deg.set} is a graph invariant, i.e., if the degree
sequence or the degree multiset of two graphs differ, then the graphs
cannot be isomorphic.

\subsection{Graphs with normalised weights and magnetic potentials}
\label{subsec:w-graphs}
We briefly define here discrete graphs with normalised weights.  As
before, details can be found in~\cite{fclp:18,fclp:22a} and references
therein.  An \emph{(edge) weight} is a function
$\map w E {(0,\infty)}$ such that $w_{\ol e}=w_e$ for all $e\in E$,
the associated \emph{weighted degree} is defined by
\begin{equation}
  \label{eq:weighted.deg}
  \deg_\G^w v=\sum_{e \in E_v} w_e.
\end{equation}
The edge weight $w$ and vertex weight $\deg_\G^w$ are called
\emph{normalised} in the terminology
of~\cite[Section~2.2]{fclp:18,fclp:22a}.  A particular case is $w_e=1$
for all $e \in E$ and hence $\deg^w_\G v=\deg_\G v$ is the usual
degree of $v$; this weight is called \emph{standard}.\footnote{Some
  authors use the name \emph{normalised} for the corresponding
  Laplacian (see below) for we call \emph{standard} here (see,
  e.g.~\cite{chung:97,butler-grout:11}).}  We extend $w$ and
$\deg^w_\G$ naturally as discrete measures on $E(\G)$ and $V(\G)$.

Next, we define magnetic potentials on discrete graphs
(see~\cite{fclp:18,fclp:22a} and references therein for details).  A
\emph{magnetic potential} is a function $\map \alpha E \RmodZ$ such
that $\alpha_{\ol e}=-\alpha_e$ for all $e \in E$.  We call
$\wG=(\G,\alpha,w)$ a \emph{magnetic graph} (with normalised weights).
For the standard weight and no magnetic potential (i.e., $w_e=1$ and
$\alpha_e=0$ for all $e \in E$) we simply write $\G$ for the
corresponding magnetic graph $\wGr G=(\G,0,1)$.

Two magnetic graphs $\wG=(\G,\alpha,w)$ and $\wG'=(\G',\alpha',w')$
are said to be \emph{isomorphic} (as magnetic graphs) if there exists
a graph isomorphism $\map \phi \G{\G'}$ such that $w_e=w'_{\phi(e)}$
and $\alpha_e=\alpha'_{\phi(e)}$ for all $e\in E(\G)$.

\subsection{Contraction of vertices}
\label{subsec:vx.contr}

An important operation we need in this article is the
\emph{contraction of vertices} (also called \emph{gluing} or
\emph{merging} of vertices in the literature).  We introduce the
shrinking number $s$ that quantifies the reduction of the number of
vertices in the quotient graph.
\begin{definition}[Contracting vertices and shrinking number]
  \label{def:contraction}
  Let $\G=(V,E)$ be a graph and $\sim$ an equivalence relation on the
  vertex set $V$.  The graph $\quotient \G =(\quotient V,E)$ obtained
  from $\G$ by \emph{contracting the vertices} (with respect to
  $\sim$) is given by $\quotient V$ and incidence function
  $\bd(e)=([\bd_-(e)],[\bd_+(e)])$.  The \emph{shrinking number} $s$
  of the equivalence relation $\sim$ on $V(\G)$ is defined by
  $s:=\card{\G}-\card{\quotient \G}=\card V-\card{\quotient V}$.
\end{definition}

Note that if we contract two \emph{adjacent} vertices $v_1$ and $v_2$,
all edges joining $v_1$ and $v_2$ become \emph{loops} in
$\G/\mathord\sim$ (in contrast to some conventions in combinatorics
where only simple graphs are allowed).

Given a discrete graph $\G=(V,E)$ with (edge) weight $w$, we choose
$w$ also as weight on the quotient graph
$\quotient \G=(\quotient V,E)$ (denoted by the same symbol $w$).  For
the quotient, we have
\begin{equation*}
  E_{[v]}(\quotient \G)
  = \bigdcup_{v' \in [v]} E_{v'}(\G),
\end{equation*}
where $\dcup$ means that the union is \emph{disjoint}.  Hence the
weighted degree is
\begin{equation}
  \label{eq:weight.on.quot}
  \deg_{\quotient G}^w [v]
  = w(E_{[v]}(\quotient \G))
  = \sum_{e \in E_{[v]}(\quotient \G)} w_e
  =\sum_{v' \in [v]} \sum_{e \in E_v} w_e
  =\sum_{v' \in [v]} \deg_G^w v'.
\end{equation}
It follows that the quotient map $\map \pi \G {\quotient \G}$ given by
$v \mapsto [v]$ is a measure preserving map, since the edge weights
are the same (the edges are actually not changed) and since
\begin{align}
  \nonumber
  (\pi_* \deg_\G^w)(\wt V)
  = \deg_\G^w(\pi^{-1}(\wt V))
  = \sum_{v' \in \pi^{-1}(\wt V)} \deg_\G^w v'
  &=\sum_{[v] \in \wt V} \sum_{v' \in [v]} \deg_G^w(v')\\
  \label{eq:quot.meas.pres}
  &=\sum_{[v] \in \wt V} \deg_{\quotient \G}^w([v])
  =\deg_{\quotient \G}^w(\wt V)
\end{align}
for any $\wt V \subset V(\quotient \G)$.
Similarly, a magnetic potential $\alpha$ on $\G$ will be lifted to $\quotient \G$.

\subsection{Normalised magnetic Laplacians}
\label{subsec:mw.lapl}

For a graph $(\G,w)$, we associate the following natural Hilbert space
\begin{align*}
  \lsqr{V,\deg^w}
  &:=\Bigset{\map f V \C}
    {\normsqr[\lsqr{V,\deg^w}] f = \sum_{v \in V} \abssqr{f(v)} \deg^w v},
\end{align*}
as our graphs are finite, $\lsqr{V,\deg^w}$ is a $\card \G$-dimensional space.

\begin{definition}[discrete (normalised) magnetic Laplacian and its spectrum]
  \label{def:DML}
  Let $\wG=(\G,\alpha,w)$ be a magnetic graph with normalised
  weights. The \emph{(normalised) magnetic Laplacian} is the operator
  $\map{\lapl \wG}{\lsqr{V,\deg^w}}{\lsqr{V,\deg^w}}$ defined by
  \begin{equation}
    \label{eq:DML}
    \bigl(\lapl \wG f \bigr)(v)
    = \frac1{\deg^w v} \sum_{e\in E_v(\G)}
       w_e \bigl(f(v)-\e^{\im \alpha_e}f(\bd_+e)\bigr)
    = f(v)-\frac1{\deg^w v } \sum_{e\in E_v(\G)}
       w_e \e^{\im \alpha_e}f(\bd_+e)\;, v\in V,
  \end{equation}
  where the sum in Eq.~\eqref{eq:DML} is taken over all edges starting
  at $v$ and loops are counted twice; $\deg^w v=\sum_{e \in E_v}w_e$
  is the weighted degree (see~\eqref{eq:weighted.deg}).  If $\wG$ has order
  $n$, then we write the spectrum of the corresponding Laplacian
  $\lapl \wG$ as the multiset (see \App{multisets} below)
  \begin{equation*}
    \spec \wG
    = \bigl\lMset \lambda_1(\wG), \lambda_2(\wG), \dots ,\lambda_n(\wG)
    \bigr\rMset  \;,
  \end{equation*}
  where the eigenvalues $\lambda_1(\wG), \dots, \lambda_n(\wG)$ are
  written in ascending order and repeated according to their
  multiplicities.
\end{definition}
For a definition using a discrete (twisted) exterior derivative, we
refer to~\cite[Definition~2.3]{fclp:18} or~\cite[Definition~3.1]{fclp:22a}.
\begin{remark}[matrix representation of the Laplacian]
  \label{rem:matrix.rep}
  Note that the matrix representation of $\lapl \wG$ with respect to
  the orthonormal basis $(\delta_v)_{v \in V}$ of $\lsqr {V,\deg^w}$
  ($\delta_v(v')=1/\sqrt {\deg^w(v)}$ if $v=v'$ and $\delta_v(v')=0$
  otherwise) is
  \begin{equation}
    \label{eq:lapl.matrix}
    (\lapl \wG)_{v,v'}
    \begin{cases}
      \displaystyle
      1-\sum_{e \in E, \bd_-e=\bd_+e=v} w_e\e^{\im \alpha_e}/\deg^w(v), & v=v',\\[1ex]
      \displaystyle
      - \sum_{e \in E, \bd_-e=v, \bd_+e=v'} w_e \e^{\im \alpha_e}/\sqrt{\deg^w(v)\deg^w(v')},
      & v \ne v'.
    \end{cases}
  \end{equation}
  If the graph is simple and has no magnetic potential, we have
  \begin{equation*}
    (\lapl \wG)_{v,v'}
    \begin{cases}
      1,  & v=v',\\
      - 1/\sqrt{\deg^w(v) \deg^w(v')}, & \text{$v$ adjacent with $v'$,}\\
      0, & \text{otherwise.}
    \end{cases}
  \end{equation*}
  For a graph with loops and standard weight $w_e=1$, we have
  $(\lapl \wG)_{v,v}=1-2\ell(v)/\deg(v)$ if there are $2\ell(v)$ loops
  at a vertex $v$ (note that we distinguish between $e$ and $\bar e$),
  so a loop counts twice.
\end{remark}
As an example the (non-simple) magnetic graph $\wG^\theta$ in
\Fig{motivating-ex} with three vertices has the matrix representation
\begin{equation}
  \label{eq:matrix.ex}
  \lapl \wG \cong
  \begin{pmatrix}
    1 & -\frac{1+\e^{\im \theta}}{\sqrt 6} & 0\\
    -\frac{1+\e^{-\im \theta}}{\sqrt 6} & 1 & -\frac1{\sqrt 3}\\
    0 & -\frac1{\sqrt 3} & 1
  \end{pmatrix}.
\end{equation}

\begin{definition}[isospectral magnetic graphs]
  \label{def:isospec}
  Let $\wG$ and $\wt \wG$ be two finite magnetic graphs.  We say that
  $\wG$ and $\wt \wG$ are \emph{isospectral} (or \emph{cospectral)} if
  their spectra agree as multiset, i.e., if all eigenvalues agree
  including their multiplicities.
\end{definition}

\begin{remark}[multiple edges versus weighted graphs without multiple edges]
  \label{rem:mult.ed.weighed.gr}
  A weighted graph $\G$ with multiple edges can be turned into a graph
  $\wt \G$ without multiple edges.  On the other hand, a weighted
  graph with integer (edge) weights can be turned into a graph with
  multiple edges and standard weights.  We discuss for simplicity only
  the case of graphs without loops and magnetic potential.

  It turns out that both graphs (with or without multiple edges) have
  the same Laplacian.  This leads us to the following definition: We
  say that two weighted graphs $\wG=(\G,0,w)$ and
  $\wt \wG=(\wt \G,0,\wt w)$ on the same vertex set
  $V(\wG)=V(\wt \wG)$ are \emph{isolaplacian} if
  $\deg^w_{\wG}=\deg^{\wt w}_{\wt \wG}$ and
  $\lapl \wG f= \lapl {\wt \wG} f$ for all
  $f \in \lsqr{V(\wG),\deg^w}$, or, equivalently, if the matrix
  representation of their Laplacians is the same.

  Clearly, isomorphic magnetic graphs are isolaplacian, and
  isolaplacian magnetic graphs are isospectral (as the corresponding
  matrices are the same), but not vice versa.
  \begin{enumerate}
  \item
    \label{mult.ed.weighed.gr.a}
    Assume that $\wG=(\G,0,w)$ is a weighted graph without magnetic
    potential.  If $\G$ has multiple edges, we define a weighted graph
    $\wt \G=(\wt \G,0,\wt w)$ as follows: Denote by
    \begin{equation}
      \label{eq:mult.ed}
      E(v_-,v_+):=\set{e \in E(\G)}{\bd_-e=v_-, \bd_+ e=v_+}.
    \end{equation}
    Then $\card{E(v_-,v_+)}$ denotes the number of edges starting at
    $v_-$ and terminating at $v_+$.   
    We obtain a graph $\wt \G$ without multiple edges by identifying
    all edges in $E(v_-,v_+)$ into a single edge $\wt e$ provided
    $\card{E(v_-,v_+)} \ge 2$ and keeping all other edges.  As
    incidence function we set
    $\map{\wt \bd}{E(\wt \G)}{V(\G)\times V(\G)}$,
    $\wt \bd(\wt e)=(\bd_- e, \bd_+ e)$ for $e \in E(v_-,v_+)$.  The
    weight function of $\wt \wG$ is defined as
    \begin{equation*}
      \wt w_{\wt e}=\sum_{e \in \wt e} w_e.
    \end{equation*}
    Then the graphs $\wG$ and $\wt \wG$ are isolaplacian.  We call
    $\wt \wG$ the \emph{underlying simple weighted graph}.  Note that
    the graphs $\wG$ and $\wt \wG$ are not isomorphic (as a multiple
    edge changes the Betti number).

  \item
    \label{mult.ed.weighed.gr.b}
    On the other hand, a weighted graph $\wG$ with weights
    $w_e \in \N$ can be turned into a graph with standard weights
    $\wG'$ by replacing an edge $e$ by $w_e$ multiple edges $e'$ with
    weight $w'_{e'}=1$.  Again, it is easy to see that the graphs
    $\wG$ and $\wG'$ are isolaplacian, but again not isomorphic (if at
    least one weight fulfils $w_e \ne 1$).
  \end{enumerate}
\end{remark}

\begin{remark}[signed graphs as a special case of magnetic graphs]
  \label{rem:signed.graphs}
  If the magnetic potential $\alpha$ has values only in $\{0,\pi\}$,
  then the magnetic potential is also called a \emph{signature} and
  $\wG$ is called a \emph{signed graph} (see e.g.~\cite{llpp:15} and
  references therein).  The corresponding Laplacians (see below) are
  called \emph{signed Laplacian}, including the so-called
  \emph{signless} Laplacian, where $\alpha_e=\pi$ for all $e \in
  E$. In this sense the magnetic potential can be interpreted as a
  parameter interpolating different Laplacians.
\end{remark}

The construction presented later shows isospectrality for magnetic
Laplacians including signed and signless Laplacians.

The next proposition collects some well-know properties of the
discrete magnetic Laplacian (for a proof, see
e.g.~\cite[Section~3]{fclp:18}):
\begin{proposition}[magnetic Laplacians and cohomologous magnetic
  potentials]
  \label{prp:dml}
  Let $\wG=(\G,\alpha,w)$ be a normalised magnetic graph and $\lapl \wG$
  its associated Laplacian.
  \begin{enumerate}
  \item
    \label{dml.a}
    $\lapl \wG$ is a non-negative and self-adjoint operator with
    spectrum contained in $[0,2]$.
  \item
    \label{dml.b}
    If $\alpha$ and $\alpha'$ are cohomologous ($\alpha \sim \alpha'$,
    i.e., there is $\map \xi V \RmodZ$ such that
    $\alpha'_e=\alpha_e+\xi(\bd_+e)-\xi(\bd_-e)$ for all
    $e \in E(\G)$), then $\lapl {(\G,\alpha,w)}$ and
    $\lapl {(\G,\alpha',w)}$ are unitarily equivalent and have the same
    spectrum, i.e., $(\G,\alpha,w)$ and $(\G,\alpha',w)$ are
    isospectral.
  \item
    \label{dml.c}
    In particular, if $\alpha\sim 0$ then $\lapl {(\G,\alpha,w)}$ is
    unitarily equivalent with the usual normalised Laplacian
    $\lapl {(\G,0,w)}$, i.e., $(\G,\alpha,w)$ and $(\G,0,w)$ are
    isospectral.

  \item
    \label{dml.d}
    If the underlying graph of $\wG$ is a tree, then $\alpha\sim 0$
    and $\lapl {(\G,\alpha,w)}$ and $\lapl {(\G,0,w)}$ are unitarily
    equivalent for any magnetic potential $\alpha$.
  \item
    \label{dml.e}
    Let $\alpha=\pi$ and $\G$ be connected.  Then $\alpha \sim 0$ if
    and only if $\G$ is bipartite.
  \end{enumerate}
\end{proposition}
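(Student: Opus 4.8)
The plan is to derive all five statements from a single computation of the sesquilinear form of $\lapl\wG$ together with the standard gauge (cohomology) argument. For (a) I would evaluate $\iprod{\lapl\wG f}{f}$ on $\lsqr{V,\deg^w}$ directly from \eqref{eq:DML}: writing the defining sum as a sum over all directed edges and then grouping each edge $e$ with its reversal $\ol e$ (using $w_{\ol e}=w_e$, $\alpha_{\ol e}=-\alpha_e$ and $\bd_\pm\ol e=\bd_\mp e$), the two cross terms recombine into a single modulus, giving the magnetic Dirichlet energy
\[
  \iprod{\lapl\wG f}{f}
  = \frac12 \sum_{e\in E} w_e \, \abssqr{f(\bd_- e)-\e^{\im\alpha_e}f(\bd_+ e)} .
\]
This identity is the heart of the proposition. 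Being real and non-negative, it shows at once that $\lapl\wG\ge 0$ and that the form is symmetric, equivalently that the matrix \eqref{eq:lapl.matrix} is Hermitian, so $\lapl\wG$ is self-adjoint. For the upper bound I would estimate each summand by $\abssqr{a-\e^{\im\alpha}b}\le 2(\abssqr a+\abssqr b)$ and then use the inversion symmetry together with $\sum_{e\in E_v}w_e=\deg^w v$ to identify $\sum_{e\in E}w_e\abssqr{f(\bd_- e)}=\sum_{e\in E}w_e\abssqr{f(\bd_+ e)}=\normsqr f$; this yields $\iprod{\lapl\wG f}{f}\le 2\normsqr f$ and hence $\spec\wG\subseteq[0,2]$.

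For (b) I would introduce the diagonal gauge unitary $U$ on $\lsqr{V,\deg^w}$ defined by $(Uf)(v)=\e^{-\im\xi(v)}f(v)$, which is unitary because $\abs{\e^{-\im\xi(v)}}=1$ leaves the weighted norm invariant. Inserting $\alpha'_e=\alpha_e+\xi(\bd_+ e)-\xi(\bd_- e)$ into \eqref{eq:DML} and comparing phases edgewise gives the intertwining relation $\lapl{(\G,\alpha',w)}\,U=U\,\lapl{(\G,\alpha,w)}$, so the two Laplacians are unitarily equivalent and therefore isospectral. Part (c) is just the special case $\alpha'=0$.

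For (d) and (e) the remaining question is when $\alpha$ is cohomologous to $0$, i.e.\ when $\alpha_e=\xi(\bd_+ e)-\xi(\bd_- e)$ admits a solution $\map\xi V\RmodZ$; part (c) then finishes the argument. On a tree (d) I would fix a root and let $\xi(v)$ be the signed sum of $\alpha$ along the unique root-to-$v$ path; uniqueness of paths makes $\xi$ well defined and yields $\alpha\sim 0$. For (e), with $\alpha_e=\pi$ for all $e$: if $\G$ is bipartite with parts $V_\pm$, then $\xi\equiv 0$ on $V_+$ and $\xi\equiv\pi$ on $V_-$ solves the equation, so $\alpha\sim 0$; conversely, summing $\xi(\bd_+ e)-\xi(\bd_- e)=\pi$ around any cycle of length $k$ telescopes to $0$ while also equalling $k\pi$ modulo $2\pi$, forcing $k$ to be even, so every cycle of the connected graph $\G$ is even and $\G$ is bipartite.

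I expect the only genuinely delicate point to be the form identity in (a); once it is established, (b)--(e) are essentially bookkeeping. The difficulty there is purely in the combinatorics of the edge sum: pairing each $e$ with $\ol e$ so that the weights and phases recombine correctly, avoiding double counting, and tracking the factor $\tfrac12$ and the sign convention $\alpha_{\ol e}=-\alpha_e$. As a sanity check I would verify the identity and all sign choices against the explicit $3\times 3$ matrix \eqref{eq:matrix.ex}.
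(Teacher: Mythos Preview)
Your proposal is correct and follows the standard route; in fact you give considerably more detail than the paper, which simply refers parts~(a)--(d) to~\cite[Section~3]{fclp:18} and only comments on~(e). For~(e) your argument for the implication $\alpha\sim 0 \Rightarrow \G$ bipartite differs slightly from the paper's: you sum the coboundary relation around an arbitrary cycle and use telescoping to force even length, whereas the paper observes that (after adding a constant) the gauge function $\xi$ must take values in $\{0,\pi\}$ and reads off the bipartition as $\xi^{-1}(\{0\})$ and $\xi^{-1}(\{\pi\})$. Both arguments are equally short and valid; yours invokes the odd-cycle characterisation of bipartiteness, while the paper's extracts the two colour classes directly from $\xi$.
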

\begin{proof}
  Only \itemref{dml.e} needs some comments: ``$\Rightarrow$'':
  $\alpha \sim 0$ implies the existence of $\map \xi V \RmodZ$ such
  that $\pi=\alpha_e=\xi(\bd_+e)-\xi(\bd_-e)$.  Adding a constant to
  $\xi$ if necessary, we can assume that $\xi(V)=\{0,\pi\}$. The
  bipartite partition of vertices can be chosen as $A=\xi^{-1}(\{0\})$
  and $B=\xi^{-1}(\{\pi\})$.

  ``$\Leftarrow$'': Let $V=A\dcup B$ be a disjoint decomposition of
  the bipartite graph (i.e., there are no edges between vertices in
  $A$ respectively\ $B$).  We define $\xi(v)=0$ if $v \in A$ and
  $\xi(v)=\pi \in \RmodZ$ if $v \in B$. It can then be shown that
  $\alpha_e=\pi=\xi(\bd_+e)-\xi(\bd_-e)$ for all $e \in E$.
\end{proof}

\subsection{Dirichlet magnetic Laplacians}
\label{subsec:dir.mw.lapl}
We define here the notion of a ``Dirichlet boundary condition'' on a
subset $V_0$ of vertices for a discrete graph. We call this process
\emph{vertex virtualisation}.  Concretely, let $\wG=(\G,\alpha,w)$ be
a magnetic graph and $V_0 \subset V(\G)$.  The virtualisation of the
vertices $V_0$ specifies a partial subgraph\footnote{For a partial
  subgraph, the incidence function
  $\map \bd {E(\G)} {V(\G) \times V(\G)}$ may not map all initial or
  final vertices into $V(\G) \setminus V_0$ (see Section~2
  of~\cite{fclp:18} for additional results and
  motivation). Mohar~\cite{mohar:92} also calls edges with one vertex
  not in the graph also ``free edges''.}
$\DirGr \wG {V_0}=(\DirGr \G {V_0},w^+,\alpha^+)$, where
$V(\DirGr \G {V_0})=V(\G)\setminus V_0$.  Moreover,
$E(\G^+)=E(\G)\setminus E(V_0)$, where $E(V_0)$ is the set edges
starting and ending in $V_0$.

The weight function and the magnetic potential are defined as the
restriction to $E(\G^+)$, namely $w^+:=w \restr{E(\G^+)}$ and
$\alpha^+:=\alpha \restr{E(\G^+)}$. For simplicity of notation we will
not distinguish the restriction by the label $(\cdot)^+$, and use $w$
resp.\ $\alpha$ instead of $w^+$ resp.\ $\alpha^+$.  We denote the
normalised magnetic partial subgraph with virtualised vertices $V_0$
obtained from $\wG$ hence by
$\DirGr \wG {V_0}=(\DirGr \G {V_0},\alpha,w)$.  The \emph{order} of
$\DirGr \wG {V_0}$ is defined as
$\card{\DirGr \wG {V_0}} = \card{\wG}-\card{V_0} =
\card{V(\wG)}-\card{V_0}$.  For an example as matrix representation
see the end of this subsection.

\begin{definition}[(normalised) magnetic Dirichlet Laplacian]
  \label{def:dir.lapl}
  Let $\wG=(\G,\alpha,w)$ be a magnetic graph and $V_0 \subset V(\G)$.
  The \emph{(normalised) magnetic Dirichlet Laplacian} with Dirichlet
  conditions on $V_0$ is defined as
  \begin{equation*}
    \laplDir \wG {V_0} := \iota_{V_0}^* \lapl \wG \iota_{V_0},
  \end{equation*}
  where $\map {\iota_{V_0}}{\lsqr{V \setminus V_0,\deg^w}}{\lsqr{V,\deg^w}}$ is
  the natural extension by $0$ on $V_0$. If $V_0=\{v_0\}$,
  we simply write $\laplDir \wG {v_0}$.  We denote the spectrum of
  $\laplDir \wG{V_0}$ by $\spec{\DirGr \wG {V_0}}$.
\end{definition}
Note that the $k$-th eigenvalue of the Dirichlet Laplacian (written in
ascending order) is larger than the corresponding $k$-th eigenvalue of
the Laplacian without restrictions, a consequence of the interlacing
theorem for eigenvalues.  This justifies the superscript $(\cdot)^+$
in the notation.  The relation between both Laplacians is as follows:
if both Laplacians are represented as a matrix, then the Dirichlet
Laplacian corresponds to a principal submatrix of the original
Laplacian.

As an example we consider again the graph $\wG^\theta$ in
\Fig{motivating-ex}, now with $V_0=\{v_1\}$ as Dirichlet vertex.  The
Dirichlet Laplacian $\laplDir {(\wG^\theta)} {V_0}$ is now the principal
submatrix obtained from the original matrix~\eqref{eq:matrix.ex} by
deleting the row and column corresponding to the Dirichlet vertex
$v_1$, here the first one, i.e.,
\begin{equation*}
  \laplDir {(\wG^\theta)} {v_0} \cong
  \begin{pmatrix}
    1 & -\frac1{\sqrt 3}\\
    -\frac1{\sqrt 3} & 1
  \end{pmatrix}.
\end{equation*}
Note that this graph is now independent of the magnetic potential
$\theta$ (see also \Sec{motivating-ex}).

\subsection{Metric graph Laplacians and isospectrality}
\label{subsec:met.graphs}
Given a discrete graph $\G$ we construct a corresponding equilateral
metric graph $\mG$ as the topological graph as follows: We choose an
orientation $\ori E$ of $E$, i.e., a partition
$E=\ori E \dcup \ol {\ori E}$.  We consider the space
$\bigdcup_{e \in \ori E}[0,1] \times \{e\}$, where each edge
$e \in \ori E$ is identified with the interval $[0,1] \times \{e\}$
and having length $1$. We define
$\map\psi {\bigdcup_{e \in \ori E}\{0,1\} \times \{e\}} {V(\G)}$ by
$\psi(0,e)=\bd_-e$ and $\psi(1,e)=\bd_+e$, mapping the endpoints of
the intervals to the corresponding vertices.

The metric graph $\mG$ is then the quotient space
$\bigdcup_{e \in \ori E}[0,1] \times \{e\}/\psi$, where the interval
endpoints are identified according to the graph $\G$.  In particular,
we consider $V$ as subset of $\mG$, and it makes sense to speak of a
\emph{continuous} function at a vertex in $\mG$.  We have a natural
coordinate $x_e \in [0,1]$ and a natural measure on $\mG$, the
Lebesgue measure on each interval.  For a function $\map f \mG \C$ we
define $f_e(x):= f(x,e)$ for $e \in \ori E$ and $f_e(x):=f(1-x,\ol e)$
for $e \in \ol{\ori E}$ and $x \in [0,1]$.  The corresponding natural
Hilbert space is
$\Lsqr \mG \cong \bigoplus_{e \in \ori E} \Lsqr{[0,1] \times \{e\}}$.

The \emph{Kirchhoff} (sometimes also called \emph{standard} or
\emph{Neumann}) Laplacian $\lapl \mG$ on $\mG$, acting on functions
$f=(f_e)_e \in \Lsqr \mG$ is $(\lapl \mG f)_e = -f''_e$ for functions
$f_e$ and their two weak derivatives in $\Lsqr{[0,1]}$ satisfying
\begin{equation*}
  \text{$f$ continuous at each vertex $v$} \quadtext{and}
  \sum_{e \in E_v} f_e'(0)=0
\end{equation*}
for all $v \in V$ (note that $f_{\ol e}'(0)=-f_e'(1)$ for
$e \in \ori E$).  For more details on metric graphs, we refer for
example to~\cite{berkolaiko-kuchment:13}.  For simplicity, we consider
graphs without magnetic potentials here only.

We use a beautiful relation between the spectra of the standard
Laplacian and the Kirchhoff Laplacian (see for
example~\cite[Theorem~1]{von-below:01},
or~\cite{lledo-post:08b,kurasov-muller:pre21,kuchment:08} and
references cited therein (recall that $\G$ refers here to the weighted
graph $(\G,0,\1)$ with standard weights):
\begin{proposition}
  \label{prp:met.gr.isospec}
  Let $\G$ resp.\ $\G'$ be two (finite) discrete graphs and $\mG$
  resp.\ $\mG'$ the corresponding equilateral metric graphs.  Then the
  following are equivalent:
  \begin{enumerate}
  \item $\G$ and $\G'$ are isospectral (with respect to the discrete
    standard Laplacian), and $\G$ and $\G'$ have the same number of
    edges;
  \item the metric Kirchhoff Laplacians on $\mG$ and $\mG'$ are isospectral.
  \end{enumerate}
\end{proposition}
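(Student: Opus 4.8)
The plan is to establish the classical spectral correspondence (due to von Below, and in the form cited from Lled\'o--Post) between the equilateral Kirchhoff Laplacian $\lapl \mG$ and the discrete standard Laplacian $\lapl \G$, phrased so that the Kirchhoff spectrum becomes an explicit function of the pair ``discrete standard spectrum together with the number of edges'', and then to check that this map can be inverted. First I would separate variables: on an edge identified with $[0,1]$ a Kirchhoff eigenfunction with eigenvalue $k^2$ reads $f_e(x)=a_e\cos(kx)+b_e\sin(kx)$. For $k\notin\pi\Z$ we have $\sin k\neq0$, so $a_e,b_e$ are uniquely fixed by the two endpoint values of $f$, and the Kirchhoff condition $\sum_{e\in E_v}f_e'(0)=0$ reduces, after dividing by $\deg v$, to $\frac1{\deg v}\sum_{e\in E_v}f(\bd_+e)=(\cos k)f(v)$, i.e.\ $\lapl \G F=(1-\cos k)F$ for the restriction $F:=f\restr V$. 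Since $\sin k\neq0$, the map $f\mapsto f\restr V$ is injective on this eigenspace, and the same formulae reconstruct $f$ from $F$, so it is a linear isomorphism onto the $(1-\cos k)$-eigenspace of $\lapl \G$ and the multiplicities agree. Letting $k$ run through $(0,\pi)$, the metric eigenvalues in $(0,\pi^2)$ are in multiplicity-preserving bijection with the discrete eigenvalues in $(0,2)$ via $\mu=1-\cos k$.

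The hard part is the exceptional set $k\in\pi\N$, where $\sin k=0$ and the reconstruction breaks down. Here I would split the $k^2$-eigenspace into eigenfunctions that do not vanish identically on $V$ --- which restrict to discrete eigenfunctions for $\mu=1-(-1)^n\in\{0,2\}$ and contribute the multiplicity of that eigenvalue --- and eigenfunctions vanishing at every vertex, a purely ``topological'' space supported on the cycles. Computing the dimension of the latter is the genuine obstacle: writing one linear relation per vertex among the edge coefficients $b_e$, with the sign $(-1)^n$ entering, a direct calculation gives the first Betti number $\tfrac12\card E-\card V+c$ when $n$ is even, and $\tfrac12\card E-\card V+c_{\mathrm b}$ when $n$ is odd, where $c$ is the number of connected components and $c_{\mathrm b}$ the number of bipartite ones. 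Thus the full multiplicity of $n^2\pi^2$ in $\spec \mG$ is governed solely by $\card V$, $\card E$, $c$, $c_{\mathrm b}$ and the multiplicities of $0$ and $2$ in $\spec \G$; these are exactly the characteristic-equation multiplicity formulas I would quote from the cited references rather than re-derive in full.

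With the correspondence in hand, both implications follow once one notices that every relevant invariant is visible on each side: $\card V$ is the number of discrete eigenvalues counted with multiplicity, $c$ is the multiplicity of $0$ and $c_{\mathrm b}$ the multiplicity of $2$ in $\spec \G$; while on the metric side $\tfrac12\card E$ is the total length $\mathcal L$, read off from Weyl's law $N(\Lambda)=\tfrac{\mathcal L}{\pi}\sqrt\Lambda+\Err(1)$, and $c$ is the multiplicity of $0$ in $\spec \mG$. For (a)$\Rightarrow$(b): equal discrete spectra fix $\card V$, $c$ and $c_{\mathrm b}$, and together with the assumed equality of edge numbers every generic and every exceptional metric multiplicity is forced to agree, so $\spec \mG=\spec{\mG'}$.

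For the converse (b)$\Rightarrow$(a): Weyl's law recovers $\tfrac12\card E$; the eigenvalues in $(0,\pi^2)$ recover, via $\mu=1-\cos k$, all discrete eigenvalues in $(0,2)$ with their total multiplicity, say $d$; and the multiplicity of $0$ in $\spec \mG$ gives $c$. Finally, using $\card V=c+d+c_{\mathrm b}$, the odd-$n$ formula shows the multiplicity of $\pi^2$ in $\spec \mG$ equals $\tfrac12\card E-c-d+c_{\mathrm b}$, which solves for $c_{\mathrm b}$ --- the sought multiplicity of $2$ in $\spec \G$. Assembling the eigenvalue $0$ with multiplicity $c$, the recovered part in $(0,2)$, and the eigenvalue $2$ with multiplicity $c_{\mathrm b}$ reconstructs $\spec \G$. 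Hence $\spec \G=\spec{\G'}$ and $\card E=\card{E'}$, completing the equivalence.
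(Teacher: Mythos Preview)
Your argument is correct and follows the same route as the paper: the von Below / Lled\'o--Post eigenspace correspondence for generic $k\notin\pi\Z$, together with the splitting at $k\in\pi\N$ into ``vertex-supported'' eigenfunctions (contributing the multiplicities of $0$ and $2$ in $\spec\G$) and ``topological'' eigenfunctions counted by the Betti number and the number of bipartite components. The one place you go beyond the paper is the direction (b)$\Rightarrow$(a): the paper simply asserts that the equivalence follows, without saying how the edge count is read off from the metric spectrum, whereas you make this explicit via Weyl's law and then back-solve for $c_{\mathrm b}$ from the multiplicity of $\pi^2$. That is a genuine gain in completeness rather than a different method.
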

\begin{proof}
  The spaces $\mGr N_{\mGr \lambda}=\ker(\lapl \mG -\mGr \lambda)$ and
  $N_\lambda=\ker(\lapl \G-\lambda)$ are isomorphic for
  $\lambda=1-\sqrt {\mGr \lambda} \in (0,2)$ and
  $\sqrt{\mGr \lambda} \notin \pi \N_0$ (see
  e.g.~\cite[Theorem~1]{von-below:01}
  or~\cite[Proposition~4.1]{lledo-post:08b}\footnote{In~\cite{lledo-post:08b},
    we assumed that the underlying graphs are connected.  The results
    there extend straightforward to the case of several connected
    components.} and references therein).  In particular, if $\G$ and
  $\G'$ are isospectral, then $\mG$ and $\mG'$ are isospectral up to
  the eigenvalues of the form $\mGr \lambda_n=n^2\pi^2$ and
  $n \in \N_0$.  For $n \in 2\N_0$, an eigenfunction constant on a
  connected component of the discrete graph (with value $1$, say)
  corresponds to the eigenfunction $\phi_e(x)=\cos(n\pi x)$ on each
  edge of the same connected component in $\mG$.  For $n \in 2\N_0+1$,
  the multiplicity of the correspondent discrete eigenvalue
  $\lambda=2$ counts the number of connected bipartite components (see
  e.g.~\cite[Proposition~2.3]{lledo-post:08b} and references therein), and
  each such eigenfunction (with values $\pm 1$) leads to an
  eigenfunction $\phi_e(x)=\cos(n \pi x)$ on each edge of the
  corresponding connected bipartite component.

  The remaining eigenfunctions of $\lapl \mG$ corresponding to
  eigenvalues $\mGr \lambda_n=n^2\pi^2$ are $0$ on all vertices; we
  called them \emph{topological} in~\cite[Definition~4.4]{lledo-post:08b}.
  They are entirely determined by the homology and the bipartiteness
  of the discrete graph (\cite[Lemma~5.1 and
  Proposition~5.2]{lledo-post:08b}).  As the homology of the graph is
  determined by the number of vertices \emph{and} edges, and as the
  number of bipartite connected component can be detected from the
  spectrum of $\lapl \G$, the stated equivalence follows.
\end{proof}

%
%
\section{A motivating family of examples}
\label{sec:motivating-ex}
%

Before going into a formal description of the construction of
isospectral magnetic graphs in \Sec{frames-and-partitions} we
illustrate the main idea of the construction with a class of
examples. This motivating family of examples of isospectral graphs
generalise those given by Butler and Grout
in~\cite[Example~2]{butler-grout:11} allowing multiple edges, a
general magnetic potential and dropping the bipartiteness condition.
The construction begins with the choice of a building block, out of
which we will assemble the frame members.  The building block $\G$ in
the example is given by a graph with three vertices $\{v_1,v_2,v_3\}$,
and three edges, two of which are multiple joining $v_1$ and $v_2$
(see \Fig{motivating-ex}).  For $\theta \in \RmodZ$ we add a magnetic
potential $\alpha_e^\theta=\theta$ on one of the two parallel edges
$e$ and $\alpha^\theta_e=0$ on the remaining two edges, and call the
resulting magnetic graph
$\wG^\theta=(\G,1,\alpha^\theta)$.\footnote{Note that if $\theta=\pi$,
  then the entry in the matrix of $\lapl {\wG^\theta}$ corresponding
  to the edges joining $v_1$ and $v_2$ (cf.~\eqref{eq:lapl.matrix}) is
  actually $0$, due to the two parallel edges, one of them with
  magnetic potential $\pi$, the other one $0$, hence
  $\e^0 + \e^{\im \pi}=0$.  In particular, $\wG^\pi$ is isolaplacian
  (see \Rem{mult.ed.weighed.gr}) with the disjoint union of a graph
  with one isolated vertex (having spectrum $1$ by definition), and
  $\DirGr \G {v_1}$; in accordance with the spectrum
  $\lMset 1-1/\sqrt 3,1,1+1/\sqrt 3\rMset$ given
  in~\eqref{eq:spec.block} for $\theta=\pi$.  This special case is
  hence a procedure to ``delete'' edges while keeping the original
  degrees.} Examples with simple graphs (i.e.\ without multiple edges
or loops) are given in \Sec{examples}

Let $V_0=\{v_1,v_3\}$ be the set the two non-adjacent vertices of
$\G$.  For each $a \in \N$ we contract $a$ copies of $\wG$ by merging
$v_1$ (respectively $v_3$) in each copy to one vertex. The edge weight
and the magnetic potential remains the same after identification on
each copy.  The resulting magnetic graph is called \emph{$a$-th frame
  member} $\wGr F_a^\theta:=\wGr F_a(\wG^\theta, V_0)$, as in
\Fig{motivating-ex} (see also the general \Def{frame}). We denote by
$(\wGr F_a)_{a\in \N}$ the frame obtained by the building block
$G$ and the choice of merging vertices $V_0$.
\begin{figure}[h]
  \centering
  {
    \begin{tikzpicture}[auto,
      vertex/.style={circle,draw=black!100,fill=black!100, 
        inner sep=0pt,minimum size=1mm},scale=1]
      \foreach \x in {0}
      { \node(C\x) at (0,1) [vertex,label=left:$v_3$] {};
        \node (B\x) at (\x*.5,0) [vertex,label=left:$v_2$] {};
        \path [-](C\x) edge node[right] {} (B\x);
        \node (A\x) at (0,-1) [vertex,draw=black,fill=none,label=left:$v_1$]
             {};
        \path [bend left=7](B\x) edge node[right] {} (A\x);
        \path [bend right=7](B\x) edge node[right] {} (A\x);
      }
      \draw[] (.3,.-1.5)node[left] {$\wG^\theta=\wGr F^\theta_1$};
    \end{tikzpicture}
    \begin{tikzpicture}[auto,
      vertex/.style={circle,draw=black!100,fill=black!100, 
        inner sep=0pt,minimum size=1mm},scale=1]
      \foreach \x in {-1,1}
      { \node(C\x) at (0,1) [vertex,label=below:] {};
        \node (B\x) at (\x*.5,0) [vertex,label=below:] {};
        \path [-](C\x) edge node[right] {} (B\x);
        \node (A\x) at (0,-1) [vertex,draw=black,fill=none,label=below:] {};
        \path [bend left=7](B\x) edge node[right] {} (A\x);
        \path [bend right=7](B\x) edge node[right] {} (A\x);
      }
      \draw[] (.3,.-1.5)node[left] {$\wGr F^\theta_2$};
    \end{tikzpicture}
    \begin{tikzpicture}[auto,
      vertex/.style={circle,draw=black!100,fill=black!100, 
        inner sep=0pt,minimum size=1mm},scale=1]
      \foreach \x in {-1,0,1}
      { \node(C\x) at (0,1) [vertex,label=below:] {};
        \node (B\x) at (\x*.5,0) [vertex,label=below:] {};
        \path [-](C\x) edge node[right] {} (B\x);
        \node (A\x) at (0,-1) [vertex,draw=black,fill=none,label=below:] {};
        \path [bend left=7](B\x) edge node[right] {} (A\x);
        \path [bend right=7](B\x) edge node[right] {} (A\x);
      }
      \draw[] (.3,.-1.5)node[left] {$\wGr F^\theta_3$};
    \end{tikzpicture}
    \begin{tikzpicture}[auto,
      vertex/.style={circle,draw=black!100,fill=black!100, 
        inner sep=0pt,minimum size=1mm},scale=1]
      \foreach \x in {-2,-1,1,2}
      { \node(C\x) at (0,1) [vertex,label=below:] {};
        \node (B\x) at (\x*.5,0) [vertex,label=below:] {};
        \path [-](C\x) edge node[right] {} (B\x);
        \node (A\x) at (0,-1) [vertex,draw=black,fill=none,label=below:] {};
        \path [bend left=7](B\x) edge node[right] {} (A\x);
        \path [bend right=7](B\x) edge node[right] {} (A\x);
      }
      \draw[] (.3,.-1.5)node[left] {$\wGr F^\theta_4$};
    \end{tikzpicture}
    \begin{tikzpicture}[auto,
      vertex/.style={circle,draw=black!100,fill=black!100, 
        inner sep=0pt,minimum size=1mm},scale=1]
      \foreach \x in {-2,-1,0,1,2}
      { \node(C\x) at (0,1) [vertex,label=below:] {};
        \node (B\x) at (\x*.5,0) [vertex,label=below:] {};
        \path [-](C\x) edge node[right] {} (B\x);
        \node (A\x) at (0,-1) [vertex,draw=black,fill=none,label=below:] {};
        \path [bend left=7](B\x) edge node[right] {} (A\x);
        \path [bend right=7](B\x) edge node[right] {} (A\x);
      }
      \draw[] (.3,.-1.5)node[left] {$\wGr F^\theta_5$};
    \end{tikzpicture}
    \begin{tikzpicture}[auto,
      vertex/.style={circle,draw=black!100,fill=black!100, 
        inner sep=0pt,minimum size=1mm},scale=1]
      \foreach \x in {-3,-2,-1,1,2,3}
      { \node(C\x) at (0,1) [vertex,label=below:] {};
        \node (B\x) at (\x*.5,0) [vertex,label=below:] {};
        \path [-](C\x) edge node[right] {} (B\x);
        \node (A\x) at (0,-1) [vertex,draw=black,fill=none,label=below:] {};
        \path [bend left=5](B\x) edge node[right] {} (A\x);
        \path [bend right=5](B\x) edge node[right] {} (A\x);
      }
      \draw[] (.3,.-1.5)node[left] {$\wGr F^\theta_6$};
      \node[circle,scale=0.3] (D1) at (2,0) [vertex,label=below:] {};
      \node[circle,scale=0.3] (D2) at (2.5,0) [vertex,label=below:] {};
      \node[circle,scale=0.3] (D3) at (3,0) [vertex,label=below:] {};
    \end{tikzpicture}
  } \caption{The frame $(\wGr F^\theta_a)_{a \in \N}$ given by the
    frame members $\wGr F^\theta_a$ for $a\in \N$.  Each graph
    $\wGr F^\theta_a$ has a so-called distinguished bottom vertex
    (outlined); it will be used in the next step for the contracted
    frame union.}
  \label{fig:motivating-ex}
\end{figure}
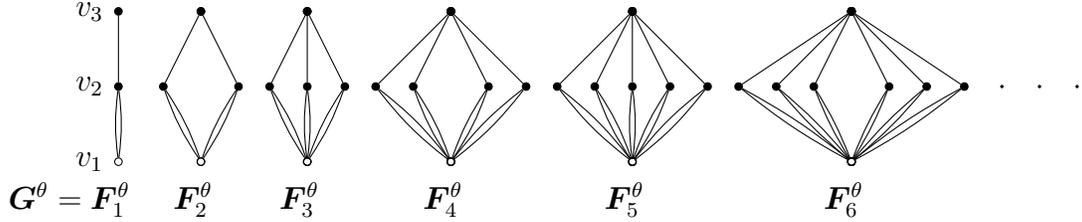

Note that the graphs in the family given in~\Fig{motivating-ex} have a
high degree of symmetry, and this fact is reflected in the spectrum
through eigenvalues with high multiplicity (see \Rem{dualgroup}). The
idea of the construction is to use the frame
$(\wGr F^\theta_a)_{a \in \N}$ to construct new non-isomorphic graphs
that preserve most of the eigenvalues of the initial frame. The
geometric procedure to assemble different frame members to specify the
families of isospectral graphs are determined by $s$-partitions of a
natural number $r\in\N$ (cf. \Def{partition} for a formal definition).

To be concrete, we construct two isospectral graphs using the family
given in~\Fig{motivating-ex} as follows. Consider,
for example, the two different $2$-partitions of the number
$4$, namely $A=\lMset 1,3 \rMset$ and $B=\lMset 2,2 \rMset$, i.e.,
\begin{equation*}
  4=1+3 =2+2
\end{equation*}
(here $\lMset 2,2\rMset$ is the multiset with one element $2$ of
multiplicity $2$, see \App{multisets}).  We choose the vertex
$v_1 \in V_0$ of degree $2$ (due to the double edge) as distinguished
vertex, and we construct a graph $\wGr F_{A,v_1}^\theta$ (called later
\emph{$v_1$-contracted frame union}, see \Def{contr.frame.union})
associated with the first partition $A=\lMset 1,4\rMset$ as follows
(see \Fig{ExampleCospectral}): contract the copies of the
distinguished vertex $v_1$ (outlined vertices) from the frame members
$\wGr F_1^\theta$ and $\wGr F_3^\theta$ in~\Fig{motivating-ex}.
Similarly, define $\wGr F_{B,v_1}^\theta$ by contracting two copies of
$\wGr F_2^\theta$ along the distinguished vertex $v_1$.
\begin{figure}[h]
  \centering
  {
    \begin{tikzpicture}[auto,
      vertex/.style={circle,draw=black!100,fill=black!100, 
        inner sep=0pt,minimum size=1mm},scale=1]
      \foreach \x in {0}
      { \node(C\x) at (0,1) [vertex,draw=black,fill=none,label=below:] {};
        \node (B\x) at (\x*.5,2) [vertex,label=below:] {};
        \path [bend left=7](C\x) edge node[right] {} (B\x);
        \path [bend right=7](C\x) edge node[right] {} (B\x);
        \node (A\x) at (0,3) [vertex,label=below:] {};
        \path [-](B\x) edge node[right] {} (A\x);
      }
      \foreach \x in {-2,0,2}
      { \node(C\x) at (0,1) [vertex,draw=black,fill=none,label=below:] {};
        \node (B\x) at (\x*.5,0) [vertex,label=below:] {};
        \path [bend left=7](C\x) edge node[right] {} (B\x);
        \path [bend right=7](C\x) edge node[right] {} (B\x);
        \node (A\x) at (0,-1) [vertex,label=below:] {};
        \path [-](B\x) edge node[right] {} (A\x);
      }
      \draw[] (.3,.-1.5)node[left] {$\wGr F^\theta_{A,v_1}$};
    \end{tikzpicture}\quad \quad \quad
    \begin{tikzpicture}[auto,
      vertex/.style={circle,draw=black!100,fill=black!100,  
        inner sep=0pt,minimum size=1mm},scale=1]
      \foreach \x in {-1,1}
      { \node(C\x) at (0,1) [vertex,draw=black,fill=none,label=below:] {};
        \node (B\x) at (\x*.5,2) [vertex,label=below:] {};
        \path [bend left=7](C\x) edge node[right] {} (B\x);
        \path [bend right=7](C\x) edge node[right] {} (B\x);
        \node (A\x) at (0,3) [vertex,label=below:] {};
        \path [-](B\x) edge node[right] {} (A\x);
      }
      \foreach \x in {-1,1}
      { \node(C\x) at (0,1) [vertex,draw=black,fill=none,label=below:] {};
        \node (B\x) at (\x*.5,0) [vertex,label=below:] {};
        \path [bend left=7](C\x) edge node[right] {} (B\x);
        \path [bend right=7](C\x) edge node[right] {} (B\x);
        \node (A\x) at (0,-1) [vertex,label=below:] {};
        \path [-](B\x) edge node[right] {} (A\x);
      }
      \draw[] (.3,.-1.5)node[left] {$\wGr F^\theta_{B,v_1}$};
    \end{tikzpicture}
  } \caption{The contracted frame unions $\wGr F^\theta_{A,v_1}$ and
    $\wGr F^\theta_{B,v_1}$ for the two different $2$-partitions
    $A=\lMset 1,3\rMset$ and $B=\lMset 2,2\rMset$ of $4$ are
    isospectral, but not isomorphic for each value $\theta \in \RmodZ$
    of the magnetic potential.}
\label{fig:ExampleCospectral}
\end{figure}
An explicit computation of the eigenvalues of the corresponding
standard Laplacians shows that both graphs $\wGr F^\theta_{A,v_1}$ and
$\wGr F^\theta_{B,v_1}$ are isospectral with spectrum
\begin{equation*}
  \Bigl\lMset
  1-\frac1{\sqrt 3}\sqrt{2+\cos\theta},
  1-\frac 1{\sqrt 3},
  1^{(3)},
  1+\frac 1{\sqrt 3},
  1+\frac1{\sqrt 3}\sqrt{2+\cos\theta},
  \Bigr\rMset,
\end{equation*}
where $1^{(3)}$ means that $1$ has multiplicity $3$, but the
corresponding discrete graphs are not isomorphic.  We actually prove
in \Thm{spec.contr.frame.union} the general result that for an
$s$-partition of $r$ the spectrum of the normalised magnetic Laplacian
is given by
\begin{equation}
  \label{eq:spec.contr.frame.union}
 \spec {\wGr F^\theta_{A,v_1}}
 = \spec {\wG^\theta}
 \uplus \spec {\DirGr {(\wG^\theta)}{v_1}}^{(s-1)}
 \uplus \spec {\DirGr {(\wG^\theta)}{V_0}}^{(r-s)},
\end{equation}
where $\uplus$ denotes the multiset union, see \App{multisets}.  In
our example we have $r=4$ and $s=2$ and the spectra of the building
block $\wG^\theta$ and the magnetic graphs with vertices $v_1$ and
$V_0=\{v_1,v_3\}$ virtualised are given respectively by
\begin{gather}
  \label{eq:spec.block}
  \spec {\wG^\theta}
  = \Bigl\lMset
  1-\frac1{\sqrt 3}\sqrt{2+\cos \theta},
  1,
  1+\frac1{\sqrt 3}\sqrt{2+\cos \theta}
  \Bigr\rMset
  =:\lMset\lambda_1^\theta,1,\lambda_3^\theta\rMset,\\
  \nonumber
  \bigspec{\DirGr {(\wG^\theta)}{v_1}}
  = \Bigl\lMset 1-\frac 1{\sqrt 3},1+\frac 1{\sqrt 3} \Bigr\rMset
  \qquadtext{and}
  \bigspec{\DirGr {(\wG^\theta)}{V_0}}
  = \lMset 1 \rMset.
\end{gather}
\begin{figure}[h]
  \centering
  \begin{tikzpicture}[auto,
    vertex/.style={circle,draw=black!100,fill=black!100, 
      inner sep=0pt,minimum size=1mm},scale=1]
    \begin{scope}
      \pgftransformcm{0.7}{0}{0}{0.7}{\pgfpoint{0cm}{0cm}}
      \begin{scope}
        \pgftransformcm{2}{1}{2.5}{0}{\pgfpoint{0cm}{0cm}}
        \foreach \x in {0}
        { \node(C\x) at (0,1) [vertex,draw=black,fill=none,label=below:] {};
          \node (B\x) at (\x*.5,2) [vertex,label=below:] {};
          \path [bend left=7](C\x) edge node[right] {} (B\x);
          \path [bend right=7](C\x) edge node[right] {} (B\x);
          \node (A\x) at (0,3) [vertex,label=below:] {};
          \path [-](B\x) edge node[right] {} (A\x);
        }
      \end{scope}
      \node (BBB) at (5,2) [label=below:]{};
      \node (AAA) at (7.5,2) [label=below:]{};
      \begin{scope}[dotted,>=stealth]
        \draw[->] (B0) -- (BBB);
        \draw[->] (A0) -- (AAA);
      \end{scope}
      \begin{scope}
        \pgftransformcm{2}{1}{2.5}{0}{\pgfpoint{0cm}{0cm}}
        \node (c) at (0,-5)  [vertex,draw=black,fill=none,label=below:] {};
        \node (b) at (0,-4) [vertex,label=below:] {};
        \node (a) at (0,-3) [vertex,label=below:] {};
          \path [bend left=7](c) edge node[right] {} (b);
          \path [bend right=7](c) edge node[right] {} (b);
          \path [-](b) edge node[right] {} (a);
        \draw[] (.3,.-5.5)node[left] {$\wG^\theta$};

        \foreach \x in {-2,0,2}
        { \node(C\x) at (0,1) [vertex,draw=black,fill=none,label=below:] {};
          \node (B\x) at (\x*.5,0) [vertex,label=below:] {};
          \path [bend left=7](C\x) edge node[right] {} (B\x);
          \path [bend right=7](C\x) edge node[right] {} (B\x);
          \node (A\x) at (0,-1) [vertex,label=below:] {};
          \path [-](B\x) edge node[right] {} (A\x);
        }
        \draw[] (.3,.-1.5)node[left] {$\wGr F^\theta_{A,v_1}$};
      \end{scope}
      \draw[] (-13.5,-1.0) node[right]
      {$\spec{\wG^\theta}=\lMset
        \lambda_1^\theta,1,\lambda_3^\theta \rMset$};
      \node (cc) at (-12.5,2) [label=below:]{};
      \node (bb) at (-10.0,2) [label=below:]{};
      \node (aa) at (-7.5,2) [label=below:]{};
      \begin{scope}[dotted,>=stealth]
        \draw[->] (c) -- (cc);
        \draw[->] (b) -- (bb);
        \draw[->] (a) -- (aa);
      \end{scope}

      \node (CC) at (2.5,2) [label=below:]{};
      \node (BB2) at (2,3) [label=below:]{};
      \node (BB0) at (0.0,2) [label=below:]{};
      \node (BB-2) at (-2,1) [label=below:]{};
      \node (AA) at (-2.5,2) [label=below:]{};
      \begin{scope}[dotted,>=stealth]
        \draw[->] (C0) -- (CC); 
         \draw[->] (B2) -- (BB2);
         \draw[->] (B0) -- (BB0);
         \draw[->] (B-2) -- (BB-2);
         \draw[->] (A0) -- (AA);
      \end{scope}
    \end{scope}
  \end{tikzpicture}

        \begin{tikzpicture}[auto,
    vertex/.style={circle,draw=black!100,fill=black!100, 
      inner sep=0pt,minimum size=1mm},scale=1]
    \begin{scope}
      \pgftransformcm{0.7}{0}{0}{0.7}{\pgfpoint{0cm}{0cm}}
      \begin{scope}
        \pgftransformcm{2}{1}{2.5}{0}{\pgfpoint{0cm}{0cm}}
        \foreach \x in {0}
        { \node(C\x) at (0,1.5) [vertex,scale=2,draw=black,fill=none,label=below:] {};
          \node (B\x) at (\x*.5,2.5) [vertex,label=below:] {};
          \path [bend left=7](C\x) edge node[right] {} (B\x);
          \path [bend right=7](C\x) edge node[right] {} (B\x);
          \node (A\x) at (0,3.5) [vertex,label=below:] {};
          \path [-](B\x) edge node[right] {} (A\x);
        }
      \end{scope}
      \draw[] (-13.5,-1.0) node[right]
      {$\spec{\DirGr{(\wG^\theta)}{v_1}}=\lMset 1-1/\sqrt 3,1+1/\sqrt 3
        \rMset$};
      \node (BBB) at (6.2,-2) [label=below:]{};
      \node (AAA) at (8.8,1.4) [label=below:]{};
      \begin{scope}[dotted,>=stealth]
        \draw[->] (B0) -- (BBB);
        \draw[->] (A0) -- (AAA);
      \end{scope}
      \begin{scope}
        \pgftransformcm{2}{1}{2.5}{0}{\pgfpoint{0cm}{0cm}}
        \node (c) at (0,-5)  [vertex,scale=2.0,draw=black,fill=none,label=below:] {};
        \node (b) at (0,-4) [vertex,label=below:] {};
        \node (a) at (0,-3) [vertex,label=below:] {};
          \path [bend left=7](c) edge node[right] {} (b);
          \path [bend right=7](c) edge node[right] {} (b);
          \path [-](b) edge node[right] {} (a);
        \draw[] (.3,.-5.5)node[left] {$\DirGr {(\wG^\theta)}{v_1}$};

        \foreach \x in {-2,0,2}
        { \node(C\x) at (0,1) [vertex,scale=2.0,draw=black,fill=none,label=below:] {};
          \node (B\x) at (\x*.5,0) [vertex,label=below:] {};
          \path [bend left=7](C\x) edge node[right] {} (B\x);
          \path [bend right=7](C\x) edge node[right] {} (B\x);
          \node (A\x) at (0,-1) [vertex,label=below:] {};
          \path [-](B\x) edge node[right] {} (A\x);
        }
        \draw[] (.3,.-1.5)node[left] {$\DirGr {(\wGr F^\theta_3)}{v_1}$};
        \draw[] (-0.7,4.7)node[left] {$\DirGr {(\wGr F^\theta_1)}{v_1}=\DirGr {(\wG^\theta)}{v_1}$};
      \end{scope}
      \node (bb) at (-10.0,-2) [label=below:]{};
      \node (aa) at (-7.5,1.4) [label=below:]{};
      \begin{scope}[dotted,>=stealth]
        \draw[->] (b) -- (bb);
        \draw[->] (a) -- (aa);
      \end{scope}

      \node (CC) at (2.5,2) [label=below:]{};
      \node (BB2) at (2,3) [label=below:]{};
      \node (BB0) at (0.0,2) [label=below:]{};
      \node (BB-2) at (-2,1) [label=below:]{};
      \node (AA) at (-2.5,-1.4) [label=below:]{};
      \begin{scope}[dotted,>=stealth]
         \draw[->] (B2) -- (BB2);
         \draw[->] (B0) -- (BB0);
         \draw[->] (B-2) -- (BB-2);
         \draw[->] (A0) -- (AA);
      \end{scope}
    \end{scope}
  \end{tikzpicture}

    \begin{tikzpicture}[auto,
    vertex/.style={circle,draw=black!100,fill=black!100, 
      inner sep=0pt,minimum size=1mm},scale=1]
    \begin{scope}
      \pgftransformcm{0.7}{0}{0}{0.7}{\pgfpoint{0cm}{0cm}}
      \begin{scope}
        \pgftransformcm{2}{1}{2.5}{0}{\pgfpoint{0cm}{0cm}}
        \foreach \x in {0}
        { \node(C\x) at (0,1.5) [vertex,scale=2.0,draw=black,fill=none,label=below:] {};
          \node (B\x) at (\x*.5,2.5) [vertex,label=below:] {};
          \path [bend left=7](C\x) edge node[right] {} (B\x);
          \path [bend right=7](C\x) edge node[right] {} (B\x);
          \node (A\x) at (0,3.5) [vertex,scale=2.0,fill=none,label=below:] {};
          \path [-](B\x) edge node[right] {} (A\x);
        }
      \end{scope}
      \draw[] (-13.5,-1.0) node[right]
      {$\spec{\DirGr{(\wG^\theta)}{V_0}}=\lMset 1 \rMset$};
      \begin{scope}
        \pgftransformcm{2}{1}{2.5}{0}{\pgfpoint{0cm}{0cm}}
        \node (c) at (0,-5)  [vertex,scale=2.0,draw=black,fill=none,label=below:] {};
        \node (b) at (0,-4) [vertex,label=below:] {};
        \node (a) at (0,-3) [vertex,scale=2.0,fill=none,label=below:] {};
          \path [bend left=7](c) edge node[right] {} (b);
          \path [bend right=7](c) edge node[right] {} (b);
          \path [-](b) edge node[right] {} (a);
        \draw[] (.3,.-5.5)node[left] {$\DirGr {(\wG^\theta)}{V_0}$};

        \foreach \x in {-2,0,2}
        { \node(C\x) at (0,1) [vertex,scale=2.0,draw=black,fill=none,label=below:] {};
          \node (B\x) at (\x*.5,0) [vertex,label=below:] {};
          \path [bend left=7](C\x) edge node[right] {} (B\x);
          \path [bend right=7](C\x) edge node[right] {} (B\x);
          \node (A\x) at (0,-1) [vertex,scale=2.0,fill=none,label=below:] {};
          \path [-](B\x) edge node[right] {} (A\x);
        }
        \draw[] (.3,.-1.5)node[left] {$\DirGr{(\wGr F^\theta_3)}{V_0}$};
        \draw[] (-0.7,4.7)node[left] {$\DirGr{(\wGr F^\theta_1)}{V_0}=\DirGr{(\wG^\theta)}{V_0}$};
      \end{scope}
      \node (bb) at (-10.0,2) [label=below:]{};
      \begin{scope}[dotted,>=stealth]
        \draw[->] (b) -- (bb);
      \end{scope}

      \node (CC) at (2.5,2) [label=below:]{};
      \node (BB2) at (2,-1) [label=below:]{};
      \node (BB0) at (0.0,2) [label=below:]{};
      \node (BB-2) at (-2,1) [label=below:]{};
      \begin{scope}[dotted,>=stealth]
         \draw[->] (B2) -- (BB2);
         \draw[->] (B0) -- (BB0);
      \end{scope}
    \end{scope}
  \end{tikzpicture}

  \caption{The three types of eigenfunctions (represented by dotted
    vertical lines with arrow pointing in a virtual third dimension);
    Dirichlet vertices are marked outlined with a bigger
    circle:\newline %
    \emph{Top:} The eigenfunctions of $\wG^\theta$ are copied
    symmetrically onto each copy of $\wG^\theta$ in
    $\wGr F^\theta_{A,v_1}$.  Hence, the three eigenfunctions (in the
    picture, it is the constant one) become three eigenfunctions on
    $\wGr F^\theta_{A,v_1}$.  \newline %
    \emph{Middle:} each eigenfunction of $\DirGr {(\wG^\theta)}{v_1}$
    becomes a symmetric copy on each member
    $\DirGr{(\wGr F_a^\theta)}{v_1}$ of $\wGr F^\theta_{A,v_1}$ for
    $a \in A=\lMset 3,1\rMset$.  To make them orthogonal to the
    symmetric ones, we can choose only $s-1=1$ one here for each of
    the two eigenfunctions of $\DirGr {(\wG^\theta)}{v_1}$, hence
    $2(s-1)=2$ eigenvalues of $\wGr F^\theta_{A,v_1}$ are captured.
    \newline %
    \emph{Bottom:} we consider the eigenfunctions of
    $\DirGr {(\wG^\theta)}{V_0}$ (here only one) onto each copy of
    $\DirGr {(\wG^\theta)}{V_0}$ in $\DirGr{(\wGr F_a^\theta)}{v_1}$
    for $a \in \lMset 3,1\rMset$.  There are here $r-s=(3-1)+(1-1)=2$
    such eigenfunctions orthogonal to the previous ones, supported
    here only on $\DirGr{(\wGr F_3^\theta)}{v_1}$.  }
  \label{fig:expl.spec}
\end{figure}

\begin{remark}[explanation of spectrum]
  We give here some heuristic reasons for the spectrum having the form
  as in~\eqref{eq:spec.contr.frame.union} and will formalise them in
  the proofs of the main results of the following sections (see \Prp{frame.spec}
  and \Thm{spec.contr.frame.union}).
  \begin{enumerate}
  \item Each eigenfunction of the building block $\wG^\theta$ carries
    over to $\wGr F^\theta_{A,v_1}$ by \emph{symmetrically} extending
    it to each copy of $\wG^\theta$ with the same eigenvalue; hence
    each eigenvalue of $\wG^\theta$ contributes one time its original
    multiplicity.  This gives three eigenvalues in our concrete
    example.
  \item Each eigenfunction of $\DirGr {(\wG^\theta)}{v_1}$ carries
    over to symmetric copies on each member $\wGr F_a^\theta$ of
    $\wGr F^\theta_{A,v_1}$ for $a \in A$ with the same eigenvalue.
    We can suitably choose $s-1$ copies of them being orthogonal to
    the symmetric ones constructed in the first item. This gives
    $2 \cdot(s-1)=2$ eigenvalues in our case as $s=2$.

  \item Finally, each eigenfunction of $\DirGr {(\wG^\theta)}{V_0}$
    carries over to an eigenfunction on each $\wGr F_a^\theta$ for
    $a \in A$ with the same eigenvalue.  There are $a-1$ of them on
    $\wGr F_a^\theta$ orthogonal to the symmetric ones.  As they
    vanish on the contracted vertices $V_0$, they remain
    eigenfunctions on $\wGr F^\theta_{A,v_1}$, and there are
    $\sum_{a \in A}(a-1)=r-s$ of them, again orthogonal to the
    previously constructed ones. Therefore we obtain $1 \cdot(r-s)=2$
    new eigenvalues in our case.
  \end{enumerate}
  One can see that all eigenvalues are captured via this procedure
  (here $r+s+1=7=3+2+2$).  In addition, the two graphs are not
  isomorphic, as $\wGr F^\theta_{A,v_1}$ has a pendant vertex while
  $\wGr F^\theta_{B,v_1}$ has no vertex of degree $1$.
\end{remark}

The important feature for the construction of families of isospectral
graphs in the preceding example is the partition of a natural number
that selects members of a frame $(\wGr F_a^\theta)_{a \in \N}$ that
will be contracted at the distinguished vertices (the outlined vertex
$v_1$ in our example).  In fact, there is an infinite collection of
(finite) families of isospectral graphs that can be constructed
similarly.  We illustrate this with all different $4$-partitions of
$8$:
\begin{equation*}
  A_1=\lMset 1,1,1,5 \rMset, \quad
  A_2=\lMset 1,1,2,4 \rMset, \quad
  A_3=\lMset 1,1,3,3 \rMset, \quad
  A_4=\lMset [1,2,2,3 \rMset \quadtext{and}
  A_5=\lMset 2,2,2,2 \rMset.
\end{equation*}
For each of the different $s$-partitions $A_q$ ($q \in \{1,2,3,4,5\}$)
($s=4$) of $r=8$ construct the graphs $\wGr F_{A_q,v_1}^\theta$ as
before (see \Fig{ExampleCospectral3}).
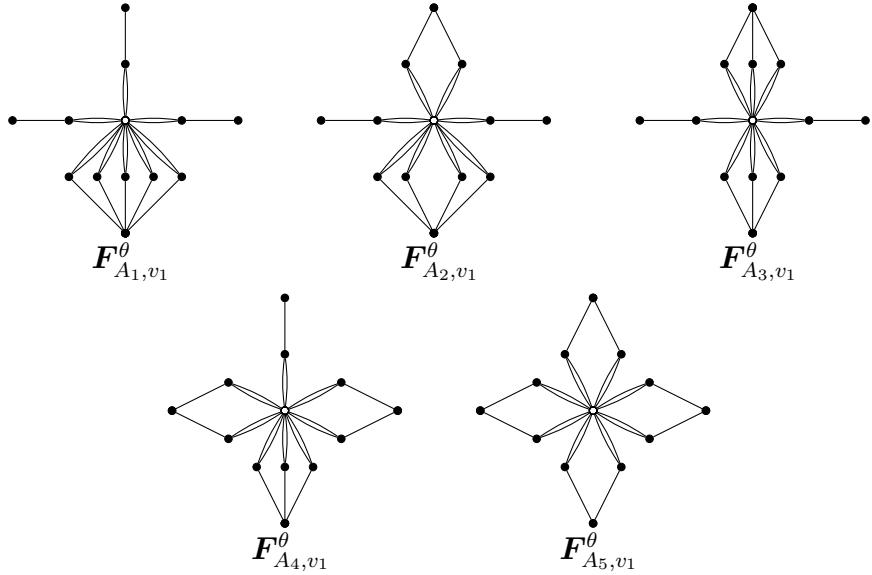
\begin{figure}[h]
  \centering
  {
    \begin{tikzpicture}[auto,
      vertex/.style={circle,draw=black!100,fill=black!100, 
        inner sep=0pt,minimum size=1mm},scale=.75]
      \foreach \x in {0}
      { \node(C\x) at (0,1) [vertex,draw=black,fill=none,label=below:] {};
        \node (B\x) at (\x*.5,2) [vertex,label=below:] {};
        \path [bend left=7](C\x) edge node[right] {} (B\x);
        \path [bend right=7](C\x) edge node[right] {} (B\x);
        \node (A\x) at (0,3) [vertex,label=below:] {};
        \path [-](B\x) edge node[right] {} (A\x);
      }
      \foreach \x in {0}
      { \node(C\x) at (0,1) [vertex,draw=black,fill=none,label=below:] {};
        \node (B\x) at (-1,1+\x*.5) [vertex,label=below:] {};
        \path [bend left=7](C\x) edge node[right] {} (B\x);
        \path [bend right=7](C\x) edge node[right] {} (B\x);
        \node (A\x) at (-2,1) [vertex,label=below:] {};
        \path [-](B\x) edge node[right] {} (A\x);
      }
      \foreach \x in {0}
      { \node(C\x) at (0,1) [vertex,draw=black,fill=none,label=below:] {};
        \node (B\x) at (1,1+\x*.5) [vertex,label=below:] {};
        \path [bend left=7](C\x) edge node[right] {} (B\x);
        \path [bend right=7](C\x) edge node[right] {} (B\x);
        \node (A\x) at (2,1) [vertex,label=below:] {};
        \path [-](B\x) edge node[right] {} (A\x);
      }
      \foreach \x in {-2,-1,0,1,2}
      { \node(C\x) at (0,1) [vertex,draw=black,fill=none,label=below:] {};
        \node (B\x) at (\x*.5,0) [vertex,label=below:] {};
        \path [bend left=7](C\x) edge node[right] {} (B\x);
        \path [bend right=7](C\x) edge node[right] {} (B\x);
        \node (A\x) at (0,-1) [vertex,label=below:] {};
        \path [-](B\x) edge node[right] {} (A\x);
      }
      \draw[] (1,.-1.5)node[left] {$\wGr F_{A_1,v_1}^\theta$};
    \end{tikzpicture}\quad \quad
    \begin{tikzpicture}[auto,
      vertex/.style={circle,draw=black!100,fill=black!100, 
        inner sep=0pt,minimum size=1mm},scale=.75]
      \foreach \x in {1,-1}
      { \node(C\x) at (0,1) [vertex,draw=black,fill=none,label=below:] {};
        \node (B\x) at (\x*.5,2) [vertex,label=below:] {};
        \path [bend left=7](C\x) edge node[right] {} (B\x);
        \path [bend right=7](C\x) edge node[right] {} (B\x);
        \node (A\x) at (0,3) [vertex,label=below:] {};
        \path [-](B\x) edge node[right] {} (A\x);
      }
      \foreach \x in {0}
      { \node(C\x) at (0,1) [vertex,draw=black,fill=none,label=below:] {};
        \node (B\x) at (-1,1+\x*.5) [vertex,label=below:] {};
        \path [bend left=7](C\x) edge node[right] {} (B\x);
        \path [bend right=7](C\x) edge node[right] {} (B\x);
        \node (A\x) at (-2,1) [vertex,label=below:] {};
        \path [-](B\x) edge node[right] {} (A\x);
      }
      \foreach \x in {0}
      { \node(C\x) at (0,1) [vertex,draw=black,fill=none,label=below:] {};
        \node (B\x) at (1,1+\x*.5) [vertex,label=below:] {};
        \path [bend left=7](C\x) edge node[right] {} (B\x);
        \path [bend right=7](C\x) edge node[right] {} (B\x);
        \node (A\x) at (2,1) [vertex,label=below:] {};
        \path [-](B\x) edge node[right] {} (A\x);
      }
      \foreach \x in {-2,-1,1,2}
      { \node(C\x) at (0,1) [vertex,draw=black,fill=none,label=below:] {};
        \node (B\x) at (\x*.5,0) [vertex,label=below:] {};
        \path [bend left=7](C\x) edge node[right] {} (B\x);
        \path [bend right=7](C\x) edge node[right] {} (B\x);
        \node (A\x) at (0,-1) [vertex,label=below:] {};
        \path [-](B\x) edge node[right] {} (A\x);
      }
      \draw[] (1,.-1.5)node[left] {$\wGr F_{A_2,v_1}^\theta$};
    \end{tikzpicture}
    \quad \quad
    \begin{tikzpicture}[auto,
      vertex/.style={circle,draw=black!100,fill=black!100, 
        inner sep=0pt,minimum size=1mm},scale=.75]
      \foreach \x in {1,0,-1}
      { \node(C\x) at (0,1) [vertex,draw=black,fill=none,label=below:] {};
        \node (B\x) at (\x*.5,2) [vertex,label=below:] {};
        \path [bend left=7](C\x) edge node[right] {} (B\x);
        \path [bend right=7](C\x) edge node[right] {} (B\x);
        \node (A\x) at (0,3) [vertex,label=below:] {};
        \path [-](B\x) edge node[right] {} (A\x);
      }
      \foreach \x in {0}
      { \node(C\x) at (0,1) [vertex,draw=black,fill=none,label=below:] {};
        \node (B\x) at (-1,1+\x*.5) [vertex,label=below:] {};
        \path [bend left=7](C\x) edge node[right] {} (B\x);
        \path [bend right=7](C\x) edge node[right] {} (B\x);
        \node (A\x) at (-2,1) [vertex,label=below:] {};
        \path [-](B\x) edge node[right] {} (A\x);
      }
      \foreach \x in {0}
      { \node(C\x) at (0,1) [vertex,draw=black,fill=none,label=below:] {};
        \node (B\x) at (1,1+\x*.5) [vertex,label=below:] {};
        \path [bend left=7](C\x) edge node[right] {} (B\x);
        \path [bend right=7](C\x) edge node[right] {} (B\x);
        \node (A\x) at (2,1) [vertex,label=below:] {};
        \path [-](B\x) edge node[right] {} (A\x);
      }
      \foreach \x in {-1,0,1}
      { \node(C\x) at (0,1) [vertex,draw=black,fill=none,label=below:] {};
        \node (B\x) at (\x*.5,0) [vertex,label=below:] {};
        \path [bend left=7](C\x) edge node[right] {} (B\x);
        \path [bend right=7](C\x) edge node[right] {} (B\x);
        \node (A\x) at (0,-1) [vertex,label=below:] {};
        \path [-](B\x) edge node[right] {} (A\x);
      }
      \draw[] (1,.-1.5)node[left] {$\wGr F_{A_3,v_1}^\theta$};
    \end{tikzpicture}\\	
    \begin{tikzpicture}[auto,
      vertex/.style={circle,draw=black!100,fill=black!100, 
        inner sep=0pt,minimum size=1mm},scale=.75]
      \foreach \x in {0}
      { \node(C\x) at (0,1) [vertex,draw=black,fill=none,label=below:] {};
        \node (B\x) at (\x*.5,2) [vertex,label=below:] {};
        \path [bend left=7](C\x) edge node[right] {} (B\x);
        \path [bend right=7](C\x) edge node[right] {} (B\x);
        \node (A\x) at (0,3) [vertex,label=below:] {};
        \path [-](B\x) edge node[right] {} (A\x);
      }
      \foreach \x in {1,-1}
      { \node(C\x) at (0,1) [vertex,draw=black,fill=none,label=below:] {};
        \node (B\x) at (-1,1+\x*.5) [vertex,label=below:] {};
        \path [bend left=7](C\x) edge node[right] {} (B\x);
        \path [bend right=7](C\x) edge node[right] {} (B\x);
        \node (A\x) at (-2,1) [vertex,label=below:] {};
        \path [-](B\x) edge node[right] {} (A\x);
      }
      \foreach \x in {1,-1}
      { \node(C\x) at (0,1) [vertex,draw=black,fill=none,label=below:] {};
        \node (B\x) at (1,1+\x*.5) [vertex,label=below:] {};
        \path [bend left=7](C\x) edge node[right] {} (B\x);
        \path [bend right=7](C\x) edge node[right] {} (B\x);
        \node (A\x) at (2,1) [vertex,label=below:] {};
        \path [-](B\x) edge node[right] {} (A\x);
      }
      \foreach \x in {-1,0,1}
      { \node(C\x) at (0,1) [vertex,draw=black,fill=none,label=below:] {};
        \node (B\x) at (\x*.5,0) [vertex,label=below:] {};
        \path [bend left=7](C\x) edge node[right] {} (B\x);
        \path [bend right=7](C\x) edge node[right] {} (B\x);
        \node (A\x) at (0,-1) [vertex,label=below:] {};
        \path [-](B\x) edge node[right] {} (A\x);
      }
      \draw[] (1,.-1.5)node[left] {$\wGr F_{A_4,v_1}^\theta$};
    \end{tikzpicture}\quad \quad
    \begin{tikzpicture}[auto,
      vertex/.style={circle,draw=black!100,fill=black!100, 
        inner sep=0pt,minimum size=1mm},scale=.75]
      \foreach \x in {1,-1}
      { \node(C\x) at (0,1) [vertex,draw=black,fill=none,label=below:] {};
        \node (B\x) at (\x*.5,2) [vertex,label=below:] {};
        \path [bend left=7](C\x) edge node[right] {} (B\x);
        \path [bend right=7](C\x) edge node[right] {} (B\x);
        \node (A\x) at (0,3) [vertex,label=below:] {};
        \path [-](B\x) edge node[right] {} (A\x);
      }
      \foreach \x in {1,-1}
      { \node(C\x) at (0,1) [vertex,draw=black,fill=none,label=below:] {};
        \node (B\x) at (-1,1+\x*.5) [vertex,label=below:] {};
        \path [bend left=7](C\x) edge node[right] {} (B\x);
        \path [bend right=7](C\x) edge node[right] {} (B\x);
        \node (A\x) at (-2,1) [vertex,label=below:] {};
        \path [-](B\x) edge node[right] {} (A\x);
      }
      \foreach \x in {1,-1}
      { \node(C\x) at (0,1) [vertex,draw=black,fill=none,label=below:] {};
        \node (B\x) at (1,1+\x*.5) [vertex,label=below:] {};
        \path [bend left=7](C\x) edge node[right] {} (B\x);
        \path [bend right=7](C\x) edge node[right] {} (B\x);
        \node (A\x) at (2,1) [vertex,label=below:] {};
        \path [-](B\x) edge node[right] {} (A\x);
      }
      \foreach \x in {1,-1}
      { \node(C\x) at (0,1) [vertex,draw=black,fill=none,label=below:] {};
        \node (B\x) at (\x*.5,0) [vertex,label=below:] {};
        \path [bend left=7](C\x) edge node[right] {} (B\x);
        \path [bend right=7](C\x) edge node[right] {} (B\x);
        \node (A\x) at (0,-1) [vertex,label=below:] {};
        \path [-](B\x) edge node[right] {} (A\x);
      }
      \draw[] (1,.-1.5)node[left] {$\wGr F_{A_5,v_1}^\theta$};
    \end{tikzpicture}

  } \caption{The graphs $\wGr F_{A_q,v_1}^\theta$ defined by the
    $4$-partitions $A_1=\lMset1,1,1,5\rMset$,
    $A_2=\lMset1,1,2,4\rMset$, $A_3=\lMset1,1,3,3\rMset$,
    $A_4=\lMset1,2,2,3\rMset$ and $A_5=\lMset2,2,2,2\rMset$ of $r=8$.
    All graphs have $r+s+1=13$ vertices ($s=4$) and $3r=24$ edges.
    There are five different $4$-partitions of $8$ (all listed
    above).}
  \label{fig:ExampleCospectral3}
\end{figure}
An explicit computation of the eigenvalues of the Laplacian of
$\wGr F_{A_q,v_1}^\theta$ shows again that the five graphs are isospectral
for $q\in\{1,\dots,5\}$ and with spectrum
\begin{equation*}
  \wGr F_{A_q,v_1}^\theta
  = \Bigl\lMset
  1-\frac13\sqrt{2+\cos\theta},
  \Bigl(1-\frac 13 \sqrt 3\Bigr)^{(3)},
  1^{(5)},
  \Bigl(1+\frac 13 \sqrt 3\Bigr)^{(3)},
  1+\frac13\sqrt{2+\cos\theta},
  \Bigr\rMset
\end{equation*}
in accordance with~\eqref{eq:spec.contr.frame.union} for $r=8$ and
$s=4$. Note that isospectrality is guaranteed for \emph{any} constant value of the
magnetic potential on one of the multiple edges.
Finally, all graphs $\wGr F_{A_q,v_1}^\theta$
in~\Fig{ExampleCospectral3} are mutually non-isomorphic, as their
corresponding degree multisets (see~\eqref{eq:deg.set}) contain the
corresponding partition $A_q$ (highlighted as bold numbers), and hence
are different.  In fact, the degree sequences of the graphs
$\wGr F_{A_q,v_1}^\theta$ are given by
\begin{center}
  \begin{tabular}{|l|l|}
    \hline
    Graph	&  Degree multiset\\
    \hline
    $\wGr F_{A_1,v_1}^\theta$
             &  $\lMset\bm{1},\bm{1},\bm{1},3^{(8)},\bm{5},16\rMset$\\
    \hline
    $\wGr F_{A_2,v_1}^\theta$
             &  $\lMset\bm{1},\bm{1},\bm{2},3^{(8)},\bm{4},16\rMset$\\
    \hline
    $\wGr F_{A_3,v_1}^\theta$
             &  $\lMset\bm{1},\bm{1},\bm{3},\bm{3},3^{(8)},16\rMset$\\
    \hline
    $\wGr F_{A_4,v_1}^\theta$
             &  $\lMset\bm{1},\bm{2},\bm{2},\bm{3},3^{(8)},16\rMset$\\
    \hline
    $\wGr F_{A_5,v_1}^\theta$
             &  $\lMset\bm{2},\bm{2},\bm{2},\bm{2},3^{(8)},16\rMset$.\\
    \hline
  \end{tabular}
\end{center}

%
\section{Frames constructed from a building block and their spectra}
\label{sec:spectra-of-frames}
%
We construct formally in this section a family of magnetic graphs, called a
\emph{frame}, constructed from a given magnetic graph as a building
block and a subset of its vertex set along which we identify copies of
the given graph.  In particular, the spectrum of this family depends
only on the given magnetic graph and the subset of vertices.

Given a magnetic graph $\wG$ and a subset $V_0$ of its vertices, we
define a geometrical construction of graphs with a symmetric
structure:
\begin{definition}[frame members and frames]
  \label{def:frame}
  Let $\wG=(\G,\alpha,w)$ be a magnetic graph (which we call \emph{building block}), $V_0\subset V(\G)$ and $a \in \N$.
  \begin{enumerate}
  \item
    \label{frame.a}
    Define the \emph{$a$-th frame member} obtained from $\G$
    identified along $V_0$ by
    \begin{equation*}
      \Gr F_a =\Gr F_a(\G,V_0):=\quotient[V_0]{\G^a},
      \qquadtext{where}
      \G^a := \bigdcup_{j \in \{1,\dots,a\}} (\G \times \{j\})
    \end{equation*}
    is the disjoint union of $a$ copies of $\G$ with vertex set
    $V(\G^a)=V(\G)\times \{1,\dots,a\}$ and edge set
    $E(\G^a)=E(\G)\times \{1,\dots,a\}$.  The equivalence relation
    $\sim_{V_0}$ on the vertex set $V(\G^a)$ of the disjoint union is
    given by
    \begin{equation*}
      (v,i) \sim (v',j) \qquadtext{if and only if}
      v=v' \quadtext{and} v \in V_0,
    \end{equation*}
    and all other pairs are equivalent only to itself, i.e., we contract
    each vertex $v \in V_0$ of all copies to one vertex. Denote by
    $[(v,j)]$ the corresponding class specifying a vertex in $F_a$.
    Moreover, we call
    \begin{equation*}
      \wGr F_a(\wG,V_0):=(\Gr F_a(\G,V_0), \alpha,w)
    \end{equation*}
    the \emph{$a$-th ($\wG$-)frame member} obtained from the magnetic graph
    $\wG$ identified along $V_0$, where the weights and vector
    potentials are the same on each copy (and also denoted by the same
    symbol), i.e., $w_{(e,j)}:=w_e$ and $\alpha_{(e,j)}:=\alpha_e$
    for all $e \in E(\G)$ and $j \in \{1,\dots,a\}$.

  \item
    \label{frame.b}
    The family $(\wGr F_a(\wG, V_0))_{a \in \N}$ is called a
    \emph{($\wG$-)frame} identified along $V_0$.
  \end{enumerate}
  If the dependence on $\G$ and $V_0$ are clear from the context we
  will denote a frame member and a frame simply by $\wGr F_a$ and
  $(\wGr F_a)_{a \in \N}$, respectively.
\end{definition}

\begin{remark}
  \label{rem:simple.frames}
  It is easily seen that for a simple graph $\wG$ and $a \ge 2$, the
  frame $\wGr F_a(\wG,V_0)$ is simple if and only if there are no
  edges inside the identified vertex set $V_0$.
\end{remark}

\begin{remark}[frames with virtualised vertices]
  \label{rem:frame.virt}
  We also allow that $\wG$ has \emph{virtualised} vertices
  $V_1 \subset V(\wG)$, i.e., vertices on which we impose Dirichlet
  boundary conditions (see \Def{dir.lapl}).  We write
  $\DirGr {(\wGr F_a)}{V_1} :=\wGr F_a(\DirGr \wG {V_1},V_0)$ for the
  $a$-th $\DirGr \wG{V_1}$-frame member identified along $V_0$.  Note
  that the case $V_1=\emptyset$ is just the case described in
  \Def{frame}.
\end{remark}
The following result is a direct result of the preceding definitions.
\begin{lemma}[order and degree multiset of frame members]
  \label{lem:frame.order}
  Let $\DirGr \wG {V_1}$ be a magnetic graph with virtualised vertices
  $V_1 \subset V(\G)$.  Moreover, let $V_0$ be a subset of vertices
  such that $V_1 \subset V_0 \subset V$.  Then the order and the
  number of edges of the $a$-th frame member
  $\DirGr {(\Gr F_a)}{V_1}:=\Gr F_a(\DirGr \G {V_1} ,V_0)$ are given
  respectively by
  \begin{align*}
    \card{\DirGr {(\Gr F_a)}{V_1}}
       =a(\card \G - \card {V_0}) + \card {V_0} - \card {V_1}
      \qquadtext{and}
    \card{E(F_a)}=\card{E(\DirGr {(\Gr F_a)}{V_1})} &=  a \card{E(\G)}.
  \end{align*}
  Moreover, the degree multiset of $\DirGr {(\Gr F_a)}{V_1}$ is
  given by
  \begin{equation*}
    \deg \bigl(\DirGr {(\Gr F_a)}{V_1}\bigr)
    = \biguplus_{v\in V(\G)\setminus V_0} \lMset \deg_\G v\rMset ^{(a)}
    \uplus
     \biguplus_{v_0\in V_0 \setminus V_1}
    \lMset a \deg_\G v_0\rMset.
  \end{equation*}
\end{lemma}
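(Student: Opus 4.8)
The plan is to unfold the definition $\DirGr {(\Gr F_a)}{V_1}=\Gr F_a(\DirGr \G {V_1},V_0)$ into the two elementary operations from which it is built — taking $a$ disjoint copies of the building block $\DirGr \G {V_1}$, and then contracting along the surviving copies of the vertices in $V_0$ — and to track how each of the three quantities transforms under them. Since $V_1\subset V_0\subset V(\G)$, virtualising $V_1$ leaves the vertex set $V(\G)\setminus V_1=(V(\G)\setminus V_0)\,\dcup\,(V_0\setminus V_1)$, and because only edges with \emph{both} endpoints in $V_1$ are deleted (see \Subsec{dir.mw.lapl}), every vertex $v\notin V_1$ retains all its incident edges, so $\deg_{\DirGr \G {V_1}}v=\deg_\G v$ for such $v$. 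This observation is what lets all degrees below be written with respect to $\G$ rather than $\DirGr \G {V_1}$.

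First I would count vertices. The disjoint union $(\DirGr \G {V_1})^a$ has $a\,\card{V(\G)\setminus V_1}=a(\card\G-\card{V_1})$ vertices. Contracting along $V_0$ merges, for each of the $\card{V_0}-\card{V_1}$ vertices in $V_0\setminus V_1$, its $a$ copies into one; by \Def{contraction} the shrinking number is therefore $(a-1)(\card{V_0}-\card{V_1})$. Subtracting and simplifying gives
\begin{equation*}
  a(\card\G-\card{V_1})-(a-1)(\card{V_0}-\card{V_1})
  = a(\card\G-\card{V_0})+\card{V_0}-\card{V_1},
\end{equation*}
as claimed. The edge count is immediate: for the non-virtualised frame $\Gr F_a=\Gr F_a(\G,V_0)$, disjoint union multiplies the number of edges by $a$ and contraction leaves the edge set unchanged (the edges are not altered, only their endpoints relabelled), so $\card{E(\Gr F_a)}=a\,\card{E(\G)}$; the virtualised frame has the same count, because virtualising $V_1\subset V_0$ deletes only the edges internal to $V_1$, of which there are none in the cases of interest (equivalently $\card{E(\DirGr \G {V_1})}=\card{E(\G)}$).

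The degree multiset is the step needing the most care, and I would obtain it by partitioning the vertices of $\DirGr {(\Gr F_a)}{V_1}$ into the two types that survive the construction. For $v\in V(\G)\setminus V_0$ none of the $a$ copies is contracted, so each copy is a distinct vertex of degree $\deg_\G v$ (using $v\notin V_1$), contributing $\lMset\deg_\G v\rMset^{(a)}$. For $v_0\in V_0\setminus V_1$ the $a$ copies are merged into a single vertex $[(v_0,\cdot)]$, whose set of outgoing edges is the \emph{disjoint} union $\bigdcup_{j=1}^{a}E_{(v_0,j)}$ of the edge sets of the copies (exactly the edge-set identity of \Subsec{vx.contr}); hence its degree is $\sum_{j=1}^{a}\deg_\G v_0=a\deg_\G v_0$, contributing $\lMset a\deg_\G v_0\rMset$. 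Taking the multiset union over both families yields the stated formula, and as a consistency check the number of vertices counted this way, $a(\card\G-\card{V_0})+(\card{V_0}-\card{V_1})$, agrees with the order computed above.

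The only genuine subtlety — and the point I would be most careful about — is that contracting the $a$ copies of a single vertex $v_0$ creates no loops and destroys no edges: distinct copies $(v_0,i)$ and $(v_0,j)$ lie in different components of the disjoint union and are therefore non-adjacent, which is precisely why the edge-set union above is disjoint and why the degree adds up to exactly $a\deg_\G v_0$ with each edge still counted once. Everything else is routine bookkeeping of the two set operations.
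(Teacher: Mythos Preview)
Your proof is correct and follows exactly the route the paper indicates: the paper does not give a detailed argument at all, stating only that the lemma is ``a direct result of the preceding definitions,'' and your proposal is precisely that careful unfolding of \Def{frame} and \Def{contraction}. Your flag on the edge-count equality --- that $\card{E(F_a)}=\card{E(\DirGr{(\Gr F_a)}{V_1})}$ silently assumes there are no edges internal to $V_1$ --- is a genuine observation that the paper leaves implicit.
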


We prove next, that the spectrum of the magnetic graph
$\wGr F_a(\DirGr \wG {V_1},V_0)$ with building block
$\DirGr \wG {V_1}$ identified along $V_0 \subset V(\G)$ depends on the
spectrum of $\DirGr \wG {V_1}$ and $\DirGr \wG {V_0}$ only.
Basically, we use the cyclic symmetry of the $a$-th frame member.
\begin{proposition}[spectrum of frame members]
  \label{prp:frame.spec}
  Let $\DirGr \wG {V_1}$ be a magnetic graph with virtualised vertices
  $V_1 \subset V(\G)$.  Moreover, let $V_0$ be a subset of vertices
  such that $V_1 \subset V_0 \subset V$.  Then the spectrum of the
  $a$-th $\DirGr \wG {V_1}$-frame member
  $\wGr F_a(\DirGr \wG {V_1},V_0)$ identified along $V_0$ ($a\in\N$)
  is given by
  \begin{equation*}
    \bigspec{\wGr F_a(\DirGr \wG {V_1} ,V_0)}
    =\bigspec{\DirGr \wG {V_1}} \uplus \bigspec{\DirGr \wG {V_0}}^{(a-1)}.
  \end{equation*}
\end{proposition}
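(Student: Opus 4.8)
The plan is to exploit the symmetry of $\wGr F_a := \wGr F_a(\DirGr{\wG}{V_1},V_0)$ under the cyclic group $\Z_a$ (indeed the full symmetric group $S_a$) permuting the $a$ identified copies: this action fixes the contracted vertices pointwise and commutes with $\lapl{\wGr F_a}$, so it suffices to diagonalise the Laplacian on each isotypic component. Write $W:=V(\G)\setminus V_0$ for the private vertices and $U:=V_0\setminus V_1$ for the shared (contracted) vertices, so that, recalling $V_1\subset V_0$, the vertex set of $\wGr F_a$ is $U\sqcup\bigl(W\times\{1,\dots,a\}\bigr)$ and $\lsqr{V(\wGr F_a),\deg^w}$ splits orthogonally into the \emph{symmetric} subspace $\HS_{\mathrm s}$ of functions that are constant across the $a$ copies (and arbitrary on $U$) and its complement, the \emph{sum-zero} subspace
\[
  \HS_0:=\Bigl\{f : f|_U=0,\ \textstyle\sum_{j=1}^a f(w,j)=0\ \text{for all }w\in W\Bigr\}.
\]
First I would verify directly that these two subspaces are orthogonal — the only surviving terms of the inner product live on the $W$-copies and vanish by the sum-zero condition — and that their dimensions $\card{U}+\card{W}$ and $(a-1)\card{W}$ add up to the order of $\wGr F_a$ given in \Lem{frame.order}, so that they span.

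On $\HS_{\mathrm s}$ I would identify the Laplacian with $\laplDir{\wG}{V_1}$. Define the symmetric extension $J\colon \lsqr{V(\DirGr{\G}{V_1}),\deg^w}\to\lsqr{V(\wGr F_a),\deg^w}$ by $(Jg)(w,j)=g(w)$ and $(Jg)(v_0)=g(v_0)$ for $v_0\in U$. Because contraction multiplies the weighted degree on each $v_0\in U$ by $a$ (cf.\ \eqref{eq:weight.on.quot} and \Lem{frame.order}) while the degrees on $W$ are unchanged, one gets $J^*J=a\cdot\id$, so $U_a:=a^{-1/2}J$ is a unitary onto $\HS_{\mathrm s}$. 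A short computation with \eqref{eq:DML} then shows $\lapl{\wGr F_a}J=J\,\laplDir{\wG}{V_1}$: on a private vertex the action is copy-wise and unchanged, whereas on a shared vertex $v_0$ the factor $a$ from its weighted degree exactly cancels the factor $a$ coming from summing equal contributions over the $a$ copies. Hence $\lapl{\wGr F_a}$ restricted to $\HS_{\mathrm s}$ is unitarily equivalent to $\laplDir{\wG}{V_1}$, contributing the summand $\bigspec{\DirGr{\wG}{V_1}}$.

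On $\HS_0$ I would recover $a-1$ copies of the $V_0$-Dirichlet Laplacian. The subspace is invariant: evaluating $\lapl{\wGr F_a}f$ at a shared vertex $v_0\in U$ produces a term proportional to $\sum_{j}f(\bd_+e,j)$, which vanishes by the sum-zero condition, while at a private vertex the action decouples across copies. Since any $f\in\HS_0$ vanishes on all of $V_0$ (on $V_1$ because these vertices are already virtualised in the building block, on $U$ by definition of $\HS_0$), the decoupled action on each copy is exactly $\laplDir{\wG}{V_0}$ (see \Def{dir.lapl}). Identifying $\HS_0\cong \lsqr{V(\DirGr{\G}{V_0}),\deg^w}\otimes\{x\in\C^a:\sum_j x_j=0\}$, the Laplacian acts as $\laplDir{\wG}{V_0}\otimes\id$, so each eigenvalue of $\laplDir{\wG}{V_0}$ occurs with multiplicity $a-1$, contributing $\bigspec{\DirGr{\wG}{V_0}}^{(a-1)}$. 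Combining the two orthogonal pieces yields the claimed multiset identity.

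The main obstacle is purely a bookkeeping one: tracking the weight rescaling at the contracted vertices $V_0$. One must check that the factor $a$ appearing in their weighted degree is compensated in two places — once to turn the symmetric extension $J$ into a genuine unitary, and once in the cancellation that both makes $\HS_0$ invariant and collapses it to the $V_0$-Dirichlet Laplacian. Everything else is the standard decomposition of a graph Laplacian along the isotypic components of a commuting finite-group action.
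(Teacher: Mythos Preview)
Your proof is correct and follows essentially the same idea as the paper's: exploit the $\Z_a$-symmetry of the frame member to split the Hilbert space into the symmetric piece (giving $\spec{\DirGr{\wG}{V_1}}$) and its orthogonal complement (giving $(a-1)$ copies of $\spec{\DirGr{\wG}{V_0}}$), then count dimensions via \Lem{frame.order}.

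The only real difference is presentational. The paper works eigenfunction-by-eigenfunction: it takes an orthonormal basis of eigenfunctions of $\laplDir{\wG}{V_1}$ and of $\laplDir{\wG}{V_0}$, extends the former symmetrically and the latter by twisting with the nontrivial $a$-th roots of unity $\eta$ (so $\wt h_{k',\eta}([(v,j)])=\eta^j h_{k'}(v)$), and then verifies explicitly that each lift is an eigenfunction and that the whole family is orthogonal. You instead decompose the space first into the invariant blocks $\HS_{\mathrm s}$ and $\HS_0$, and then identify the restricted Laplacians with $\laplDir{\wG}{V_1}$ and $\laplDir{\wG}{V_0}\otimes\id_{a-1}$ respectively. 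Your version is exactly the ``group-theoretical justification'' the paper sketches in \Rem{dualgroup}; it avoids choosing an eigenbasis and the explicit root-of-unity bookkeeping, at the cost of the intertwining/invariance checks you outline. Both routes hinge on the same two cancellations you flag: the factor $a$ in the weighted degree at a contracted vertex, and the vanishing of $\sum_j$ on the complement (which in the paper appears as $\sum_{j=1}^a\eta^j=0$).
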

\begin{proof}
  We assume for ease of notation that $V_1=\emptyset$, i.e.,
  $\DirGr \wG {V_1}=\wG$, the general case can be treated exactly in the
  same way.

  Let $n=\card{\wG}=\card{V}$ and
  $\spec{\wG}=\lMset\rho_1,\rho_2,\dots,\rho_n\rMset$ be eigenvalues
  of $\lapl {\wG}$ with corresponding orthonormal eigenfunctions given
  by $\{f_1,f_2,\dots,f_n\}$.  For each $k \in \{1,\dots, n\}$, we
  extend first the eigenfunction $f_k$ onto the disjoint union $\wG^a$
  to a function $ \map{\wt f_k}{V(\wG^a)}{\C}$ by
  $\wt f_k(v,j)=f_k(v)$ for $j \in \{1,\dots,a\}$.  With a little
  abuse of notation we write by $\wt f_k$ also the corresponding
  function on the $a$-th frame member $\wGr F_a=\wGr F_a(\wG ,V_0)$,
  which is naturally defined since the functions have the same value
  on the contracted vertices $V_0$. The proof of the multiset
  inclusion $\bigspec{\wG}\subset\bigspec{\wGr F_a(\wG ,V_0)}$ is then
  a special case of the first part of the proof in
  \Thm{spec.contr.frame.union} (choose $A=\lMset a\rMset$ in the
  construction of the symmetric functions).

  To show that also the eigenvalues of $\DirGr \wG {V_0} $ with
  Dirichlet conditions on $V_0$ lift to eigenvalues on the frame
  member $\wGr{F_a}$ put
  $n' = \card{\DirGr \wG {V_0}} = n-\card{V_0}$. Let
  $\lambda_1,\lambda_2,\dots,\lambda_{n'}$ be the eigenvalues of
  $\DirGr \wG {V_0}$ with corresponding orthonormal eigenfunctions
  $h_1,h_2,\dots,h_{n'}$. Moreover, let $\eta \in \C$ be a
  non-trivial $a$-th root of unity, i.e., $\eta\ne1$ and $\eta^a=1$.
  For each such $\eta$ we extend the eigenfunction $h_{k'}$ to an
  eigenfunction $\wt h_{k',\eta}$ onto $\wGr F_a$ by
  \begin{equation*}
    \wt h_{k',\eta}([(v,j)])=
    \eta^j h_{k'}(v).
  \end{equation*}
  Note that this choice is also well-defined on the quotient as
  $h_{k'}(v)=0$ for $v \in V_0$.  We now show that $\wt h_{k',\eta}$
  is an eigenfunction of $\lapl {\wGr F_a}$ with eigenvalue
  $\lambda_{k'}$: If $v \in V_0$, we have $h_{k'}(v)=0$ and
  \begin{align*}
    \bigl(\lapl {\wGr F_a}\wt h_{k',\eta}\bigr)[(v,j)]
    &= 0 - \frac1{\deg_{\G^a}^w[(v,j)]}
      \sum_{e\in E_{[(v,j)]}(\Gr F_a)}
      w_e \e^{\im \alpha_e} \wt h_{k',\eta}(\bd_+e)\\
    &= 0 - \frac 1 {\deg_{\G}^w v}
      \Bigl(\sum_{j=1}^a \eta^j\Bigr)
      \sum_{e\in E_v(\G)} w_e \e^{\im \alpha_e}h_{k'}(\bd_+e)
      =0
      = \lambda_{k'} \wt h_{k',\eta}[(v,j)]
  \end{align*}
  as $\sum_{j=1}^a \eta^j=0$ since $\eta \ne 1$.

  If $v \in V\setminus V_0$, all equivalence classes
  $[(v,j)]$ contain only one element, again denoted by $(v,j)$.  In
  this case, we have
  \begin{align*}
    \bigl(\lapl {\wGr F_a}\wt h_{k',\eta}\bigr)(v,j)
    &= \wt h_{k',\eta}(v,j)
      - \frac1{\deg_{\G^a}^w[(v,j)]}
      \sum_{e\in E_{[(v,j)]}(\Gr F_a)}
      w_e \e^{\im \alpha_e} \wt h_{k',\eta}(\bd_+e)\\
    &= \eta^j\Bigl(h_{k'}(v) - \frac 1 {\deg_\G^w v}
      \sum_{e\in E_v(\G)} w_e \e^{\im \alpha_e}h_{k'}(\bd_+e)\Bigr)\\
    &= \eta^j \lambda_{k'} h_{k'}(v)
      = \lambda_{k'} \wt h_{k',\eta}(v,j).
  \end{align*}
  This shows that $\wt h_{k',\eta}$ is an
  eigenfunction of $\lapl {\wGr F_a}$ with eigenvalue $\lambda_{k'}$ as claimed.

  Next we will show that the constructed eigenfunctions on the frame
  member are mutually orthogonal. First, recall that
  $\sum_{j=1}^a \eta^j=0$ since $\eta \ne 1$.  Therefore, we have
  \begin{equation*}
    \bigiprod[\lsqr{V(\wGr F_a),\deg_{\G^a}^w}]{\wt f_k}{\wt h_{k',\eta}}
    = \Bigl(\sum_{j=1}^a \conj \eta^j\Bigr)
    \bigiprod[\lsqr{V(\wG),\deg_\G^w}]{f_k}{h_{k'}}
    =0.
  \end{equation*}
  Second, the eigenfunctions corresponding to different non-trivial
  roots of unity are also orthogonal since
  \begin{equation*}
    \bigiprod[\lsqr{V(\wGr F_a),\deg_{\G^a}^w}]{\wt h_{k',\eta}}{\wt h_{k',\eta'}}
    = \sum_{j=1}^a \bigiprod[\lsqr{V(\wG),\deg_\G^w}]{\eta^j h_{k'}}{(\eta')^j h_{k'}}
    = \sum_{j=1}^a (\eta \conj \eta')^j
    =0 \;
  \end{equation*}
  Here, $\eta\ne1\ne\eta'$ and $\eta \ne \eta'$, so that the product
  $\eta\eta'$ also determines a non-trivial root of unity. Therefore,
  we have shown that the functions $\wt f_k$ and $\wt h_{k',\eta}$ for
  $k \in \{1,\dots,n\}$, $k' \in \{1,\dots,n'\}$ and $\eta^a=1$ with
  $\eta\ne 1$ are mutually orthogonal.  In particular, we have shown
  that
  $\bigspec{\DirGr \wG {V_0}}^{(a-1)}\subset \bigspec{\wGr F_a(\wG
    ,V_0)}$, the multiplicity $a-1$ coming from the fact that there
  are $a-1$ solutions of $\eta^a=1$ with $\eta \ne 1$. Altogether we
  have $n+(a-1) n'$ mutually orthogonal eigenfunctions and by \Lem{frame.order},
  the order of $\Gr F_a$ is precisely
  \begin{equation*}
    \card{\Gr F_a}
    =(a-1) n' + n,
  \end{equation*}
  hence we have found all eigenvalues.
\end{proof}

We next present several examples that illustrate how different choices
of $V_0$ of the same underlying graph $\wG$ lead to different families
of graphs.  We apply the preceding proposition to determine their
spectra. We will mention examples with a tree as building block (hence
the magnetic potential has no effect) and building blocks with cycles
and a non-trivial magnetic potential.

\begin{remark}[group-theoretical justification]
  \label{rem:dualgroup}
  There is an elegant group-theoretical justification for the
  multiplicities appearing in the spectrum of the magnetic Laplacian
  of a frame member.  In fact, in the proof of \Prp{frame.spec} there
  is a cyclic group $\Z_a$ acting on the vertices of $\wGr F_a$ by
  shifting the label $j$, $j \in \Z_a$, numbering the branches of the
  frame member. This induces naturally an action of $\Z_a$ on the
  corresponding $\ell_2$ space which defines the regular
  representation $U$ of $\Z_a$. Since $\Z_a$ is a finite Abelian group
  the unitary dual satisfies $\widehat \Z_a \cong \Z_a$ and $U$
  decomposes into a direct sum of one-dimensional representations
  which consist of multiplication with $a$-th root of unity (see
  e.g.~\cite[Section~23.27]{hewitt-ross-1}).  The arguments in the
  proof of \Prp{frame.spec} essentially show that the lifted $\wt f$-
  and $\wt h$-eigenfunctions not only reduce the Laplacian but also
  reduce the regular representation $U$.  In fact, one has the
  following decomposition of the Laplacian
  \begin{align*}
    \lapl{\wGr F_a} \cong
    \lapl \wG \oplus
    \bigoplus_{p\in \Z_a \setminus\{0\}} \lapl {\DirGr \wG {V_0}}
  \end{align*}
  explaining the multiplicity $(a-1)$ of the eigenvalues of
  $\lapl {\DirGr \wG {V_0}}$ in the spectrum of the Laplacian on the
  frame $\wGr F_a$.  Note that the first summand arises from the
  trivial representation and recall that the constant function is
  excluded as eigenfunction of the Dirichlet Laplacian due to the
  Dirichlet conditions on $V_0$.
\end{remark}

We start with a simple building block; here a tree, hence any magnetic
potential is cohomologous to $0$.
\begin{example}[complete bipartite graphs]
  \label{ex:complete.bipartite}
  Let $\G=\Gr K_{m,1}$ be the complete bipartite graph with $m+1$
  vertices and $m$ edges (it is also called a \emph{star graph}).  We
  draw $m-1$ of the pendant vertices on the bottom and one vertex on
  top.  Let $V_0$ be the set of vertices of degree $1$ (the bottom and
  top ones).  Again we choose standard weights.  The spectrum of
  $\lapl \G$ and  $\laplDir \G {V_0}$ is given respectively by
  \begin{align*}
    \spec{\Gr K_{m,1}}
    =\lMset 0,1^{(m-1)}, 2 \rMset
    \quadtext{and}
    \bigspec{\DirGr {(\Gr K_{m,1})}{V_0}}
    = \lMset 1 \rMset.
  \end{align*}
  Note that the frame member $\Gr F_a=\Gr F_a(\G,V_0)$ is
  obtained from the complete bipartite graph $\Gr K_{m-1,a}$ by
  decorating each of the $a$ vertices on top with a pendant vertex,
  and that the pendant $a$ vertices are all identified into a single
  vertex.
  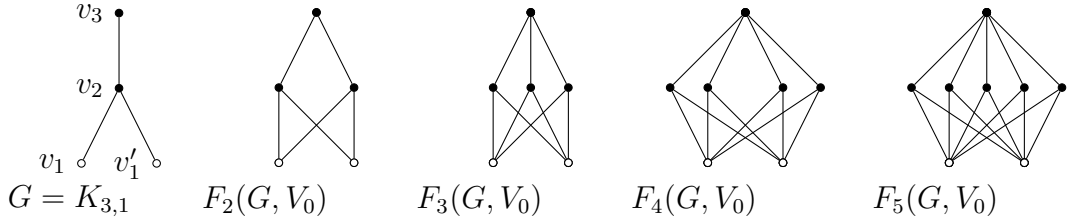
\begin{figure}[h]
    \centering
    {
      \begin{tikzpicture}[auto,vertex/.style={circle,draw=black!100,%
          fill=black!100, 
          inner sep=0pt,minimum size=1mm},scale=1]
        \foreach \x in {0}
        { \node(C\x) at (0,1) [vertex,fill=black,label=left:$v_3$] {};
          \node (B\x) at (\x*.5,0) [vertex,label=left:$v_2$] {};
          \path [-](C\x) edge node[right] {} (B\x);
          \node (A\x) at (-0.5,-1) [vertex,fill=white,label=left:$v_1$] {};
          \node (AA\x) at (0.5,-1) [vertex,fill=white,label=left:$v_1'$] {};
          \path [-](B\x) edge node[right] {} (A\x);
          \path [-](B\x) edge node[right] {} (AA\x);
        }
        \draw[] (.3,.-1.5)node[left] {$\G=\Gr K_{3,1}$};
      \end{tikzpicture}\quad
      \begin{tikzpicture}[auto,vertex/.style={circle,draw=black!100,%
          fill=black!100, 
          inner sep=0pt,minimum size=1mm},scale=1]
        \foreach \x in {-1,1}
        { \node(C\x) at (0,1) [vertex,fill=black,label=below:] {};
          \node (B\x) at (\x*.5,0) [vertex,label=below:] {};
          \path [-](C\x) edge node[right] {} (B\x);
          \node (A\x) at (-0.5,-1) [vertex,fill=white,label=below:] {};
          \node (AA\x) at (0.5,-1) [vertex,fill=white,label=left:] {};
          \path [-](B\x) edge node[right] {} (A\x);
          \path [-](B\x) edge node[right] {} (AA\x);
        }
        \draw[] (.3,.-1.5)node[left] {$\Gr F_2(\G,V_0)$};
      \end{tikzpicture}\quad
      \begin{tikzpicture}[auto,vertex/.style={circle,draw=black!100,%
          fill=black!100, 
          inner sep=0pt,minimum size=1mm},scale=1]
        \foreach \x in {-1,0,1}
        { \node(C\x) at (0,1) [vertex,fill=black,label=below:] {};
          \node (B\x) at (\x*.5,0) [vertex,label=below:] {};
          \path [-](C\x) edge node[right] {} (B\x);
          \node (A\x) at (-0.5,-1) [vertex,fill=white,label=below:] {};
          \node (AA\x) at (0.5,-1) [vertex,fill=white,label=left:] {};
          \path [-](B\x) edge node[right] {} (A\x);
          \path [-](B\x) edge node[right] {} (AA\x);
        }
        \draw[] (.3,.-1.5)node[left]  {$\Gr F_3(\G,V_0)$};
      \end{tikzpicture}\quad
      \begin{tikzpicture}[auto,vertex/.style={circle,draw=black!100,%
          fill=black!100, 
          inner sep=0pt,minimum size=1mm},scale=1]
        \foreach \x in {-2,-1,1,2}
        { \node(C\x) at (0,1) [vertex,fill=black,label=below:] {};
          \node (B\x) at (\x*.5,0) [vertex,label=below:] {};
          \path [-](C\x) edge node[right] {} (B\x);
          \node (A\x) at (-0.5,-1) [vertex,fill=white,label=below:] {};
          \node (AA\x) at (0.5,-1) [vertex,fill=white,label=left:] {};
          \path [-](B\x) edge node[right] {} (A\x);
          \path [-](B\x) edge node[right] {} (AA\x);
        }
        \draw[] (.3,.-1.5)node[left]  {$\Gr F_4(\G,V_0)$};
      \end{tikzpicture}\quad
      \begin{tikzpicture}[auto,vertex/.style={circle,draw=black!100,%
          fill=black!100, 
          inner sep=0pt,minimum size=1mm},scale=1]
        \foreach \x in {-2,-1,0,1,2}
        { \node(C\x) at (0,1) [vertex,fill=black,label=below:] {};
          \node (B\x) at (\x*.5,0) [vertex,label=below:] {};
          \path [-](C\x) edge node[right] {} (B\x);
          \node (A\x) at (-0.5,-1) [vertex,fill=white,label=below:] {};
          \node (AA\x) at (0.5,-1) [vertex,fill=white,label=left:] {};
          \path [-](B\x) edge node[right] {} (A\x);
          \path [-](B\x) edge node[right] {} (AA\x);
        }
        \draw[] (.3,.-1.5)node[left]  {$\Gr F_5(\G,V_0)$};
      \end{tikzpicture}
    }
    \caption{The frame members leading to \emph{complete bipartite graphs}, here for $m=3$.
    \label{fig:complete.bipartite}
  }
  \end{figure}
  \Prp{frame.spec} implies that the spectrum of the standard
  Laplacian on $\Gr F_a$ is given by
  $\spec{\Gr F_a}=\spec {\Gr K_{m,1}} \uplus \spec{\DirGr {(\Gr
      K_{m,1})} {V_0}}^{(a-1)}$, i.e.,
  \begin{align*}
    \bigspec{\Gr F_a(\Gr K_{m,1}, V_0)}
    =\lMset 0,1^{(m+a-2)}, 2 \rMset.
  \end{align*}
  Actually, $\Gr F_a$ is just the complete bipartite graph $K_{m+1,a}$
  if one draws the top vertex also on the bottom.  Nevertheless, we
  draw $\Gr F_a$ in this way as we will use only the $m-1$ bottom
  vertices (outlined) as distinguished ones in
  \Sec{frames-and-partitions}.
\end{example}

\begin{example}[a diamond-like graph with magnetic potential]
  \label{ex:diamond.mag}
  We start with a path graph with three vertices and add a loop with
  magnetic potential to the middle vertex $v_2$.  For
  $\theta \in \RmodZ$, we let $\wG^\theta := (\G,\alpha^\theta,1)$ be
  the magnetic graph with standard weights $(w_e=1$ for all
  $e \in E(\G)$), where $\G$ is the path graph $\Gr P_3$ with a loop
  $e_0$ attached at the middle vertex $v_2$.  We set
  $\alpha_{e_0}^\theta=\theta$ and $\alpha_e^\theta=0$ for the other
  two edges.  In this case, the spectrum of $\wG^\theta$ is given by
  the eigenvalues of
  \begin{equation*}
    \lapl {\wG^\theta}
    \cong
    \begin{pmatrix}
      1    & -1/2 & 0\\
      -1/2 & 1-(\cos \theta)/2 & -1/2\\
      0    & -1/2 & 1
    \end{pmatrix},
  \end{equation*}
  namely we have
  \begin{equation*}
    \spec{\wG^\theta}
    =\Bigl\lMset
    1-\frac{\cos \theta}4 - \frac{\sqrt{\cos^2\theta + 8}}4;
    1;
    1-\frac{\cos \theta}4 + \frac{\sqrt{\cos^2\theta + 8}}4
    \Bigr\rMset.
  \end{equation*}
  Now, we let $V_0=\{v_1,v_3\}$ be the set of the two vertices of
  degree $1$ (see \Fig{diamond-mag})
  \begin{figure}[h]
    \centering
    {
      \begin{tikzpicture}[auto, vertex/.style={circle,draw=black!100,fill=black!100, 
          inner sep=0pt,minimum size=1mm},scale=1]
        \foreach \x in {0}
        { \node(C\x) at (0,1) [vertex,draw=black,fill=white,label=left:$v_3$]{};
          \node (B\x) at (\x*.5,0) [vertex,label=left:$v_2$] {};
          \draw[-] (B\x) to[min distance=8mm,in=45,out=325,looseness=1]  node[right] {} (B\x);
          \path [-](C\x) edge node[right] {} (B\x);
          \node (A\x) at (0,-1) [vertex,fill=white,label=left:$v_1$] {};
          \path [-](B\x) edge node[right] {} (A\x);
	}
	\draw[] (.3,.-1.5)node[left] {$\wG=\wGr F_1^\theta$};
      \end{tikzpicture}
      \begin{tikzpicture}[auto, vertex/.style={circle,draw=black!100,fill=black!100, 
	  inner sep=0pt,minimum size=1mm},scale=1]
        \foreach \x in {1}
	{ \node(C\x) at (0,1) [vertex,draw=black,fill=white,label=below:] {};
          \node (B\x) at (\x*.5,0) [vertex,label=below:] {};
          \draw[-] (B\x) to[min distance=8mm,in=45,out=325,looseness=1]  node[right] {} (B\x);
          \path [-](C\x) edge node[right] {} (B\x);
          \node (A\x) at (0,-1) [vertex,fill=white,label=below:] {};
          \path [-](B\x) edge node[right] {} (A\x);
	}
	\foreach \x in {-1}
	{ \node(C\x) at (0,1) [vertex,draw=black,fill=white,label=below:] {};
          \node (B\x) at (\x*.5,0) [vertex,label=below:] {};
          \draw[-] (B\x) to[min distance=8mm,in=135,out=225,looseness=1]  node[right] {} (B\x);
          \path [-](C\x) edge node[right] {} (B\x);
          \node (A\x) at (0,-1) [vertex,fill=white,label=below:] {};
          \path [-](B\x) edge node[right] {} (A\x);
	}
	\draw[] (.3,.-1.5)node[left] {$\wGr F_2^\theta$};
      \end{tikzpicture}
      \begin{tikzpicture}[auto, vertex/.style={circle,draw=black!100,fill=black!100, 
          inner sep=0pt,minimum size=1mm},scale=1]
	\foreach \x in {0,2}
	{ \node(C\x) at (0,1) [vertex,draw=black,fill=white,label=below:] {};
          \node (B\x) at (\x*.5,0) [vertex,label=below:] {};
          \draw[-] (B\x) to[min distance=8mm,in=45,out=325,looseness=1]  node[right] {} (B\x);
          \path [-](C\x) edge node[right] {} (B\x);
          \node (A\x) at (0,-1) [vertex,fill=white,label=below:] {};
          \path [-](B\x) edge node[right] {} (A\x);
	}
	\foreach \x in {-2}
	{ \node(C\x) at (0,1) [vertex,draw=black,fill=white,label=below:] {};
          \node (B\x) at (\x*.5,0) [vertex,label=below:] {};
          \draw[-] (B\x) to[min distance=8mm,in=135,out=225,looseness=1]  node[right] {} (B\x);
          \path [-](C\x) edge node[right] {} (B\x);
          \node (A\x) at (0,-1) [vertex,fill=white,label=below:] {};
          \path [-](B\x) edge node[right] {} (A\x);
	}
	\draw[] (.3,.-1.5)node[left] {$\wGr F_3^\theta$};
      \end{tikzpicture}
      \begin{tikzpicture}[auto, vertex/.style={circle,draw=black!100,fill=black!100, 
          inner sep=0pt,minimum size=1mm},scale=1]
	\foreach \x in {1,3}
	{ \node(C\x) at (0,1) [vertex,draw=black,fill=white,label=below:] {};
          \node (B\x) at (\x*.5,0) [vertex,label=below:] {};
          \draw[-] (B\x) to[min distance=8mm,in=45,out=325,looseness=1]  node[right] {} (B\x);
          \path [-](C\x) edge node[right] {} (B\x);
          \node (A\x) at (0,-1) [vertex,fill=white,label=below:] {};
          \path [-](B\x) edge node[right] {} (A\x);
	}
	\foreach \x in {-3,-1}
	{ \node(C\x) at (0,1) [vertex,draw=black,fill=white,label=below:] {};
          \node (B\x) at (\x*.5,0) [vertex,label=below:] {};
          \draw[-] (B\x) to[min distance=8mm,in=135,out=225,looseness=1]  node[right] {} (B\x);
          \path [-](C\x) edge node[right] {} (B\x);
          \node (A\x) at (0,-1) [vertex,fill=white,label=below:] {};
          \path [-](B\x) edge node[right] {} (A\x);
	}
	\draw[] (.3,.-1.5)node[left] {$\wGr F_4^\theta$};
	\node[circle,scale=0.3] (D1) at (2.5,0) [vertex,label=below:] {};
	\node[circle,scale=0.3] (D2) at (3,0) [vertex,label=below:] {};
	\node[circle,scale=0.3] (D3) at (3.5,0) [vertex,label=below:] {};
      \end{tikzpicture}
    }
    \caption{The family of decorated diamond magnetic graphs
      $\wGr F_a^\theta = \wGr F_a(\wG^\theta,V_0)$.
      \label{fig:diamond-mag}}
  \end{figure}
  Here, the spectrum of $\DirGr {(\wG^\theta)} {V_0}$ is given by
  \begin{equation*}
    \bigspec{\DirGr {(\wG^\theta)} {V_0}}
    =\Bigl\lMset
       1 - \frac {\cos \theta}2
    \Bigr\rMset.
  \end{equation*}
  The spectrum of the frame member $\wGr F_a^\theta = \wGr F_a(\wG^\theta,V_0)$ is
  hence given by
  \begin{equation*}
    \spec{\wGr F_a^\theta}
    =\Bigl\lMset
    1-\frac{\cos \theta}4 - \frac{\sqrt{(\cos\theta)^2 + 8}}4,
    1,
    1-\frac{\cos \theta}4 + \frac{\sqrt{(\cos\theta)^2 + 8}}4
    \Bigr\rMset
    \uplus
    \Bigl\lMset
       1 - \frac {\cos \theta}2
       \Bigr\rMset^{(a-1)}.
  \end{equation*}
\end{example}

\begin{example}[kite graphs]
   \label{ex:kite.frame}
   Let $\G$ be the complete graph on four vertices with one pendant
   vertex added (so $\G$ has five vertices).  The vertex set $V_0$
   consists now of the pendant vertex and one of the remaining four vertices
   (see \Fig{kite.frame}).
   Let $\wG$ be the corresponding weighted graph with standard
   weights.  We then have
   \begin{align*}
     \spec{\wG}
     = \Bigl\lMset 0, \frac{7-\sqrt 7}6,\frac 43,\frac 43,\frac{7+\sqrt 7}6
     \Bigr\rMset
     \quadtext{and}
      \spec{\DirGr \wG {V_0}}
     = \Bigl\lMset \frac{5-\sqrt 7}6,\frac 43,\frac{5+ \sqrt 7}6 \Bigr\rMset
   \end{align*}
   for the standard Laplacian without magnetic potential. As building
   block $\wG=(\G,\alpha,1)$ one can also choose an arbitrary magnetic
   potential $\map \alpha E \RmodZ$.  Note that as the Betti number of
   the building block is $3$, it can be seen that it is enough to
   consider magnetic potentials only which are supported on three
   edges only, leading to three parameters in $\RmodZ$; any other
   magnetic potential leads to a unitarily equivalent Laplacian by
   \Prpenum{dml}{dml.b}.
\begin{figure}[h]
	\centering
	{
		\begin{tikzpicture}[auto,vertex/.style={circle,draw=black!100,%
				fill=black!100, 
				inner sep=0pt,minimum size=1mm},scale=1]
			\foreach \x in {0}
			{\node(D) at (0,-1) [vertex,fill=white,label=below:$\G$] {};
			 \node(C\x) at (\x,1) [vertex,fill=black,] {};
				\node (B-1\x) at ({\x-.5},2) [vertex,label=left:] {};
				\node (B1\x) at ({\x+.5},2) [vertex,label=right:] {};
				\node (A) at (0,3) [vertex,fill=black] {};

				\path [-](D) edge node[right] {} (C\x);
				\path [-](C\x) edge node[right] {} (B-1\x);
				\path [-](C\x) edge node[right] {} (B1\x);
				\path [-](B-1\x) edge node[right] {} (B1\x);
				\path [-](B-1\x) edge node[right] {} (A);
				\path [-](B1\x) edge node[right] {} (A);
				\path [-](C\x) edge node[right] {} (A);

			}
		\end{tikzpicture}
		\begin{tikzpicture}[auto,vertex/.style={circle,draw=black!100,%
				fill=black!100, 
				inner sep=0pt,minimum size=1mm},scale=1]
			\foreach \x in {1,-1}
			{\node(D) at (0,-1) [vertex,fill=white,label=below:$\Gr F_2$] {};
				\node(C\x) at (\x,1) [vertex,fill=black,] {};
				\node (B-1\x) at ({\x-.25},2) [vertex,label=left:] {};
				\node (B1\x) at ({\x+.25},2) [vertex,label=right:] {};
				\node (A) at (0,3) %
                                [vertex,fill=black] {};
				\path [-](D) edge node[right] {} (C\x);
				\path [-](C\x) edge node[right] {} (B-1\x);
				\path [-](C\x) edge node[right] {} (B1\x);
				\path [-](B-1\x) edge node[right] {} (B1\x);
				\path [-](B-1\x) edge node[right] {} (A);
				\path [-](B1\x) edge node[right] {} (A);
				\path [-](C\x) edge node[right] {} (A);
			}
		\end{tikzpicture}
	\begin{tikzpicture}[auto,vertex/.style={circle,draw=black!100,%
			fill=black!100, 
			inner sep=0pt,minimum size=1mm},scale=1]
		\foreach \x in {1,0,-1}
		{\node(D) at (0,-1) [vertex,fill=white,label=below:$\Gr F_3$] {};
			\node(C\x) at (\x,1) [vertex,fill=black,] {};
			\node (B-1\x) at ({\x-.25},2) [vertex,label=left:] {};
			\node (B1\x) at ({\x+.25},2) [vertex,label=right:] {};
			\node (A) at (0,3) [vertex,fill=black] {};

			\path [-](D) edge node[right] {} (C\x);
			\path [-](C\x) edge node[right] {} (B-1\x);
			\path [-](C\x) edge node[right] {} (B1\x);
			\path [-](B-1\x) edge node[right] {} (B1\x);
			\path [-](B-1\x) edge node[right] {} (A);
			\path [-](B1\x) edge node[right] {} (A);
			\path [-](C\x) edge node[right] {} (A);

		}
	\end{tikzpicture}
	\begin{tikzpicture}[auto,vertex/.style={circle,draw=black!100,%
			fill=black!100, 
			inner sep=0pt,minimum size=1mm},scale=1]
		\foreach \x in {2,1,-1,-2}
		{\node(D) at (0,-1) [vertex,fill=white,label=below:$\Gr F_4$] {};
			\node(C\x) at (\x,1) [vertex,fill=black,] {};
			\node (B-1\x) at ({\x-.25},2) [vertex,label=left:] {};
			\node (B1\x) at ({\x+.25},2) [vertex,label=right:] {};
			\node (A) at (0,3) [vertex,fill=black] {};
			\path [-](D) edge node[right] {} (C\x);
			\path [-](C\x) edge node[right] {} (B-1\x);
			\path [-](C\x) edge node[right] {} (B1\x);
			\path [-](B-1\x) edge node[right] {} (B1\x);
			\path [-](B-1\x) edge node[right] {} (A);
			\path [-](B1\x) edge node[right] {} (A);
			\path [-](C\x) edge node[right] {} (A);

		}
	\end{tikzpicture}
}
    \caption{The frames starting from a so-called \emph{kite} graph.
    \label{fig:kite.frame}}
\end{figure}
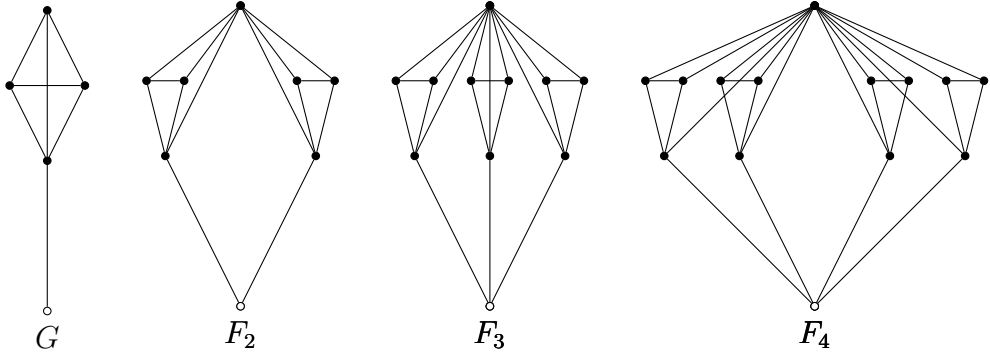
\end{example}


%

\section{Graphs constructed from frames and partitions}
\label{sec:frames-and-partitions}
In this section we present the construction of an infinite collection
of families of magnetic graphs, where all the elements in each family
are isospectral, but non-isomorphic graphs for the magnetic Laplacian
with normalised weights.  Each family consists of finitely many
graphs. Given a frame $(\Gr F_a)_{a \in \N}$ constructed as in the
previous section we will determine these families by assembling the
members of the frame according to different $s$-partitions of the
natural number $r$ (see \Def{partition} and \Sec{motivating-ex}). We
present the construction in two steps.

\subsection{Disjoint frame unions}
\label{subsec:disj.frame.union}

We start with a simple construction of isospectral, but non-isomorphic
graphs.  Note that these graphs are not connected.  The construction
in this subsection is a special case of the one in
\Subsec{contr.frame.union}, namely when $V_1=\emptyset$.

\begin{definition}[disjoint frame unions]
  \label{def:frame.union}
  Let $\wG=(\G,\alpha,w)$ be a magnetic graph (building block) and
  choose a subset $V_0\subset V(\G)$.  Consider the frame
  $(\wGr F_a(\wG,V_0))_{a\in \N}=(\wGr F_a)_{a\in \N}$ as specified in
  \Def{frame} and let $A=\lMset a_1,\dots,a_s\rMset$ be an
  $s$-partition of the natural number $r$.  The \emph{disjoint
    $A$-union of the frame $(\wGr F_a)_{a\in \N}$} is defined as
  \begin{equation}
    \label{eq:frame.union}
    \wGr F_A
    := \wGr F_A(\wG ,V_0)
    :=\bigdcup_{i=1}^s \wGr F_{a_i} \times \{i\}.
  \end{equation}
\end{definition}
Note that disjoint frame unions associated to a partition $A$ can be
similarly defined for building block graphs that have Dirichlet
conditions on some vertices, say $V_0$.  We denote the corresponding
disjoint frame union by $\wGr F_A (\DirGr \wG {V_0},V_0)$.

In fact, in the proof of \Thm{spec.contr.frame.union} we will consider
disjoint $A$-unions of frames constructed from building block graphs
that have Dirichlet conditions on $V_0$.  Note that
$\wGr F_A (\DirGr \wG {V_0},V_0)$ is actually also the disjoint union
of $r$ copies of $\DirGr \wG {V_0}$, but we need the grouping into the
partition members $a \in A$ in the proof of
\Thm{spec.contr.frame.union}.

We need the order and degree multiset of the disjoint union of frames.
\begin{lemma}[order and degree multiset of disjoint frame unions]
  \label{lem:order-fa}
  Let $A$ be an $s$-partition of $r$ and
  consider the disjoint $A$-union of frames $\Gr F_A$ defined before. Its order is given by
  \begin{align*}
    \card{\Gr F_A}
    =\sum_{a \in A} \card{\Gr F_a}
    =r(\card{\G}-\card{V_0})+ s \card {V_0}.
  \end{align*}
  The degree multiset of $\Gr F_A$ is given by
  \begin{equation*}
    \deg \Gr F_A
    =\biguplus_{a\in A} \deg \Gr F_a
    =\biguplus_{v\in V(\G)\setminus V_0} \lMset \deg_\G v\rMset ^{(r)}
    \uplus
    \biguplus_{a\in A}\Bigl(\biguplus_{v_0\in V_0} \lMset a \deg_\G v_0 \rMset\Bigr).
  \end{equation*}
\end{lemma}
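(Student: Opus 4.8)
The plan is to exploit that $\Gr F_A$ is by definition the disjoint union of the $s$ frame members $\Gr F_{a_1},\dots,\Gr F_{a_s}$ (see \Def{frame.union}), together with the elementary fact that both the order and the degree multiset are additive with respect to disjoint unions: the order of a disjoint union is the sum of the orders of its components, while the degree multiset is the multiset union of the degree multisets of the components, since forming a disjoint union neither creates new edges nor identifies vertices. All the per-component data is already supplied by \Lem{frame.order} (specialised to $V_1=\emptyset$).

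First I would compute the order. By additivity and \Lem{frame.order},
\begin{equation*}
  \card{\Gr F_A} = \sum_{a \in A} \card{\Gr F_a} = \sum_{a \in A}\bigl(a(\card \G - \card{V_0}) + \card{V_0}\bigr) = (\card \G - \card{V_0}) \sum_{a \in A} a + s\,\card{V_0},
\end{equation*}
where the sums over $a \in A$ run over the multiset $A$ counted with multiplicity, so that the constant term $\card{V_0}$ is summed over the $s$ parts. Since $A$ is an $s$-partition of $r$ we have $\sum_{a \in A} a = r$, and the stated value $r(\card \G - \card{V_0}) + s\,\card{V_0}$ follows at once.

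Next I would treat the degree multiset in the same spirit. Additivity over the disjoint union gives $\deg \Gr F_A = \biguplus_{a \in A} \deg \Gr F_a$, and inserting the expression for $\deg \Gr F_a$ from \Lem{frame.order} produces
\begin{equation*}
  \deg \Gr F_A = \biguplus_{a \in A}\Bigl( \biguplus_{v \in V(\G) \setminus V_0} \lMset \deg_\G v \rMset^{(a)} \uplus \biguplus_{v_0 \in V_0} \lMset a \deg_\G v_0 \rMset \Bigr).
\end{equation*}
The term indexed by $v_0 \in V_0$ is already in the desired final form, so it only remains to collect the term indexed by $v \in V(\G)\setminus V_0$: for each such fixed $v$, the copies of $\deg_\G v$ contributed by the various parts $a \in A$ accumulate to multiplicity $\sum_{a \in A} a = r$, whence $\biguplus_{a \in A} \lMset \deg_\G v \rMset^{(a)} = \lMset \deg_\G v \rMset^{(r)}$, exactly as claimed.

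The only point requiring any care — and it is the sole (mild) obstacle — is the bookkeeping of multiplicities in the multiset operations: one must consistently treat $A$ as a multiset, so that repeated parts are counted with their multiplicity, both in the sums $\sum_{a \in A}$ (giving $s$ terms and total $r$) and in the accumulation of the exponents in $\lMset \cdot \rMset^{(a)}$. With the multiset conventions of \App{appendix} fixed, both identities are immediate consequences of \Lem{frame.order} and the additivity of order and degree multiset over disjoint unions.
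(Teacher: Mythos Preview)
Your proof is correct and follows essentially the same approach as the paper: both invoke additivity of order and degree multiset over disjoint unions, apply \Lem{frame.order} (with $V_1=\emptyset$) to each $\Gr F_a$, and then use $\sum_{a\in A}a=r$ and $\sum_{a\in A}1=s$ from~\eqref{eq:part.number}. The paper is more terse (it disposes of the degree-multiset claim with ``follows similarly''), whereas you spell out the multiplicity bookkeeping explicitly, but the underlying argument is the same.
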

\begin{proof}
  The first result follows from \Lem{frame.order} since $\Gr F_A$ is a
  disjoint union of frames.  In particular,
  $\card{\Gr F_a}=a(\card {\G}-\card{V_0})+(\card {V_0})$, hence
  \begin{equation*}
    \card{\Gr F_A}
    =\sum_{a\in A} \bigl(a(\card {\G}-\card{V_0})+(\card {V_0}-\card{V_1})\bigr)
    = r(\card {\G}-\card{V_0})+s\card {V_0}\;,
    \qedhere
  \end{equation*}
  where we used~\eqref{eq:part.number}.  The second statement follows
  similarly.
\end{proof}

As the map $A \mapsto \deg \Gr F_A$ is injective, graphs with
different $s$-partitions $A$ and $B$ of $r$ cannot be isomorphic.
Note that in order to have two different $s$-partitions of $r$, we
need $r \ge 4$ and $2 \le s \le r-2$.
\begin{lemma}
  \label{lem:not.isomorphic}
  Let $A$ and $B$ be two different $s$-partitions of $r$ with
  $r \ge 4$ and $2 \le s \le r-2$.  Then the graphs $\Gr F_A$ and $\Gr F_B$
  are not isomorphic.
\end{lemma}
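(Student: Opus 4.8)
The plan is to leverage the degree-multiset computation from \Lem{order-fa} together with the fact established there that the degree multiset is a graph isomorphism invariant (see the remark following \eqref{eq:deg.set}). The key observation is that the degree multiset of $\Gr F_A$ \emph{encodes the partition $A$} in a recoverable way, so that distinct partitions produce distinct degree multisets, which in turn forbids an isomorphism.

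Concretely, I would proceed as follows. First recall from \Lem{order-fa} that
\begin{equation*}
  \deg \Gr F_A
  =\biguplus_{v\in V(\G)\setminus V_0} \lMset \deg_\G v\rMset ^{(r)}
  \uplus
  \biguplus_{a\in A}\Bigl(\biguplus_{v_0\in V_0} \lMset a \deg_\G v_0 \rMset\Bigr).
\end{equation*}
The first summand depends only on $r$ (and on $\G$, $V_0$), hence is identical for $\Gr F_A$ and $\Gr F_B$ whenever $A$ and $B$ are both $s$-partitions of the same $r$. Thus any difference between $\deg \Gr F_A$ and $\deg \Gr F_B$ must come from the second summand, namely from the multiset $\biguplus_{a\in A}\biguplus_{v_0 \in V_0}\lMset a\deg_\G v_0\rMset$. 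Fixing any single vertex $v_0 \in V_0$ of degree $d_0:=\deg_\G v_0 \ge 1$, this contributes the sub-multiset $\lMset a\,d_0 \mid a \in A\rMset$ of scaled partition entries. Since $d_0 \ge 1$ is a fixed positive integer, the map $a \mapsto a\,d_0$ is injective on $\N$, so the multiset $\lMset a\,d_0 \mid a\in A\rMset$ determines and is determined by $A$ itself. Therefore $A \ne B$ as multisets forces $\deg \Gr F_A \ne \deg \Gr F_B$.

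The conclusion then follows immediately: if $\Gr F_A$ and $\Gr F_B$ were isomorphic, their degree multisets would coincide, contradicting the injectivity just established; hence $\Gr F_A$ and $\Gr F_B$ are not isomorphic. The hypotheses $r \ge 4$ and $2 \le s \le r-2$ are exactly what is needed to guarantee that \emph{two} genuinely different $s$-partitions of $r$ exist (e.g.\ for $r=4$, $s=2$ one has $\lMset 1,3\rMset \ne \lMset 2,2\rMset$), so the statement is non-vacuous; they play no further role in the argument.

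I do not anticipate a serious obstacle here, as the result is essentially a bookkeeping consequence of \Lem{order-fa}. The one point requiring mild care is isolating the partition-dependent part of the degree multiset cleanly: one must argue that the partition-\emph{independent} bulk (the degree-$r$-fold contributions from $V(\G)\setminus V_0$ and the uniform scaling structure over all $v_0 \in V_0$) cancels when comparing $A$ and $B$, so that only the scaled-partition data remains to distinguish them. Choosing a single fixed $v_0$ and observing that $a \mapsto a\deg_\G v_0$ is injective (which holds because $\deg_\G v_0 \ge 1$, as $\G$ has no isolated vertices by assumption) makes this precise and completes the proof.
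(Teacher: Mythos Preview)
Your strategy matches the paper's exactly: both invoke the degree-multiset formula of \Lem{order-fa} and the injectivity of $A \mapsto \deg \Gr F_A$; the paper itself just asserts this injectivity in one sentence preceding the lemma, without elaboration.

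Your attempt to justify the injectivity, however, has a gap when $\card{V_0} \ge 2$. After correctly reducing to the second summand $M_A := \biguplus_{a\in A}\biguplus_{v_0\in V_0}\lMset a\deg_\G v_0\rMset$, you ``fix a single $v_0$'' and observe that its contribution $\lMset a\, d_0 : a\in A\rMset$ determines $A$. But $M_A$ is the \emph{combined} multiset over all $v_0\in V_0$, and the contributions from different vertices can overlap (e.g.\ if $V_0$ contains vertices of degrees $2$ and $3$, then $(a,\deg_\G v_0)=(3,2)$ and $(2,3)$ both contribute the entry $6$); one cannot isolate a single $v_0$'s contribution from $M_A$ alone, so the step ``each piece determines $A$, hence the union does'' is a non-sequitur. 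To close the gap, argue instead via the maximum: with $d^*:=\max_{v_0\in V_0}\deg_\G v_0$, the largest element of $M_A$ is $(\max A)\cdot d^*$, and equality $a\deg_\G v_0=(\max A)d^*$ forces $a=\max A$ and $\deg_\G v_0=d^*$, so its multiplicity in $M_A$ is $m_A(\max A)\cdot\card{\{v_0\in V_0:\deg_\G v_0=d^*\}}$. Comparing with $M_B$ recovers $\max A=\max B$ together with its multiplicity; subtract the block $\lMset(\max A)\deg_\G v_0 : v_0\in V_0\rMset^{(m_A(\max A))}$ from both sides and induct on the length $s$ of the partition.
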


As a prelude (or a special case) of the main theorem
(\Thm{spec.contr.frame.union}) we mention next a first family of
isospectral non-isomorphic graphs labelled by different $s$-partitions
of $r$.  Note that these graphs are not connected.

\begin{proposition}
  \label{prp:not-isomorphic}
  The spectrum of $\wGr F_A$ is given by
  \begin{align*}
    \spec{\wGr F_A}=
    \biguplus_{a \in A} \spec{\wGr F_a}
    = \spec{\wG}^{(s)} \uplus \spec{\DirGr \wG {V_0}}^{(r-s)}.
  \end{align*}
  In particular, for two different $s$-partitions $A$ and $B$ of $r$
  with $r \ge 4$ and $s \ge 2$, the graphs $\wGr F_A$ and $\wGr F_B$
  are isospectral, but not isomorphic.
\end{proposition}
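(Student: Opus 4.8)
The plan is to read off both equalities directly from \Prp{frame.spec} together with the multiset arithmetic forced by the defining properties of an $s$-partition, and then to borrow the non-isomorphism from \Lem{not.isomorphic}; there is no genuinely new spectral computation to perform, so the work is purely combinatorial bookkeeping.

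First I would record that a disjoint union of graphs carries a Laplacian that splits as an orthogonal direct sum. By \Def{frame.union} we have $\wGr F_A = \bigdcup_{i=1}^s \wGr F_{a_i} \times \{i\}$, and since no edge joins two distinct frame members, the Hilbert space $\lsqr{V(\wGr F_A),\deg^w}$ decomposes as $\bigoplus_{i=1}^s \lsqr{V(\wGr F_{a_i}),\deg^w}$ and $\lapl{\wGr F_A}$ is block diagonal with respect to this decomposition. Hence its spectrum is the multiset union of the block spectra, which is exactly the first equality $\spec{\wGr F_A} = \biguplus_{a \in A}\spec{\wGr F_a}$.

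Next I would substitute the single-frame formula $\spec{\wGr F_a} = \spec{\wG} \uplus \spec{\DirGr \wG {V_0}}^{(a-1)}$ from \Prp{frame.spec} (the case of empty virtualised set) into each term and separate the two contributions. The factor $\spec{\wG}$ appears once for every element of $A$, and because $A$ is an $s$-partition we have $\card A = s$, so these assemble into $\spec{\wG}^{(s)}$. The Dirichlet factor $\spec{\DirGr \wG {V_0}}$ appears with total multiplicity $\sum_{a \in A}(a - 1) = \bigl(\sum_{a \in A} a\bigr) - \card A = r - s$, using that the parts of $A$ sum to $r$. Collecting the two pieces yields the second equality $\spec{\wGr F_A} = \spec{\wG}^{(s)} \uplus \spec{\DirGr \wG {V_0}}^{(r-s)}$.

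Finally, the decisive observation is that this closed form involves $A$ only through the pair $(r,s)$ and not through the individual parts. Therefore any two $s$-partitions $A$ and $B$ of the same $r$ produce identical spectra as multisets, so $\wGr F_A$ and $\wGr F_B$ are isospectral in the sense of \Def{isospec}; and whenever $r \ge 4$ and $2 \le s \le r-2$ two distinct such partitions exist and the resulting graphs fail to be isomorphic by \Lem{not.isomorphic}. I do not expect a real obstacle here: the only point demanding care is the multiplicity count $\sum_{a\in A}(a-1) = r-s$, together with the observation that isospectrality is vacuous unless two different $s$-partitions actually exist, which is precisely why the range $2 \le s \le r-2$ (equivalently $r \ge 4$ with $s \ge 2$) is imposed.
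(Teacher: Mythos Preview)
Your proof is correct and follows essentially the same approach as the paper: both use the block-diagonal decomposition for the disjoint union, invoke \Prp{frame.spec} together with the partition identities $\sum_{a\in A}1=s$ and $\sum_{a\in A}a=r$ for the multiplicity count, and appeal to \Lem{not.isomorphic} for non-isomorphism. Your version is simply more explicit where the paper is terse.
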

\begin{proof}
  The spectrum of a disjoint union of graphs is the multiset sum of
  its spectra, hence the first equality holds. For the second, we use
  \Prp{frame.spec} and~\eqref{eq:part.number}.  As only $r$ and $s$
  enter in $\spec{\wGr F_A}$, we have
  $\spec{\wGr F_A}=\spec{\wGr F_B}$.  By \Lem{not.isomorphic}, the
  graphs are not isomorphic.
\end{proof}

\subsection{Contracted frame unions}
\label{subsec:contr.frame.union}
Next, we construct contracted frame unions by merging a subset of
distinguished vertices $V_1\subset V_0$.

\begin{definition}[contracted frame union]
  \label{def:contr.frame.union}
  Let $\wG=(\G,\alpha,w)$ be a magnetic graph (building block) and
  choose a subset $V_0\subset V(\G)$.  Consider the frame
  $(\wGr F_a(\wG,V_0))_{a\in \N}=(\wGr F_a)_{a\in \N}$ as specified in
  \Def{frame} and let $A=\lMset a_1,\dots,a_s\rMset$ be an
  $s$-partition of the natural number $r$.  For a subset
  $V_1 \subset V_0$, called the \emph{set of distinguished vertices}, we
  define the \emph{$V_1$-contracted $A$-union of the frame
    $(\wGr F_a)_{a\in \N}$} by
  \begin{equation}
    \label{eq:contr.frame.union}
    \wGr F_{A,V_1}
    := \wGr F_{A,V_1}(\wG,V_0)
    := \quotient[V_1]{\wGr F_A}
    = \quotient[V_1]{\Bigl(\bigdcup_{i=1}^s
        \wGr F_{a_i}\times \{i\}\Bigr)},
  \end{equation}
  where ${\sim}_{V_1}$ contracts the vertices
  $([v_1],i) \in \wGr F_{a_i} \times \{i\}$ ($i \in \{1,\dots,s\}$)
  for each $v_1 \in V_1$ into a single vertex, denoted again by $v_1$ for
  simplicity.
\end{definition}

The next result establishes the order of the graphs constructed
before. It follows directly from the definition.
\begin{lemma}[order and degree multiset of contracted frame
  union]
  \label{lem:order-fa-quot}
  The order of $\Gr F_{A,V_1} = \quotient[V_1]{\Gr F_A}$ is
  \begin{equation*}
    \card{\Gr F_{A,V_1}}
    =r(\card {\G}-\card{V_0}) + s (\card{V_0}-\card{V_1}) + \card{V_1}
  \end{equation*}
  and the degree multiset is
  \begin{equation*}
    \deg \Gr F_{A,V_1}
    = \biguplus_{v\in V(\G)\setminus V_0} \lMset \deg_\G v\rMset ^{(r)}
    \uplus
    \biguplus_{a\in A}
    \Big(\biguplus_{v_0\in V_0 \setminus V_1} \lMset a \deg_\G v_0 \rMset \Bigr)
    \uplus
    \biguplus_{v_1 \in V_1} \lMset r\deg_\G v_1 \rMset.
  \end{equation*}
\end{lemma}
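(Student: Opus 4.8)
The plan is to derive both statements from the corresponding result for the disjoint frame union, \Lem{order-fa}, by analysing the single extra operation that produces $\Gr F_{A,V_1}$ from $\Gr F_A$, namely the contraction of the distinguished vertices $V_1$ across the $s$ branches (see \Def{contr.frame.union}).

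First I would compute the order. By construction $\Gr F_{A,V_1} = \quotient[V_1]{\Gr F_A}$, where the relation ${\sim}_{V_1}$ identifies, for each $v_1 \in V_1$, the $s$ vertices $([v_1],1),\dots,([v_1],s)$ --- one in each branch $\Gr F_{a_i}\times\{i\}$ --- into a single vertex, while fixing all remaining vertices. Thus exactly $s-1$ vertices are removed per distinguished vertex, so by \Def{contraction} the shrinking number of ${\sim}_{V_1}$ is $(s-1)\card{V_1}$. Subtracting this from $\card{\Gr F_A}=r(\card{\G}-\card{V_0})+s\card{V_0}$, supplied by \Lem{order-fa}, gives
\begin{equation*}
  \card{\Gr F_{A,V_1}}
  = \card{\Gr F_A} - (s-1)\card{V_1}
  = r(\card{\G}-\card{V_0}) + s(\card{V_0}-\card{V_1}) + \card{V_1},
\end{equation*}
which is the asserted order.

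For the degree multiset I would track how the degree multiset of $\Gr F_A$ from \Lem{order-fa} is altered by ${\sim}_{V_1}$. Since the contraction removes no edges, any vertex not identified with another retains its degree; this leaves the blocks indexed by $V(\G)\setminus V_0$ (these vertices never lie in $V_1\subset V_0$) and by $V_0\setminus V_1$ exactly as in \Lem{order-fa}. For each $v_1\in V_1$ the union $\Gr F_A$ contains precisely the $s$ vertices of degrees $a_1\deg_\G v_1,\dots,a_s\deg_\G v_1$, one from each branch; they are merged into one vertex whose weighted degree, by the additivity under contraction recorded in \Eq{eq:weight.on.quot}, is
\begin{equation*}
  \sum_{i=1}^{s} a_i \deg_\G v_1
  = \Bigl(\sum_{i=1}^{s} a_i\Bigr)\deg_\G v_1
  = r\,\deg_\G v_1,
\end{equation*}
since $A$ is an $s$-partition of $r$. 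Collecting these over $v_1\in V_1$ yields the final block $\biguplus_{v_1\in V_1}\lMset r\deg_\G v_1\rMset$ and finishes the computation.

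No genuine obstacle arises, as each step is routine bookkeeping built on the earlier lemmata. The only points needing care are to keep the two uses of the symbol $s$ distinct --- throughout, $s$ denotes the length of the partition $A$, whereas the shrinking number of ${\sim}_{V_1}$ is $(s-1)\card{V_1}$ --- and to observe that contraction leaves the edge set untouched, so that the additivity formula \Eq{eq:weight.on.quot} applies verbatim to the merged degrees.
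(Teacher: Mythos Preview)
Your argument is correct and is precisely the natural way to fill in the details; the paper itself gives no proof for this lemma beyond the remark that it ``follows directly from the definition.'' Your use of \Lem{order-fa} together with the shrinking number $(s-1)\card{V_1}$ and the additivity of degrees under contraction from~\eqref{eq:weight.on.quot} is exactly what that remark invites.
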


We now calculate the spectrum of the $V_1$-contracted $A$-union of the
frame $(\wGr F_a)_{a\in \N}$ in terms of its building block $\wG$ and
certain Dirichlet conditions.
\begin{theorem}[Spectrum of contracted frame unions]
  \label{thm:spec.contr.frame.union}
  Let $\wG=(\G,\alpha,w)$ be a magnetic graph, with underlying
  discrete graph $\G=(V,E,\partial)$, $V_0\subset V$, and let
  $(\wGr F_a)_{a\in\N}$ with $\wGr F_a=\wGr F_a(\wG,V_0)$ be a frame
  constructed from the building block $\G$ by identifying vertices
  along $V_0$.  Moreover, let $A$ be an $s$-partition of the natural
  number $r$, and choose a subset of distinguished vertices
  $V_1 \subset V_0$.  Then the spectrum of the normalised Laplacian of
  the $V_1$-contracted $A$-union
  $\wGr F_{A,V_1} = \wGr F_{A,V_1}(\wG,V_0)$
  (cf. \Def{contr.frame.union}) is given by
  \begin{equation*}
    \bigspec{\wGr F_{A,V_1}}
    =\spec{\wG}
    \uplus \bigspec{\DirGr \wG {V_0}}^{(r-s)}
    \uplus \bigspec{\DirGr \wG {V_1}}^{(s-1)} .
  \end{equation*}
\end{theorem}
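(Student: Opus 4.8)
The plan is to produce a complete orthogonal system of eigenfunctions of $\lapl{\wGr F_{A,V_1}}$, organised into three families corresponding to the three multisets on the right-hand side, and then to invoke a dimension count: by \Lem{order-fa-quot} the order of $\wGr F_{A,V_1}$ is $r(\card\G-\card{V_0})+s(\card{V_0}-\card{V_1})+\card{V_1}$, and writing $n=\card\G$ one checks
\[
  n+(r-s)(n-\card{V_0})+(s-1)(n-\card{V_1})=r(n-\card{V_0})+s(\card{V_0}-\card{V_1})+\card{V_1}.
\]
Hence, once the three families are shown to consist of mutually orthogonal eigenfunctions with the asserted eigenvalues, their cardinalities already exhaust $\lsqr{V(\wGr F_{A,V_1}),\deg^w}$, and the spectrum is read off.

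The first two families reproduce, over the disjoint union of \eqref{eq:contr.frame.union}, the two mechanisms from the proof of \Prp{frame.spec}. For each eigenfunction $f$ of $\lapl\wG$ I take its globally symmetric extension $\wt f$, assigning $f(v)$ to every copy of $v$ in every branch of every frame member; this is well defined on the quotient because it agrees across the contracted sets $V_0$ and $V_1$, and the degree rescaling at the contracted vertices ($a_i\deg_\G v$ on $V_0\setminus V_1$, resp.\ $r\deg_\G v$ on $V_1$) cancels against the correspondingly multiplied edge sums, so $\wt f$ is an eigenfunction with the same eigenvalue; a weight count gives $\bigiprod{\wt f}{\wt f'}=r\bigiprod{f}{f'}$. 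This realises $\spec\wG$. For the second family I lift, separately on each member $\wGr F_{a_i}$ and exactly as in \Prp{frame.spec}, the eigenfunctions of $\DirGr\wG{V_0}$ using the $a_i-1$ nontrivial $a_i$-th roots of unity. These functions vanish on $V_0\supset V_1$, hence survive the $V_1$-contraction and are supported on a single member, and their branch factor $\eta^j$ makes them orthogonal to every symmetric function since $\sum_j\eta^j=0$. Summing over the partition, $\sum_{a\in A}(a-1)=r-s$, which yields $\bigspec{\DirGr\wG{V_0}}^{(r-s)}$.

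The crux is the third family, encoding the outer symmetry across the $s$ members glued along $V_1$. For an eigenfunction $g$ of $\DirGr\wG{V_1}$ with eigenvalue $\mu$ (extended by $0$ on $V_1$), let $\wt g_i$ be its symmetric extension to $\wGr F_{a_i}$, extended by $0$ to the other members. Each $\wt g_i$ vanishes on $V_1$ and, by the local computation used for the first family, already satisfies the eigenvalue equation at every vertex \emph{except} possibly at the shared vertices $v_1\in V_1$. There the defect of $\Phi_c:=\sum_{i=1}^s c_i\wt g_i$ equals $-\tfrac{1}{r\deg_\G v_1}\bigl(\sum_{i=1}^s a_i c_i\bigr)\sum_{e\in E_{v_1}(\G)}w_e\e^{\im\alpha_e}g(\bd_+e)$, so the single linear condition $\sum_{i=1}^s a_i c_i=0$ makes $\Phi_c$ an eigenfunction with eigenvalue $\mu$. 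The point I expect to matter most is that this is \emph{the same} condition as orthogonality to the symmetric family: a weight count gives $\bigiprod{\Phi_c}{\wt f}=\bigl(\sum_i a_i c_i\bigr)\bigiprod{g}{f}$. Thus the admissible coefficients form the hyperplane $H=\{\,c:\sum_i a_i c_i=0\,\}$, which is precisely the orthogonal complement, for the positive-definite form $\iprod[a]{c}{c'}:=\sum_i a_i c_i\conj{c'_i}$, of the all-ones vector that would recover a symmetric function. Choosing an $\iprod[a]{\cdot}{\cdot}$-orthonormal basis of $H$ and letting $g$ range over an orthonormal eigenbasis of $\DirGr\wG{V_1}$ produces $(s-1)(n-\card{V_1})$ eigenfunctions realising $\bigspec{\DirGr\wG{V_1}}^{(s-1)}$.

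It then remains to assemble the orthogonality bookkeeping. Functions living on different frame members are automatically orthogonal, since the members meet only along $V_1$ where the relevant functions vanish, so every cross inner product collapses to a single member. On one member the second family (factor $\eta^j$) is orthogonal to the symmetric families one and three via $\sum_j\eta^j=0$; within the third family the pairing is $\delta_{kl}\sum_i a_i c_i\conj{c'_i}=\delta_{kl}\iprod[a]{c}{c'}$, so distinct eigenfunctions $g_k$ give orthogonal contributions and the chosen basis of $H$ handles the rest; and $\bigiprod{\wt f}{\wt f'}=r\bigiprod{f}{f'}$ takes care of the first family. With all three families mutually orthogonal and their cardinalities summing to $\card{\wGr F_{A,V_1}}$, the constructed eigenfunctions form a complete set, which gives the claimed multiset identity.
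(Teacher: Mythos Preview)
Your argument is correct and shares the paper's overall architecture: three mutually orthogonal eigenfamilies (symmetric lifts realising $\spec\wG$; branch-twisted lifts of $\spec{\DirGr\wG{V_0}}$ on each member via \Prp{frame.spec}; a third family for $\spec{\DirGr\wG{V_1}}$), closed off by the dimension count from \Lem{order-fa-quot}. The substantive difference is in the third family. The paper takes $c_i=\eps^i$ for a nontrivial $s$-th root of unity and invokes $\sum_i\eps^i=0$; but as your defect computation at $v_1\in V_1$ makes explicit, both the eigenvalue equation there and the inner product against the symmetric lifts $\wt f$ carry the \emph{weighted} sum $\sum_i a_i c_i$, not $\sum_i c_i$. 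For a generic partition (already $A=\lMset 1,3\rMset$, $\eps=-1$, where $\sum_i a_i\eps^i=\pm 2$) the paper's functions $\wt g_{k',\eps}$ are neither eigenfunctions at $V_1$ nor orthogonal to the first family. Your choice of the hyperplane $H=\{c:\sum_i a_i c_i=0\}$, orthonormalised for the form $\iprod[a]{c}{c'}=\sum_i a_i c_i\conj{c'_i}$, is exactly what is needed: the single constraint simultaneously enforces the eigenvalue equation at $V_1$ and orthogonality to the $\wt f$'s, and the pairing identity $\bigiprod{\Phi_c}{\Phi_{c'}}=\iprod[a]{c}{c'}\,\bigiprod{g}{g'}$ handles the internal orthogonality. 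In effect you keep the paper's scaffolding while repairing a genuine gap in its third step; the root-of-unity ansatz works verbatim only when all $a_i$ coincide.
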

\begin{proof}
  The graphs given in~\eqref{eq:contr.frame.union} come from two
  successive vertex contractions; we hence denote a generic vertex by
  \begin{equation*}
    \wt v =[([(v,j)],i)] \in V(\Gr F_{A,V_1})
    \qquadtext{for}
    j \in \{1,\dots,a_i\}
    \quadtext{and}
    i \in \{1,\dots,s\}\;,
  \end{equation*}
  where $v\in V$ is a vertex in the building block graph $\G$, $j$
  labels the branch in the frame member $F_{a_i}$ and $i$ numerates
  the frame member determined by the partition
  $A=\lMset a_1,\dots, a_s\rMset$.

  \myparagraph{Symmetric eigenfunctions.}  We start with the
  ``symmetric'' functions obtained from eigenfunctions of $\wG$
  extended symmetrically to $\Gr F_{A,V_1}$: Let $n=\card \wG$
  and denote by $\{f_1,f_2,\dots,f_n\}$ the orthonormal eigenfunctions
  of $\lapl {\wG}$ with eigenvalues $\{\rho_1,\dots,\rho_n\}$,
  respectively. For each $k \in \{1,\dots, n\}$, we extend the
  eigenfunction $f_k$ onto the disjoint union $\wGr F_A$ to a function
  \begin{equation*}
    \map{\wt f_k}{V(\wGr F_A)}{\C}
    \qquadtext{by}
    \wt f_k(\wt v)=f_k(v).
  \end{equation*}
  With a little abuse of notation we write by $\wt f_k$ also the
  corresponding function on the quotient $\wGr F_{A,V_1}$, which
  is well-defined since $\wt f$ takes the same value on all contracted
  vertices.  We show that $\wt f_k$ is an eigenfunction of
  $\lapl {\wGr F_{A,V_1}}$ with the same eigenvalue $\rho_k$
  considering three different cases for the vertex $v$.

  If $v \in V_1$, then the equivalence class $\wt v$ consists of
  $r=\sum_{i=1}^s a_i$ vertices all contracted into a single vertex,
  while the adjacent edges remain.  In particular,
  $\deg_{\Gr F_{A,V_1}}^w (\wt v) = r \deg_\G v$ which gives
  \begin{align*}
    \bigl(\lapl {\wGr F_{A,V_1}} \wt f_k\bigr)(\wt v)
    &= \wt f_k(\wt v)
      - \frac1{\deg_{\Gr F_{A,V_1}}^w(\wt v)}
      \sum_{e\in E_{\wt v}(\Gr F_{A,V_1})}
      w_e \e^{\im \alpha_e} \wt f_k(\bd_+e)\\
    &= f_k(v) - \frac 1 {r \deg_\G ^w (v)}
       \cdot r \sum_{e\in E_v(\G)} w_e \e^{\im \alpha_e}f_k(\bd_+e)\\
    &=  \bigl(\lapl {\wG}f_k\bigr)(v)=\rho_k f_k(v)
    =\rho_k \wt f_k (\wt v).
  \end{align*}
  If $v \in V_0 \setminus V_1$ the equivalence class
  $\wt v=[([(v,j)],i)]$ consists of $a_i$ elements, denoted for
  simplicity by $(v,i)$.  Moreover, we have
  $\deg_{\Gr F_{A,V_1}} (v,i) = a_i \deg_\G^w v$, and hence
   \begin{align*}
    \bigl(\lapl {\wGr F_{A,V_1}} \wt f_k\bigr)(v,i)
    &= \wt f_k(v,i)
      - \frac1{\deg_{\Gr F_{A,V_1}}^w (v,i)}
      = \sum_{e\in E_{(v,i)}(\wt{\Gr F}_{A,V_1})}
         w_e \e^{\im \alpha_e} \wt f_k(\bd_+e)\\
    &= f_k(v) - \frac 1 {a_i \deg_\G ^w v_0}
      \cdot a_i \sum_{e\in E_v(\G)} w_e \e^{\im \alpha_e}f_k(\bd_+e)\\
    &=  \bigl(\lapl {\wG}f_k\bigr)(v)
      =\rho_k f_k(v)
      =\rho_k \wt f_k (v,i).
  \end{align*}
  Finally, if $v \in V \setminus V_0$, the equivalence class
  $\wt v=[([(v,j)],i)]$ has just one element which is denoted simply
  by $(v,j,i)$. In this case we have
  $\deg_{\Gr F_{A,V_1}}^w(v,j,i)=\deg_\G^w v$ which gives
  \begin{align*}
    \bigl(\lapl {\wGr F_{A,V_1}} \wt f_k\bigr)(v,j,i)
    &= \wt f_k(v,j,i)-\frac1{\deg_{\Gr F_{A,V_1}}^w(v,j,i)}
      \sum_{e\in E_{(v,j,i)}(\Gr F_{A,V_1})}
         w_e \e^{\im \alpha_e}\wt f_k(\bd_+e)\\
    &= f_k(v)-\frac1{\deg_\G^w v} \sum_{e\in E_v(\G)}
         w_e \e^{\im \alpha_e}f_k(\bd_+e)\\
    &=  \bigl(\lapl \wG f_k\bigr)(v)
      =\rho_k f_k(v)
      = \rho_k \wt f_k (v,j,i).
  \end{align*}
  This shows that $\spec{\wG}$ is contained in
  $\bigspec{\wGr F_{A,V_1}}$ as a multiset.

  \myparagraph{Eigenfunctions with Dirichlet conditions on $V_1$.}
  Dirichlet conditions on $V_1$ will disconnect the graph
  $\wGr F_{A,V_1}$ into $s$ frame member components determined by the
  partition $A$. Let
  $n'= \card{\DirGr \wG {V_1}}=\card V - \card{V_1}$ and denote by
  $\{g_1, \dots, g_{n'}\}$ the eigenfunctions of
  $\lapl {\DirGr \wG {V_1}}$ with eigenvalues
  $\{\mu_1,\dots, \mu_{n'}\}$. We will lift the eigenfunctions
  $g_{k'}$ to eigenfunctions of $\lapl {\wGr F_{A,V_1}}$ supported on
  each frame member $F_{a_i}$. Denote now by $\eps$ a non-trivial
  $s$-th root of unity, i.e., $\eps\not=1$ and $\eps^{s}=1$. For
  $\wt v=[([(v,j)],i)]$ we define
  \begin{equation*}
    \wt g_{k',\eps}(\wt v)=
    \eps^i g_{k'}(v).
  \end{equation*}
  Note that now the function value of a vertex $[(v,j)]$ on
  $\wGr F_{a_i}$ is the same for all $j \in \{1,\dots,a_i\}$ as the
  root of unity takes the same value on each frame member.  It can be
  seen similarly as before (and as in the proof of \Prp{frame.spec})
  that
  \begin{align*}
    \bigl(\lapl {\wGr F_{A,V_1}} \wt g_{k',\eps}\bigr)(\wt v)
    = \mu_{k'} \wt g_{k',\eps}(\wt v)
  \end{align*}
  for all $\wt v \in V(\wGr F_{A,V_1})$. Moreover, since $\eps \ne 1$
  we have $\sum_{i=1}^s \eps^i=0$ and, therefore, the functions
  $\wt f_k$ and ${\wt g_{k',\eps}}$ are mutually orthogonal, as we
  have
  \begin{equation*}
    \bigiprod[\lsqr{V(\wGr F_{A,V_1}),\deg^{w}}]{\wt f_k}{\wt g_{k',\eps}}
    = \sum_{i=1}^s \bigiprod[\lsqr{V(\wGr F_{a_i}),\deg^w}]{f_k}{\eps^i g_{k'}}
    =0.
  \end{equation*}
  In particular, the eigenfunctions $\wt f_k$ are orthogonal to the
  eigenfunctions $\wt g_{k',\eps}$ with $k=1,\dots,n$,
  $k'=1,\dots,n'$ and $\eps\ne 1$ with $\eps^s=1$.

  \myparagraph{Eigenfunctions with Dirichlet conditions on $V_0$.}
  Since $V_1\subset V_0$ we have by construction that eigenfunctions
  on $\wGr F_{A,V_1}$ with Dirichlet conditions on $V_0$ are
  eigenfunctions of the disjoint union
  $\wGr F_A(\DirGr \wG {{V_0}},V_0)$ (see \Def{frame.union}). In fact,
  we have
  \begin{equation*}
    \DirGr {(\wGr F_{A,V_1}(\wG,V_0))}{V_0}
    = \wGr F_A(\DirGr \wG {V_0},V_0).
  \end{equation*}
  Exploiting the symmetry of the frame members we will construct next
  a family of eigenfunctions which are lifted from eigenfunctions of
  the Laplacian on $\wG$ with Dirichlet conditions on $V_0$ and which
  are orthogonal to the symmetric eigenfunctions $\wt f_k$ constructed
  in the first step.  Let
  $n''=\card{\DirGr \wG {V_0}} =\card V - \card{V_0 }$ and denote by
  $\{h_1, \dots, h_{n''}\}$ the eigenfunctions of
  $\lapl {\DirGr \wG {V_0}}$ with eigenvalues
  $\{\lambda_1,\dots, \lambda_{n''}\}$. For each $i\in\{1,\dots,s\}$ we
  will lift the eigenfunctions $h_k$ to eigenfunctions of
  $\lapl {\wGr F_{A,V_1}}$ supported on each frame member
  $F_{a_i}$. Denote by $\eta$ a non-trivial $a_i$-th root of unity,
  i.e., $\eta\not=1$ and $\eta^{a_i}=1$. For $\wt v=[([(v,j)],i)]$ we
  define
  \begin{equation}\label{eq:def.h}
   \wt h_{k'',\eta,i}(\wt v)= \eta^j h_{k''}(v)
  \end{equation}
  and extend by $0$ if the label of the frame member is different from
  $i$. Since the functions $\wt h_{k'',\eta, i}$ are supported on each
  frame member the proof that the preceding functions are
  eigenfunctions of $\lapl {\wGr F_{A,V_1}}$ with eigenvalue
  $\lambda_{k''}$ can be reduced to the analysis on each frame
  $\wGr F_{a_i}$.  This was shown in the proof of \Prp{frame.spec},
  and the inclusion of multisets
  $\bigspec{\DirGr \wG {V_0}}^{(a_i-1)}\subset \bigspec{\wGr F_{a_i}(
    \DirGr \wG {V_0})}$ follows.  Since this inclusion holds for each
  $i\in\{1,\dots,s\}$ and since $a_1+\dots +a_s=r$ we conclude that
  \begin{align*}
    \bigspec{\DirGr \wG {V_0}}^{(r-s)}\subset \bigspec{\wGr F_{A,V_1}}.
  \end{align*}
  As in the proof of \Prp{frame.spec} we also see that the functions
  $\wt f_k$ and $\wt h_{k'',\eta,i}$ are mutually orthogonal since the
  functions $\wt h$ are supported on each frame member. Moreover, the
  eigenfunctions $\wt g_{k',\eps}$ constructed in the second step and
  $\wt h_{k'',\eta,i}$ turn out
  to be also mutually orthogonal with a similar computation with the root of
  unity as in the second step. This shows that
  \begin{align*}
    \bigspec{\DirGr \wG {V_1}}^{(s-1)}\subset \bigspec{\wGr F_{A,V_1}}.
  \end{align*}

  Finally, we have to check that we identified all eigenvalues.
  In fact, we have identified $\card \G$ symmetric eigenfunctions and
  \begin{equation*}
    \sum_{a \in A} (a-1)(\card{\wG}-\card{V_0})
    =(r-s)(\card{\wG}-\card{V_0})
  \end{equation*}
  eigenfunctions from $\DirGr \wG {V_0}$.  Moreover, the remaining
  eigenfunctions are $\wt g_{k',\eps}$ for
  $k' \in \{1,\dots, \card \G - \card{V_1}\}$ and $\eps^s=1$ with
  $\eps\ne1$ determine $(s-1)(\card \G-\card{V_1})$ additional eigenvalues.
  Altogether, we have specified
  \begin{equation*}
    \card \G
    + (r-s)(\card{\wG}-\card{V_0})
    + (s-1)(\card \G-\card{V_1})
    =r(\card{\wG}-\card{V_0}) + s(\card{V_0}-\card{V_1}) + \card{V_1}
  \end{equation*}
  eigenvalues corresponding to a mutually orthogonal set of
  eigenfunctions.  Since, according to \Lem{order-fa-quot}, the order
  of $\Gr F_{A,V_1}$ is precisely
  $r(\card{\wG}-\card{V_0}) + s(\card{V_0}-\card{V_1}) + \card{V_1}$
  we conclude that we found all eigenvalues and the spectrum of the
  magnetic Laplacian is determined.
\end{proof}

We can now formulate our main result on the construction of
isospectral, non-isomorphic and, now, connected graphs.
\begin{theorem}[main theorem]
  \label{thm:main}
  Let $\wG=(\G,\alpha,w)$ be a magnetic graph, $V_0\subset V(\G)$ and
  let $V_1 \subset V_0$ be a set of distinguished vertices.  For any
  different pair $A$ and $B$ of $s$-partitions of a natural number
  $r \ge 4$ the graphs $\wGr F_{A,V_1}$ and $\wGr F_{B,V_1}$ (the
  $V_1$-contracted $A$- respectively\ $B$-union of
  $(\wGr F_a(\wG,V_0))_{a\in \N}$, cf.\ \Def{contr.frame.union}) are
  isospectral and not isomorphic.
\end{theorem}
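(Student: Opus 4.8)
The plan is to reduce Theorem~\ref{thm:main} directly to the two results already established, namely \Thm{spec.contr.frame.union} and \Lem{not.isomorphic}. Isospectrality and non-isomorphism are proved separately, and both pieces are short given the earlier work.

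\textbf{Isospectrality.} First I would invoke \Thm{spec.contr.frame.union} to obtain the closed form of the spectrum for each contracted frame union. For the $s$-partition $A$ we have
\begin{equation*}
  \bigspec{\wGr F_{A,V_1}}
  = \spec{\wG}
  \uplus \bigspec{\DirGr \wG {V_0}}^{(r-s)}
  \uplus \bigspec{\DirGr \wG {V_1}}^{(s-1)},
\end{equation*}
and the identical expression holds for $B$. The crucial observation is that the right-hand side depends only on the building block $\wG$, the chosen vertex sets $V_0$ and $V_1$, and the two integers $r$ and $s$ — it does \emph{not} depend on the particular multiset of summands $a_1,\dots,a_s$. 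Since $A$ and $B$ are both $s$-partitions of the same number $r$, they share the same values of $r$ and $s$, and therefore the two spectra coincide as multisets. Hence $\wGr F_{A,V_1}$ and $\wGr F_{B,V_1}$ are isospectral.

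\textbf{Non-isomorphism.} For the second claim I would appeal to \Lem{order-fa-quot}, which records the degree multiset of $\wGr F_{A,V_1}$. The partition-dependent part of that multiset is $\biguplus_{a\in A}\bigl(\biguplus_{v_0\in V_0\setminus V_1}\lMset a\deg_\G v_0\rMset\bigr)$; the remaining contributions depend only on $r$ and the fixed data. As in \Lem{not.isomorphic}, the assignment $A\mapsto\deg\wGr F_{A,V_1}$ is injective, because from the degree multiset one can recover the multiset $A$ (for instance, after fixing a vertex $v_0\in V_0\setminus V_1$ one reads off the scaled values $a\deg_\G v_0$). Since the degree multiset is a graph invariant, two graphs with different degree multisets cannot be isomorphic. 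The hypotheses $r\ge 4$ and $2\le s\le r-2$ (which are exactly the conditions guaranteeing that two \emph{distinct} $s$-partitions of $r$ exist, as noted after \Lem{order-fa}) ensure that the pair $A\ne B$ required by the statement is available, and for such a pair the degree multisets differ. Therefore $\wGr F_{A,V_1}$ and $\wGr F_{B,V_1}$ are not isomorphic.

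The only point requiring a little care — and the closest thing to an obstacle — is the non-isomorphism argument, since one must confirm that the distinguishing feature genuinely survives the $V_1$-contraction. Concretely, one needs $V_0\setminus V_1\ne\emptyset$ so that the partition-dependent degrees $\{a\deg_\G v_0 : a\in A\}$ actually appear in $\deg\wGr F_{A,V_1}$; this is exactly the setting in the motivating examples, where $V_1=\{v_1\}$ is a proper subset of $V_0=\{v_1,v_3\}$. Under this mild genericity assumption the recovery of $A$ from the degree multiset goes through verbatim as in \Lem{not.isomorphic}, and the proof is complete. I would close by remarking that these graphs are now connected (in contrast to the disjoint unions of \Prp{not-isomorphic}), precisely because the contraction along $V_1$ joins the $s$ frame members into a single component.
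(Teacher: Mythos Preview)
Your proposal is correct and follows essentially the same approach as the paper: isospectrality is read off directly from \Thm{spec.contr.frame.union} (the spectral formula depends only on $r$ and $s$), and non-isomorphism is obtained from the degree multiset computed in \Lem{order-fa-quot}. Your added observation that one needs $V_0\setminus V_1\ne\emptyset$ for the partition to be recoverable from the degree multiset is a genuine point that the paper's proof passes over silently; the paper only makes this restriction explicit later, in \Ex{all.distingushed.vx}, where it notes that $V_1=V_0$ collapses everything to $\wGr F_r$ and must be excluded.
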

\begin{proof}
  In the calculation of the spectrum in \Thm{spec.contr.frame.union},
  only the natural number $r$ and the length $s$ of the partition are
  relevant and not the concrete partitions $A$ and $B$.  Therefore
  different $s$-partitions of $r$ lead to isospectral graphs.
  Moreover, the fact that two graphs determined by different
  partitions are not isomorphic follows from \Lem{order-fa-quot} since
  the corresponding degree multisets are different.
\end{proof}

\begin{corollary}[isospectral equilateral metric graphs]
  \label{cor:main}
  Let $\G$ be a discrete graph (with standard weights),
  $V_0\subset V(\G)$ and let $V_1 \subset V_0$ be a set of
  distinguished vertices.  For any different pair $A$ and $B$ of
  $s$-partitions of a natural number $r \ge 4$ the corresponding
  equilateral metric graphs $\mGr F_{A,V_1}$ and $\mGr F_{B,V_1}$
  constructed according to the corresponding discrete graphs are
  isospectral and not isomorphic.
\end{corollary}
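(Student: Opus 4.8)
The plan is to reduce both assertions to the discrete statements already proved, using \Prp{met.gr.isospec} as the bridge. Since $\G$ carries standard weights and no magnetic potential, I may apply \Thm{main} to the magnetic graph $\wG=(\G,0,\1)$: it gives that $\Gr F_{A,V_1}$ and $\Gr F_{B,V_1}$ are isospectral for the discrete standard Laplacian and are not isomorphic as discrete graphs. The metric-level claims should then follow by transporting these two facts through the equivalence in \Prp{met.gr.isospec}.

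First I would check the edge-count hypothesis of \Prp{met.gr.isospec}, the one nontrivial ingredient for the isospectrality half. By \Lem{frame.order} each frame member $\Gr F_a$ has exactly $a\,\card{E(\G)}$ edges, so the disjoint frame union $\Gr F_A$ has $\sum_{a\in A}a\,\card{E(\G)}=r\,\card{E(\G)}$ edges, using $\sum_{a\in A}a=r$; passing to the $V_1$-contraction (\Def{contr.frame.union}) neither creates nor deletes edges. Thus $\card{E(\Gr F_{A,V_1})}=r\,\card{E(\G)}=\card{E(\Gr F_{B,V_1})}$, a quantity depending only on $r$. Combining this with the discrete isospectrality of \Thm{main}, hypothesis~(a) of \Prp{met.gr.isospec} holds, and its equivalent condition~(b) yields that the Kirchhoff Laplacians on the equilateral metric graphs $\mGr F_{A,V_1}$ and $\mGr F_{B,V_1}$ are isospectral.

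For the non-isomorphism I would pass through the reduced weighted graph: an isomorphism of equilateral metric graphs is in particular an isometry of the underlying metric spaces, which induces an isomorphism of the graphs obtained by suppressing all vertices of degree $2$ and recording the lengths of the resulting maximal paths as edge weights. It then suffices to recover the partition from this reduced structure, and the block $\biguplus_{a\in A}\biguplus_{v_0\in V_0\setminus V_1}\lMset a\deg_\G v_0\rMset$ of the degree multiset (\Lem{order-fa-quot}) distinguishes $A$ from $B$ exactly as in the proof of \Thm{main}. The step I expect to require the most care is precisely checking that this invariant survives the suppression of degree-$2$ vertices, the only mechanism an isometry of metric spaces cannot detect combinatorially; such vertices arise only from $v_0\in V_0\setminus V_1$ with $a\deg_\G v_0=2$, and each is replaced by a maximal path whose length is itself an isometry invariant. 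Hence the partition can still be read off — branching vertices ($\deg\neq2$) by their degree and the degree-$2$ contributions by the path lengths — so the reduced weighted graphs differ whenever $A\neq B$ and the metric graphs are not isometric. (If metric-graph isomorphism is taken in the combinatorial sense of an incidence- and length-preserving bijection of vertices and edges, this non-isomorphism is immediate from \Thm{main}.)
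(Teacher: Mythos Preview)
Your argument is correct and matches the paper's approach exactly: the paper's proof is the one-line ``follows from \Thm{main} and \Prp{met.gr.isospec}, as in our construction $\Gr F_{A,V_1}$ and $\Gr F_{B,V_1}$ have the same number of edges,'' and your first two paragraphs spell this out with the explicit edge count $r\,\card{E(\G)}$ via \Lem{frame.order}. Your third paragraph on recovering the partition under metric-space isometry (suppressing degree-$2$ vertices) goes beyond what the paper does---the paper simply inherits non-isomorphism from the discrete graphs, implicitly taking metric-graph isomorphism in the combinatorial sense you note parenthetically---so that extra discussion is not needed here, and your claim that degree-$2$ vertices ``arise only from $v_0\in V_0\setminus V_1$'' is not quite right (they can also come from $v\in V\setminus V_0$ with $\deg_\G v=2$), though this does not affect the core argument.
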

\begin{proof}
  The proof follows from \Thm{main} and \Prp{met.gr.isospec}, as in
  our construction, $\Gr F_{A,V_1}$ and $\Gr F_{B,V_1}$ have the same
  number of edges.
\end{proof}

%
%
\section{Examples of isospectral magnetic graphs}
\label{sec:examples}
%
In this section we give more examples of isospectral magnetic graphs
constructed as contracted frame union from various building blocks.
Let $\wG=(\G,\alpha,w)$ be such a general building block, i.e., a
discrete weighted magnetic graph.  From $\wG$, we construct a frame
$(\wGr F_a)_{a \in \N}$, where the frame members
$\wGr F_a=\wGr F_a(\wG,V_0)$ are identified along
$V_0 \subset V:=V(\G)$, cf.\ \Def{frame}.  Moreover, let $A$ be an
$s$-partition of $r \in \N$.  As we need at least two different
$s$-partitions of $r$ we restrict ourselves to
$r \in \{4,5,6,\dots \}$ and $s \in \{2,3,\dots,r-2\}$.  Our
isospectral and non-isomorphic graphs will be given by the
$V_1$-contracted frame unions $\wGr F_{A,V_1}$ and $\wGr F_{B,V_1}$,
where $V_1$ is some subset of $V_0$.

\subsection{Special cases and very small contracted frame unions}
\label{subsec:small-iso-frames}

We start with some extreme or trivial cases for the choice of merging vertex sets $V_0$ and $V_1$.
\begin{examples}[no distinguished vertex]
  \label{ex:no.distinguished.vx}
  If $V_1=\emptyset$ in \Def{contr.frame.union}, then
  $\wGr F_{A,\emptyset}=\wGr F_A (:=\bigdcup_{a \in A} \wGr
  F_a(G,V_0))$, i.e., $\wGr F_{A,\emptyset}$ is the disjoint frame
  union of \Def{frame.union}.  Note that the
  graph $\wGr F_{A,\emptyset}$ is not connected.  In particular, for
  different $s$-partitions of $r$, we obtain two isospectral,
  non-isomorphic, but non-connected graphs $\wGr F_A$ and $\wGr F_B$.
\end{examples}

\begin{examples}[all vertices are distingued]
  \label{ex:all.distingushed.vx}
  If $V_1=V_0$, then $\wGr F_{A,V_1}=\wGr F_r$ is the $r$-th frame
  member.  In particular, it depends only on $r$, but not on $s$ any
  more, hence we will not obtain non-isomorphic graphs for different
  partitions of $r$.
\end{examples}
We will hence assume in the sequel that $V_1$ is a \emph{non-trivial}
subset of $V_0$, i.e., that $V_1 \ne \emptyset$ and $V_1 \ne V_0$.
 \begin{table}[h]
   \centering

 \caption{All examples of non-tree building blocks with three vertices
   and their different contracted frame unions (including those with
   multiple edges) for the simplest non-trivial partition
   $A=\lMset 1,3\rMset$, $B=\lMset 2,2\rMset$.}
   \label{tab:all.ex.3vx'}
 \end{table}

\begin{examples}[frames identified along all vertices]
  \label{ex:all.vx.id}
  If $V_0=V(\G)$, then $\wGr F_a(\wG,V(\G))$ is the $r$-fold edge copy
  of $\wG$, i.e., the vertex set is the same, but each edge is copied
  $r$ times keeping its original weight and magnetic potential.
  Examples are the graphs from the building blocks labelled~2.1, 3.3,
  3.3', 3.4, 3.4', 3.6, 3.6' in \Tabs{all.ex.3vx}{all.ex.3vx'}.

  Moreover, the order of the $V_1$-contracted frame union
  $\wGr F_{A,V_1}$ for some $V_1 \subset V_0$ now is
  \begin{equation*}
    \card{\wGr F_{A,V_1}}
    =s(\card{V_0}-\card{V_1}) +\card {V_1},
  \end{equation*}
  i.e., it does not depend on $r$.  In particular, we obtain an
  infinite family of isospectral graphs, as an $s$-partition for
  \emph{any} $r \in \{s+2,s+3, \dots\}$ leads to another
  non-isomorphic graph (isomorphy as multigraph, of course).  We have
  another class of examples in \Ex{all.but.1.contracted}.
\end{examples}

\begin{remark}[frames identified along all vertices, weighted graphs]
  If $\wG=(\G,0,1)$ has standard weight, then the frames for
  $V_0=V(\G)$ as in the previous example are the $r$-fold edge copy of
  $\G$ (i.e., each edge with standard weight is repeated $r$ times).
  Note that $\G$ and its $r$-fold edge copy are isolaplacian (as the
  common factor $r$ cancels out in the matrix representation of the
  Laplacian).

  Moreover, each frame member and also the $V_1$-contracted frame
  union $\Gr F_{A,V_1}$ for some proper subset $V_1 \subset V_0$ can
  be turned into a simple graph (called \emph{underlying simple
    weighted graph}) with edge weights now given by the number of
  parallel edges in the original graph, see \Rem{mult.ed.weighed.gr}.
  Note that as weighted graphs, two contracted frame unions for
  different $s$-partitions are still non-isomorphic (as weighted
  graphs), as the weights are given by the partition $A$.
\end{remark}

A special case of the last example is the following:
\begin{examples}[the smallest non-trivial example]
  Let $\G$ be the graph on two vertices and one edge joining them with
  standard weight.  Let $V_0=V=\{v_1,v_2\}$ and $V_1=\{v_1\}$ (see the
  first row in \Tab{all.ex.3vx}).  Moreover, let
  $A=\lMset a_1,\dots,a_s\rMset$ be an $s$-partition of $r$.  Then
  $\wGr F_{A,v_1}$ is a star graph with $s+1$ vertices and multiple
  edges according to the partition $A$ ($a_1$ edges joining the first
  non-central vertex, $a_2$ the second etc.).  Note that for two
  different $s$-partitions, the corresponding contracted frame unions
  are not isomorphic (as multigraphs).

  The spectrum of $\wGr F_{A,v_1}$ is $\lMset 0, 1^{(s-1)},2\rMset$
  for any $s$-partition $A$.  For $s=2$, the underlying simple
  weighted graph is a path graph with three vertices.  Actually, it
  can be seen that the spectrum of this graph is always
  $\lMset 0,1,2\rMset$ for \emph{any} weights on the two edges.
\end{examples}

 \begin{table}[h]
   \centering

  \caption{All examples of tree building blocks with four
    vertices and their different contracted frame unions leading to
    \emph{simple} graphs.}
   \label{tab:all.ex.4vx}
 \end{table}

\subsection{Other small contracted frame unions}
\label{subsec:small-iso-frames2}

We now list all possible cases with standard weights leading to
isospectral and non-isomorphic examples starting from building blocks
$\G$ with two, three or four vertices.  We start with a certain graph
$\G$ as building block, and draw the vertices in $V_0$ on top and
bottom; while the vertices in $V_1$ (the ``distinguished'' vertices,
along the frames are contracted) are drawn on bottom and outlined).
For one graph $\G$ one might hence have several possibilities of frames
and contracted frame unions.

If $\G$ has two or three vertices, we allow that the isospectral
graphs have parallel edges (see \Tabs{all.ex.3vx}{all.ex.3vx'}).  If $\G$ has four
vertices, we restrict ourselves to examples leading only to
\emph{simple} graphs, see \Rem{simple.frames}.  \Tab{all.ex.4vx}
contains all examples with a building block being a tree with four
vertices while \Tab{all.ex.4vx'} lists all remaining cases.

As simplest non-trivial partitions in \TabS{all.ex.3vx}{all.ex.4vx'}
we choose $A=\lMset 1,3\rMset$ and $B=\lMset 2,2\rMset$.  Moreover,
$C$ denotes an arbitrary $s$-partition of $r$.
\begin{examples}[all examples with building blocks with three
  vertices]
   \label{ex:small.ex}
   Examples of building blocks with three vertices and standard
   weights can be seen in \Tabs{all.ex.3vx}{all.ex.3vx'} (labels starting with 3).
   Only one building block leads to simple isospectral examples (label
   3.1); this example class was already presented
   in~\cite[Example~2]{butler-grout:11} under the name \emph{inflated
     stars} there.

   All other examples leads to graphs with parallel edges.  Note that
   one can always replace parallel edges by a single weighted edge
   with the number of parallel edges as weight, see
   \Rem{mult.ed.weighed.gr}.  The corresponding Laplacians are the
   same, hence we also obtain isospectral, but non-isomorphic
   examples; these examples are not isomorphic as weighted graphs as
   their lists of weights are different.
 \end{examples}

 \begin{table}[h]
   \centering

 \caption{All examples of non-tree building blocks with four vertices
   and their different contracted frame unions leading to
   \emph{simple} graphs.}
   \label{tab:all.ex.4vx'}
 \end{table}

 \begin{examples}[all examples with building blocks with four vertices]
   \label{ex:small.ex4}
   For simplicity, we list here only those cases leading to
   \emph{simple} isospectral examples.  \Tab{all.ex.4vx} contains all
   cases where the building block $\G$ is a tree while
   \Tab{all.ex.4vx'} contains the remaining cases with building
   blocks not being a tree.

   Note that only the cases with label~4.2', 4,4 and 4.4' can be
   treated using Theorem~1 of~\cite{butler-grout:11}.  In all other
   cases, the graphs $G_j[A',B]$ (in the notation
   of~\cite{butler-grout:11}) are not complete bipartite graphs as
   required by~\cite[Thm~.1]{butler-grout:11}.

   The case with label~4.4 can also be treated with the perturbative
   technique described in~\cite[Section~4.2]{fclp:pre22a}, where we
   used the name \emph{fuzzy complete bipartite graph}.  With the
   method in the present paper, it is based on the frames used in
   \Ex{complete.bipartite}.
 \end{examples}

 \subsection{Some other examples with possible non-trivial magnetic
   potential}
\label{subsec:mag-iso-frames}
Here, we give examples where the underlying building block is not a
tree, hence there exist magnetic potentials having an effect on the
spectrum.
\begin{example}[a diamond-like graph with magnetic potential]
  \label{ex:diamond.mag.iso}
  Taking as building block the magnetic graph $\wG^\theta$ of
  \Ex{diamond.mag}, and choosing as set $V_1=\{v_1\}$ just the bottom
  (outlined) vertex, then the spectrum of
  $\DirGr {(\wG^\theta)} {V_1}$ is given by
  \begin{equation*}
    \bigspec{\DirGr {(\wG^\theta)} {V_1}}
    =\Bigl\lMset
    1-\frac{\cos\theta}4 - \frac12\sqrt{1+\frac{\cos^2\theta}4},
    1-\frac{\cos\theta}4 + \frac12\sqrt{1+\frac{\cos^2\theta}4}
    \Bigr\rMset.
  \end{equation*}
  For $A=\lMset 1,3\rMset$ and $B=\lMset 2,2\rMset$, we have the
  isospectral magnetic graphs as in \Fig{diamond-mag.iso}.
  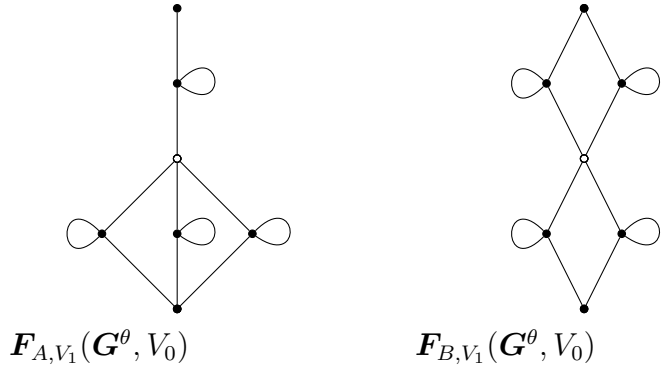
\begin{figure}[h]
  \centering
  {
    \begin{tikzpicture}[auto, vertex/.style={circle,draw=black!100,fill=black!100, 
        inner sep=0pt,minimum size=1mm},scale=1]
      \foreach \x in {0}
      { \node(C\x) at (0,1) [vertex,draw=black,fill=none,label=below:] {};
        \node (B\x) at (\x*.5,2) [vertex,label=below:] {};
        \draw[-] (B\x) to[min distance=8mm,in=45,out=325,looseness=1]  node[right] {} (B\x);
        \path [-](C\x) edge node[right] {} (B\x);
        \node (A\x) at (0,3) [vertex,label=below:] {};
        \path [-](B\x) edge node[right] {} (A\x);
      }
      \foreach \x in {0,2}
      { \node(C\x) at (0,1) [vertex,draw=black,fill=none,label=below:] {};
        \node (B\x) at (\x*.5,0) [vertex,label=below:] {};
        \draw[-] (B\x) to[min distance=8mm,in=45,out=325,looseness=1]  node[right] {} (B\x);
        \path [-](C\x) edge node[right] {} (B\x);
        \node (A\x) at (0,-1) [vertex,label=below:] {};
        \path [-](B\x) edge node[right] {} (A\x);
      }
      \foreach \x in {-2}
      { \node(C\x) at (0,1) [vertex,draw=black,fill=none,label=below:] {};
        \node (B\x) at (\x*.5,0) [vertex,label=below:] {};
        \draw[-] (B\x) to[min distance=8mm,in=135,out=225,looseness=1]  node[right] {} (B\x);
        \path [-](C\x) edge node[right] {} (B\x);
        \node (A\x) at (0,-1) [vertex,label=below:] {};
        \path [-](B\x) edge node[right] {} (A\x);
      }
      \draw[] (.3,.-1.5)node[left] {$\wGr F_{A,V_1}(\wG^\theta,V_0)$};
    \end{tikzpicture}\quad \quad \quad
    \begin{tikzpicture}[auto, vertex/.style={circle,draw=black!100,fill=black!100, 
        inner sep=0pt,minimum size=1mm},scale=1]
      \foreach \x in {-1}
      { \node(C\x) at (0,1) [vertex,draw=black,fill=none,label=below:] {};
        \node (B\x) at (\x*.5,2) [vertex,label=below:] {};
        \draw[-] (B\x) to[min distance=8mm,in=135,out=225,looseness=1]  node[right] {} (B\x);
        \path [-](C\x) edge node[right] {} (B\x);
        \node (A\x) at (0,3) [vertex,label=below:] {};
        \path [-](B\x) edge node[right] {} (A\x);
      }
      \foreach \x in {1}
      { \node(C\x) at (0,1) [vertex,draw=black,fill=none,label=below:] {};
        \node (B\x) at (\x*.5,2) [vertex,label=below:] {};
        \draw[-] (B\x) to[min distance=8mm,in=45,out=325,looseness=1]  node[right] {} (B\x);
        \path [-](C\x) edge node[right] {} (B\x);
        \node (A\x) at (0,3) [vertex,label=below:] {};
        \path [-](B\x) edge node[right] {} (A\x);
      }
      \foreach \x in {1}
      { \node(C\x) at (0,1) [vertex,draw=black,fill=none,label=below:] {};
        \node (B\x) at (\x*.5,0) [vertex,label=below:] {};
        \draw[-] (B\x) to[min distance=8mm,in=45,out=325,looseness=1]  node[right] {} (B\x);
        \path [-](C\x) edge node[right] {} (B\x);
        \node (A\x) at (0,-1) [vertex,label=below:] {};
        \path [-](B\x) edge node[right] {} (A\x);
      }
      \foreach \x in {-1}
      { \node(C\x) at (0,1) [vertex,draw=black,fill=none,label=below:] {};
        \node (B\x) at (\x*.5,0) [vertex,label=below:] {};
        \draw[-] (B\x) to[min distance=8mm,in=135,out=225,looseness=1]  node[right] {} (B\x);
        \path [-](C\x) edge node[right] {} (B\x);
        \node (A\x) at (0,-1) [vertex,label=below:] {};
        \path [-](B\x) edge node[right] {} (A\x);
      }

      \draw[] (.3,.-1.5)node[left] {$\wGr F_{B,V_1}(\wG^\theta,V_0)$};
    \end{tikzpicture}
  }
  \caption{Two magnetic isospectral graphs. Note that the magnetic
    potential is the same on each loop.}
  \label{fig:diamond-mag.iso}
  \end{figure}
\end{example}


\begin{example}[kite graphs]
   \label{ex:kite}
   Let $\G$ be the complete graph on four vertices with one pendant
   vertex added (so $\G$ has five vertices) as in the frame
   construction of \Ex{kite.frame}.  The vertex set $V_0$ consists now
   of the pendant vertex and one of the remaining four.  The set of
   distinguished vertices $V_1$ consists just of the pendant vertex.
   Let $\wG$ be the corresponding weighted graph with standard weights
   and no magnetic potential, then
   \begin{align*}
     \spec{\DirGr \wG {V_1}}
     = \Bigl\lMset \frac{4-\sqrt {13}}6,\frac 43,\frac 43,\frac{4+\sqrt {13}}6
     \Bigr\rMset.
   \end{align*}
   An example of isospectral graphs is given in \Fig{kite.isospec}
   (right hand side).  Note that this construction also works for any
   magnetic potential, see \Ex{kite.frame}.
\end{example}

\begin{example}[all non-trivial magnetic graphs with building block
  with four vertices]
  Note that all examples of \Tab{all.ex.4vx'} can carry also a
  magnetic potential on the building block leading to a non-trivial
  family with one parameter (labels 4.5, 4.5' and 4.6) respectively
  two parameters (label 4.7) in
  $\RmodZ$.
\end{example}

 \subsection{A class of weighted isospectral graphs}
\label{subsec:weighted-iso}

\begin{example}[contracted along all but one vertex]
  \label{ex:all.but.1.contracted}
  Let $\G$ be a graph with standard weights and $V_0=V(\G)$.  Then the
  frame members $\Gr F_a(\wG,V_0)$ are the $r$-fold edge copy of $\G$
  as in \Ex{all.vx.id} and in particular isolaplacian with $\G$ (see
  second and third graph in \Fig{all.but.1.identified}).  Note that
  Butler~\cite{butler:15} also uses the idea of changing weights.
  Special cases are actually the graphs with label 2.1, 3.3', 3.4' and
  3.6' in \Tabs{all.ex.3vx}{all.ex.3vx'} where $\G$ is a path graph
  with two or three vertices or a cycle graph with three vertices.

  Let now $\Gr F_{A,V_1}=\Gr F_{A,V_1}(\G,V(\G))$ be the contracted
  frame union for $V_1=V(\G) \setminus \{v^*\}$ and some $s$-partition
  of $r$.  Here, $v^* \in V(\G)$ is one chosen vertex.  Its order is
  $\card{\Gr F_{A,V_1}}=s+\card{V_1}=\card \wG +s-1$ independently of
  $r$ as in \Ex{all.vx.id}.  The spectrum of $\DirGr \G {V_1}$ is just
  $\lMset 1 \rMset$, as $V(\G) \setminus V_1=\{v^*\}$ contains just
  one point.  From \Thm{spec.contr.frame.union} we conclude that
  \begin{equation*}
    \spec{\Gr F_{A,V_1}}
    = \spec \G \uplus \lMset 1\rMset^{(s-1)}.
  \end{equation*}
  Note that the matrix representation of Laplacian on $\Gr F_{A,V_1}$
  (with standard weights) has the matrix representation of $\G^*$ as
  principal submatrix, where $\G^*$ denotes the graph $\G$ with $v^*$
  and all adjacent edges removed.  It could also be observed that
  passing to the corresponding simple weighted graphs (with weights
  $r$ on all edges in $\G^*$) and weights $a \in A$ on the edges
  joining the $s$ copies of $v^*$.  Dividing the weights by the common
  factor $1/r$ (this does not change the Laplacian), we end up with an
  isolaplacian graph $\G_{A,v^*}$ with standard weights ($w_e=1$) for
  all edges inside $\G^*$, and weights $a/r$ ($a \in A$) for the
  remaining edges joining the extra $s$ copies of $v^*$ (see
  \Fig{all.but.1.identified}).  In particular, we have a way of
  keeping the spectrum of $\G$ and adding eigenvalues $1$.
  \begin{figure}[h]
    \centering
    {
      \begin{tikzpicture}[auto,vertex/.style={circle,draw=black!100,%
          fill=black!100, 
          inner sep=0pt,minimum size=1mm},scale=1]
        \foreach \x in {0}
        { \node(C\x) at (\x*0.5,1) [vertex,fill=black,label=left:] {};
          \node (B\x) at (-0.5,0) [vertex,fill=white,label=left:] {};
          \node (BB\x) at (0.5,0) [vertex,fill=white,label=left:] {};
          \path [-](C\x) edge node[right] {} (B\x);
          \path [-](C\x) edge node[right] {} (BB\x);
          \path [-](B\x) edge node[right] {} (BB\x);
          \node (A\x) at (-0.5,-1) [vertex,fill=white,label=left:] {};
          \node (AA\x) at (0.5,-1) [vertex,fill=white,label=left:] {};
          \path [-](B\x) edge node[right] {} (A\x);
          \path [-](BB\x) edge node[right] {} (AA\x);
          \path [-](A\x) edge node[right] {} (AA\x);
        }
        \draw[] (0.2,.-1.5)node[left] {$\G$};
        \draw[] (0.5,1.3)node[left] {$v^*$};
      \end{tikzpicture}\quad
      \begin{tikzpicture}[auto,vertex/.style={circle,draw=black!100,%
          fill=black!100, 
          inner sep=0pt,minimum size=1mm},scale=1]
        \foreach \x in {0}
        { \node(C\x) at (\x*0.5,1) [vertex,fill=black,label=left:] {};
          \node (B\x) at (-0.5,0) [vertex,fill=white,label=left:] {};
          \node (BB\x) at (0.5,0) [vertex,fill=white,label=left:] {};
          \path [bend left=7](C\x) edge node[right] {} (B\x);
          \path [bend right=7](C\x) edge node[right] {} (B\x);
          \path [bend left=7](C\x) edge node[right] {} (BB\x);
          \path [bend right=7](C\x) edge node[right] {} (BB\x);
          \node (A\x) at (-0.5,-1) [vertex,fill=white,label=left:] {};
          \node (AA\x) at (0.5,-1) [vertex,fill=white,label=left:] {};
          \path [bend left=7](B\x) edge node[right] {} (BB\x);
          \path [bend right=7](B\x) edge node[right] {} (BB\x);
          \path [bend left=7](B\x) edge node[right] {} (A\x);
          \path [bend right=7](B\x) edge node[right] {} (A\x);
          \path [bend left=7](BB\x) edge node[right] {} (AA\x);
          \path [bend right=7](BB\x) edge node[right] {} (AA\x);
          \path [bend left=7](A\x) edge node[right] {} (AA\x);
          \path [bend right=7](A\x) edge node[right] {} (AA\x);
        }
        \draw[] (1,.-1.5)node[left] {$\Gr F_2(\G,V_0)$};
      \end{tikzpicture}\quad
      \begin{tikzpicture}[auto,vertex/.style={circle,draw=black!100,%
          fill=black!100, 
          inner sep=0pt,minimum size=1mm},scale=1]
        \foreach \x in {0}
        { \node(C\x) at (\x*0.5,1) [vertex,fill=black,label=left:] {};
          \node (B\x) at (-0.5,0) [vertex,fill=white,label=left:] {};
          \node (BB\x) at (0.5,0) [vertex,fill=white,label=left:] {};
          \path [-](C\x) edge node[right] {} (B\x);
          \path [bend left=10](C\x) edge node[right] {} (B\x);
          \path [bend right=10](C\x) edge node[right] {} (B\x);
          \path [-](C\x) edge node[right] {} (BB\x);
          \path [bend left=10](C\x) edge node[right] {} (BB\x);
          \path [bend right=10](C\x) edge node[right] {} (BB\x);
          \node (A\x) at (-0.5,-1) [vertex,fill=white,label=left:] {};
          \node (AA\x) at (0.5,-1) [vertex,fill=white,label=left:] {};
          \path [-](B\x) edge node[right] {} (BB\x);
          \path [bend left=10](B\x) edge node[right] {} (BB\x);
          \path [bend right=10](B\x) edge node[right] {} (BB\x);
          \path [-](B\x) edge node[right] {} (A\x);
          \path [bend left=10](B\x) edge node[right] {} (A\x);
          \path [bend right=10](B\x) edge node[right] {} (A\x);
          \path [-](BB\x) edge node[right] {} (AA\x);
          \path [bend left=10](BB\x) edge node[right] {} (AA\x);
          \path [bend right=10](BB\x) edge node[right] {} (AA\x);
          \path [-](A\x) edge node[right] {} (AA\x);
          \path [bend left=10](A\x) edge node[right] {} (AA\x);
          \path [bend right=10](A\x) edge node[right] {} (AA\x);
        }
        \draw[] (1,.-1.5)node[left]  {$\Gr F_3(\G,V_0)$};
      \end{tikzpicture}\quad
      \begin{tikzpicture}[auto,vertex/.style={circle,draw=black!100,%
          fill=black!100, 
          inner sep=0pt,minimum size=1mm},scale=1]
          \node (B) at (-0.5,0) [vertex,fill=white,label=left:] {};
          \node (BB) at (0.5,0) [vertex,fill=white,label=left:] {};
          \node (A) at (-0.5,-1) [vertex,fill=white,label=left:] {};
          \node (AA) at (0.5,-1) [vertex,fill=,label=left:] {};
          \path [bend left=5](B) edge node[right] {} (BB);
          \path [bend right=5](B) edge node[right] {} (BB);
          \path [bend left=5](B) edge node[right] {} (A);
          \path [bend right=5](B) edge node[right] {} (A);
          \path [bend left=5](BB) edge node[right] {} (AA);
          \path [bend right=5](BB) edge node[right] {} (AA);
          \path [bend left=5](A) edge node[right] {} (AA);
          \path [bend right=5](A) edge node[right] {} (AA);
          \path [bend left=14](B) edge node[right] {} (BB);
          \path [bend right=14](B) edge node[right] {} (BB);
          \path [bend left=14](B) edge node[right] {} (A);
          \path [bend right=14](B) edge node[right] {} (A);
          \path [bend left=14](BB) edge node[right] {} (AA);
          \path [bend right=14](BB) edge node[right] {} (AA);
          \path [bend left=14](A) edge node[right] {} (AA);
          \path [bend right=14](A) edge node[right] {} (AA);
          \node(C1) at (-0.5,1) [vertex,fill=black,label=left:] {};
          \node(C2) at (0.5,1) [vertex,fill=black,label=left:] {};
          \path [-](C1) edge node[right] {} (B);
          \path [-](C1) edge node[right] {} (BB);
          \path [bend right=10](C2) edge node[right] {} (B);
          \path [bend left=10](C2) edge node[right] {} (B);
          \path [bend right=10](C2) edge node[right] {} (BB);
          \path [bend left=10](C2) edge node[right] {} (BB);
          \path [-](C2) edge node[right] {} (B);
          \path [-](C2) edge node[right] {} (BB);
        \draw[] (1,.-1.5)node[left]  {$\Gr F_{A,V_1}(\G,V_0)$};
      \end{tikzpicture}\quad
      \begin{tikzpicture}[auto,vertex/.style={circle,draw=black!100,%
          fill=black!100, 
          inner sep=0pt,minimum size=1mm},scale=1]
          \node (B) at (-0.5,0) [vertex,fill=white,label=left:] {};
          \node (BB) at (0.5,0) [vertex,fill=white,label=left:] {};
          \node (A) at (-0.5,-1) [vertex,fill=white,label=left:] {};
          \node (AA) at (0.5,-1) [vertex,fill=,label=left:] {};
          \path [bend left=5](B) edge node[right] {} (BB);
          \path [bend right=5](B) edge node[right] {} (BB);
          \path [bend left=5](B) edge node[right] {} (A);
          \path [bend right=5](B) edge node[right] {} (A);
          \path [bend left=5](BB) edge node[right] {} (AA);
          \path [bend right=5](BB) edge node[right] {} (AA);
          \path [bend left=5](A) edge node[right] {} (AA);
          \path [bend right=5](A) edge node[right] {} (AA);
          \path [bend left=14](B) edge node[right] {} (BB);
          \path [bend right=14](B) edge node[right] {} (BB);
          \path [bend left=14](B) edge node[right] {} (A);
          \path [bend right=14](B) edge node[right] {} (A);
          \path [bend left=14](BB) edge node[right] {} (AA);
          \path [bend right=14](BB) edge node[right] {} (AA);
          \path [bend left=14](A) edge node[right] {} (AA);
          \path [bend right=14](A) edge node[right] {} (AA);
          \node(C1) at (-0.5,1) [vertex,fill=black,label=left:] {};
          \node(C2) at (0.5,1) [vertex,fill=black,label=left:] {};
          \path [bend right=7](C1) edge node[right] {} (B);
          \path [bend right=7](C1) edge node[right] {} (BB);
          \path [bend right=7](C2) edge node[right] {} (B);
          \path [bend right=7](C2) edge node[right] {} (BB);
          \path [bend left=7](C1) edge node[right] {} (B);
          \path [bend left=7](C1) edge node[right] {} (BB);
          \path [bend left=7](C2) edge node[right] {} (B);
          \path [bend left=7](C2) edge node[right] {} (BB);
        \draw[] (1,.-1.5)node[left]  {$\Gr F_{B,V_1}(\G,V_0)$};
      \end{tikzpicture}\quad
      \begin{tikzpicture}[auto,vertex/.style={circle,draw=black!100,%
          fill=black!100, 
          inner sep=0pt,minimum size=1mm},scale=1]
          \node (B) at (-0.5,0) [vertex,fill=white,label=left:] {};
          \node (BB) at (0.5,0) [vertex,fill=white,label=left:] {};
          \node (A) at (-0.5,-1) [vertex,fill=white,label=left:] {};
          \node (AA) at (0.5,-1) [vertex,fill=white,label=left:] {};
          \node(C1) at (-0.5,1) [vertex,fill=black,label=left:] {};
          \node(C2) at (0.5,1) [vertex,fill=black,label=left:] {};
          \path [-](B) edge node[right] {} (BB);
          \path [-](B) edge node[right] {} (A);
          \path [-](BB) edge node[right] {} (AA);
          \path [-](A) edge node[right] {} (AA);
          \path [-](C1) edge node[right] {} (B);
          \path [-](C1) edge node[right] {} (BB);
          \path [-](C2) edge node[right] {} (B);
          \path [-](C2) edge node[right] {} (BB);
          \draw[] (-0.40,0.6)node[left]  {$\frac14$};
          \draw[] (0.40,0.6)node[right]  {$\frac34$};
          \draw[] (0.1,1)node[left]  {$\frac14$};
          \draw[] (-0.1,1)node[right]  {$\frac34$};
        \draw[] (0.8,.-1.5)node[left]  {$\wG_{A,v^*}$};
      \end{tikzpicture}\quad
      \begin{tikzpicture}[auto,vertex/.style={circle,draw=black!100,%
          fill=black!100, 
          inner sep=0pt,minimum size=1mm},scale=1]
          \node (B) at (-0.5,0) [vertex,fill=white,label=left:] {};
          \node (BB) at (0.5,0) [vertex,fill=white,label=left:] {};
          \node (A) at (-0.5,-1) [vertex,fill=white,label=left:] {};
          \node (AA) at (0.5,-1) [vertex,fill=white,label=left:] {};
          \node(C1) at (-0.5,1) [vertex,fill=black,label=left:] {};
          \node(C2) at (0.5,1) [vertex,fill=black,label=left:] {};
          \path [-](B) edge node[right] {} (BB);
          \path [-](B) edge node[right] {} (A);
          \path [-](BB) edge node[right] {} (AA);
          \path [-](A) edge node[right] {} (AA);
          \path [-](C1) edge node[right] {} (B);
          \path [-](C1) edge node[right] {} (BB);
          \path [-](C2) edge node[right] {} (B);
          \path [-](C2) edge node[right] {} (BB);
          \draw[] (-0.40,0.6)node[left]  {$\frac12$};
          \draw[] (0.40,0.6)node[right]  {$\frac12$};
          \draw[] (0.1,1)node[left]  {$\frac12$};
          \draw[] (-0.1,1)node[right]  {$\frac12$};
        \draw[] (0.8,.-1.5)node[left]  {$\wG_{B,v^*}$};
      \end{tikzpicture}
    }
    \caption{All vertices in a frame are identified ($V_0=V(\G)$), and for
      $V_1=V_0 \setminus \{v^*\}$.  On the right, the pair of isospectral
      weighted graphs with edge weights indicated (if different from $1$).
    \label{fig:all.but.1.identified}
  }
  \end{figure}
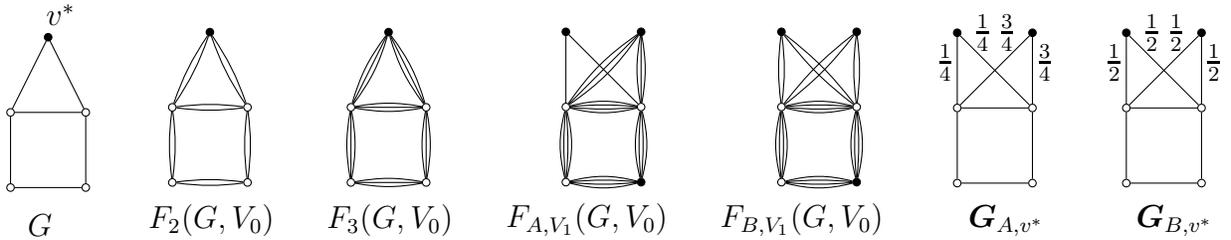
\end{example}
%
\section{Conclusions}
\label{sec:conclusions}
%

Sunada's method~\cite{sunada:85} as well as some of its
generalisations~\cite{bpbs:09,parzanchevski-band:10,bbjl:pre17} use
representation theory as a key ingredient to construct the isospectral
manifolds or graphs which appear as quotient graphs. It is worth to point
out differences and similarities between the method presented here and
the one given, for example, in~\cite{bpbs:09} even if we think that
both methods are different in nature. We do not exclude though that
both approaches may be combined.

In~\cite{bpbs:09} the metric graph considered first carries the action
of a finite (in general non-commutative) group.  Two different
subgroups are chosen which will determine the isospectral quotient
graphs provided the corresponding induced representations are
equivalent. Neumann and Dirichlet conditions on the metric graph
appear naturally.

The method presented in this article has a more combinatorial nature
even if the underlying symmetry is, of course, expressed in terms of
cyclic (commutative) groups acting on the copies of the building block
as mentioned in \Rem{dualgroup}.  But the construction of the quotient
here goes through \emph{for any} choice of the pair of subsets $V_0$,
$V_1$, and we see no way to implement this freedom through
subgroups of $\Z_a$.  Another instance is that the action of the cyclic
groups is not free on the quotient, as the vertices in $V_0$ are fixed
points.  An additional difference is the importance of the eigenfunctions
with Dirichlet conditions on the vertices $V_0$ and $V_1$ and the
specific form of the partition that appears explicitly in the degree
lists of the different quotients but not in the corresponding spectra.
Nevertheless, Brooks result on isospectral (regular) graphs not
arising from Sunada's method (cf.~\cite{brooks:99}) as well as a
theory of groups acting not freely on graphs might give a hint how to
understand our examples in the spirit of representation theory.

Last but not least the approach using Dirichlet-to-Neumann maps of
subgraphs of quantum graphs in~\cite{kurasov-muller:pre21} produces
some of our examples of isospectral graphs using different methods; and
it may also apply to discrete graphs.  Moreover, as the authors also
write in~\cite{kurasov-muller:pre21} it is not obvious how their
approach can be linked with representation theory.

%
%
\appendix

\section{Multisets and partitions}
\label{app:appendix}

\subsection{Multisets}
\label{app:multisets}
Multisets will be a convenient tool to deal with spectra and degree
lists of graphs.
\begin{itemize}
\item A \emph{multiset} is an ordered pair $(A,m)$ where $A$ is a set
  and $\map m A {\N_0=\{0,1,2,\dots\}}$ is a map on $A$.  We say that
  $x \in A$ appears \emph{$m(x)$-times} in $A$.  In order to simplify
  notation, we set $m_A(x)=0$ whenever $x \notin A$.

\item We often simply refer to $A$ as a multiset without mentioning
  the map $m$, and hence refer to the multiplicity map of $A$ as
  $m_A$.  Sets $A$ can be considered as multisets by setting
  $m_A(x)=1$ whenever $x \in A$ and $0$ otherwise.
\item The \emph{cardinality} of the multiset $A$ is the sum of the
  multiplicities of all its elements, i.e.,
  $\card A =\sum_{x\in A} m_A(x)$.
\item When $A$ is finite (i.e., $\card A < \infty$) we write
  $A=\{a_1,a_2,\dots,a_s\}$, and a multiset $(A,m)$ will then also be
  written as
  \begin{align*}
    A&=\lMset a_1^{(m(a_1))},a_2^{(m(a_2))},\dots,a_s^{(m(a_s))} \rMset
     \;=\;\lMset \underbrace{a_1,\dots,a_1}_{m(a_1)},
       \underbrace{a_2,\dots,a_2}_{m(a_2)},\dots,
       \underbrace{a_s,\dots,a_s}_{m(a_s)}\rMset.
  \end{align*}
  Multisets with elements in $\N$ or $\R$ (or any other linearly
  ordered set) can be also seen as ordered lists and sometimes it will
  be convenient for the exposition to take this point of view.

\item The \emph{sum} of two multisets $A$ and $B$ denoted as
  $ A \uplus B$ is the set $A \cup B$ with multiplicity map
  $m_{A_\uplus B}(x):=m_A(x)+m_B(x)$ for all $x \in A \cup B$.

\item The \emph{difference} of two multisets, denoted by $A \ominus B$
  is the set $A \setminus B$ with multiplicity map
  $m_{A \ominus B}(x) := \max\{m_A(x)-m_B(x),0\}$.

\item The \emph{$k$-th multiple} of a multiset $A$ (with multiplicity
  map $m_A$), denoted by $A^{(k)}$, is the multiset $A$ with
  multiplicity map $m_{A^{(k)}}(x)=km_A(x)$ for all $x\in A$, i.e.,
  the multiplicity of each element in $A^{(k)}$ is multiplied by $k$.
\end{itemize}

\subsection{Partitions of a natural number}
\label{app:partitions}

The notion of a partition of a natural number will be important for
our construction of isospectral graphs.
\begin{definition}[partition of a number]
  \label{def:partition}
  A partition of a natural number $r$ of length $s$ (or an $s$-\emph{partition} $r$ for short) is a multiset $A$ of natural numbers with sum $r$, i.e.,
  \begin{equation*}
    A= \lMset a_1,a_2,\dots,a_s \rMset
    \quadtext{such that}
    \sum_{i= 1}^s a_i =r.
  \end{equation*}
  The number $s$ is called the \emph{length of the partition}.
\end{definition}
We frequently use the following equations
\begin{equation}
  \label{eq:part.number}
  \sum_{a \in A} 1=s
  \qquadtext{and}
  \sum_{a \in A} a=r,
\end{equation}
where $A$ is understood as a multiset.

%
%

\newcommand{\etalchar}[1]{$^{#1}$}
  \def\cprime{$'$}
\providecommand{\MRhref}[2]{%
  \href{http://www.ams.org/mathscinet-getitem?mr=#1}{#2}
}
\providecommand{\href}[2]{#2}


\begin{thebibliography}{LaKB{\etalchar{+}}21}

\bibitem[AFM22]{afm:22}
G.~Angelone, P.~Facchi, and G.~Marmo, \emph{Hearing the shape of a quantum
  boundary condition}, Modern Phys. Lett. A \textbf{37} (2022), Paper No.
  2250114, 15p.

\bibitem[A02]{arendt:02} W.~Arendt, \emph{Does diffusion determine the
    body?} J. Reine Angew. Math.  \textbf{550} (2002), 97--123.


\bibitem[BBJL17]{bbjl:pre17}
R.~Band, G.~Berkolaiko, C.~H. Joyner, and W.~Liu, \emph{Quotients of
  finite-dimensional operators by symmetry representations}, arXiv:1711.00918
  (2017).

\bibitem[BPBS09]{bpbs:09}
R.~Band, O.~Parzanchevski, and G.~Ben-Shach, \emph{The isospectral fruits of
  representation theory: quantum graphs and drums}, J. Phys. A \textbf{42}
  (2009), 175202, 42p.

\bibitem[BaJ08]{banerjee-jost:08}
A.~Banerjee and J.~Jost, \emph{On the spectrum of the normalized graph
  {L}aplacian}, Linear Algebra Appl. \textbf{428} (2008), 3015--3022.

\bibitem[BeK13]{berkolaiko-kuchment:13}
G.~Berkolaiko and P.~Kuchment, \emph{Introduction to quantum graphs},
  Mathematical Surveys and Monographs, vol. 186, American Mathematical Society,
  Providence, RI, 2013.

\bibitem[Br99]{brooks:99}
R.~Brooks, \emph{Non-{S}unada graphs}, Ann. Inst. Fourier (Grenoble)
  \textbf{49} (1999), 707--725.

\bibitem[BrH12]{brouwer-haemers:12}
A.~E. Brouwer and W.~H. Haemers, \emph{Spectra of graphs}, Universitext,
  Springer, New York, 2012.

\bibitem[Bu10]{butler:10}
S.~Butler, \emph{A note about cospectral graphs for the adjacency and
  normalized {L}aplacian matrices}, Linear Multilinear Algebra \textbf{58}
  (2010), 387--390.

\bibitem[Bu15]{butler:15}
S.~Butler, \emph{Using twins and scaling to construct cospectral graphs for the
  normalized {L}aplacian}, Electron. J. Linear Algebra \textbf{28} (2015),
  54--68.

\bibitem[BuG11]{butler-grout:11}
S.~Butler and J.~Grout, \emph{A construction of cospectral graphs for the
  normalized {L}aplacian}, Electron. J. Combin. \textbf{18} (2011), Paper 231,
  20p.

\bibitem[BuH16]{butler-heysse:16}
S.~Butler and K.~Heysse, \emph{A cospectral family of graphs for the normalized
  {L}aplacian found by toggling}, Linear Algebra Appl. \textbf{507} (2016),
  499--512.

\bibitem[Ca10]{cavers:10}
M.~S. Cavers, \emph{The normalized {L}aplacian matrix and general {R}andi\'c
  index of graphs}, ProQuest LLC, Ann Arbor, MI, 2010, Thesis (Ph.D.)--The
  University of Regina (Canada).

\bibitem[CP20]{chernyshenko-pivovarchik:20} A.~Chernyshenko and
  V.~Pivovarchik, \emph{Recovering the shape of a quantum graph},
  Integral Equations Operator Theory \textbf{92} (2020), Paper No. 23,
  17p.

\bibitem[CDS95]{cds:95}
D.~M. Cvetkovi{\'c}, M.~Doob, and H.~Sachs, \emph{Spectra of graphs}, third
  ed., Johann Ambrosius Barth, Heidelberg, 1995, Theory and applications.

\bibitem[Ch97]{chung:97}
F.~Chung, \emph{Spectral graph theory}, CBMS Regional Conference Series in
  Mathematics, vol.~92, Published for the Conference Board of the Mathematical
  Sciences, Washington, DC, 1997.

\bibitem[DSG17]{dsg:17}
K.~C. Das, S.~Sun, and I.~Gutman, \emph{Normalized {L}aplacian eigenvalues and
  {R}andi\'{c} energy of graphs}, MATCH Commun. Math. Comput. Chem. \textbf{77}
  (2017), 45--59.

\bibitem[FCLP18]{fclp:18}
J.~S. Fabila-Carrasco, F.~Lled\'{o}, and O.~Post, \emph{Spectral gaps and
  discrete magnetic {L}aplacians}, Linear Algebra Appl. \textbf{547} (2018),
  183--216.

\bibitem[FCLP22a]{fclp:pre22a} J.~S. Fabila-Carrasco, F.~Lled\'{o},
  and O.~Post, \emph{Isospectral graphs via spectral bracketing},
  arXiv:2207.03924 (2022).

\bibitem[FCLP22b]{fclp:22b}
J.~S. Fabila-Carrasco, F.~Lled\'{o}, and O.~Post,
\emph{Matching number, {H}amiltonian graphs and magnetic {L}aplacian
  matrices}, Linear Algebra Appl. \textbf{642} (2022), 86--100.

\bibitem[FCLP22c]{fclp:22a}
J.~S. Fabila-Carrasco, F.~Lled\'{o}, and O.~Post,
\emph{Spectral preorder and perturbations of discrete weighted graphs},
Math. Ann. \textbf{382} (2022), 1775--1823.

\bibitem[GSS05]{gss:05}
S.~Gnutzmann, U.~Smilansky and N.~Sondergaard,
\emph{Resolving isospectral ‘drums’ by counting nodal domains},
J. Phys. A: Math. Gen. 38 (2005) 8921–8933

\bibitem[GM82]{godsil-mckay:82}
C.~D. Godsil and B.~D. McKay, \emph{Constructing cospectral graphs},
  Aequationes Math. \textbf{25} (1982), 257--268.

\bibitem[GPS05]{gps:05}
C.~Gordon, P.~Perry, and D.~Schueth, \emph{Isospectral and isoscattering
  manifolds: a survey of techniques and examples}, Geometry, spectral theory,
  groups, and dynamics, Contemp. Math., vol. 387, Amer. Math. Soc., Providence,
  RI, 2005, pp.~157--179.

\bibitem[GS01]{gutkin-smilansky:01}
B.~Gutkin and U.~Smilansky, \emph{Can one hear the shape of a graph?}, J. Phys.
  A \textbf{34} (2001), 6061--6068.

\bibitem[GWW92]{gww:92}
C.~Gordon, D.~Webb, and S.~Wolpert, \emph{Isospectral plane domains and
  surfaces via {R}iemannian orbifolds}, Invent. Math. \textbf{110} (1992),
  1--22.

\bibitem[HH99]{halbeisen-hungerbuehler:99}
L.~Halbeisen and N.~Hungerb\"{u}hler, \emph{Generation of isospectral graphs},
  J. Graph Theory \textbf{31} (1999), 255--265.

\bibitem[HL22]{hu-li:22}
M.~Hu and S.~Li, \emph{Cospectral graphs for the normalized {L}aplacian}, AIMS
  Math. \textbf{7} (2022), 4061--4067.

\bibitem[HR79]{hewitt-ross-1}
E.~Hewitt and K.~A. Ross, \emph{Abstract harmonic analysis. {V}ol. {I}}, second
  ed., Springer-Verlag, Berlin, 1979.

\bibitem[HS04]{haemers-spence:04}
W.~H. Haemers and E.~Spence, \emph{Enumeration of cospectral graphs}, European
  J. Combin. \textbf{25} (2004), 199--211.

\bibitem[Iv16]{ivrii:16}
V.~Ivrii, \emph{100 years of {W}eyl's law}, Bull. Math. Sci. \textbf{6} (2016),
  379--452.

\bibitem[JJ18]{juul-joyner:18}
J.~S. Juul and C.~H. Joyner, \emph{Isospectral discrete and quantum graphs with
  the same flip counts and nodal counts}, J. Phys. A \textbf{51} (2018),
  245101, 15p.

\bibitem[Kac66]{kac:66}
M.~Kac, \emph{{Can one hear the shape of a drum?}}, Am. Math. Mon. \textbf{73}
  (1966), 1--23.

\bibitem[KM21]{kurasov-muller:pre21}
P.~Kurasov and J.~Muller, \emph{On isospectral metric graphs}, arXiv:2112.04230
  (2021).

\bibitem[Ku08]{kuchment:08}
P.~Kuchment, \emph{Quantum graphs: an introduction and a brief survey},
  Analysison Graphs and its Applications (Providence, R.I.) (P.~Exner, J.~P.
  Keating, P.~Kuchment, T.~Sunada, and A.~Teplayaev, eds.), Proc. Symp. Pure
  Math., vol. 77, Amer. Math. Soc., 2008, pp.~291--312.

\bibitem[\L KB{\etalchar{+}}21]{lkbbss:22}
M.~\L awniczak, P.~Kurasov, S.~Bauch, M.~g. Bia\l~ous, A.~Akhshani, and
  L.~Sirko, \emph{A new spectral invariant for quantum graphs}, Nature Sci.\
  Rep. \textbf{11} (2021), 15342.

\bibitem[Lim20]{lim:20}
L.-H. Lim, \emph{Hodge {L}aplacians on graphs}, SIAM Rev. \textbf{62} (2020),
  685--715.

\bibitem[LLPP15]{llpp:15}
C.~Lange, S.~Liu, N.~Peyerimhoff, and O.~Post, \emph{Frustration index and
  {C}heeger inequalities for discrete and continuous magnetic {L}aplacians},
  Calc. Var. Partial Differential Equations \textbf{54} (2015), 4165--4196.

\bibitem[LP08]{lledo-post:08b}
F.~Lled\'o and O.~Post, \emph{Eigenvalue bracketing for discrete and metric
  graphs}, J. Math. Anal. Appl. \textbf{348} (2008), 806--833.

\bibitem[LR15]{lu-rowlett:15}
Z.~Lu and J.~Rowlett, \emph{The sound of symmetry}, Amer. Math. Monthly
  \textbf{122} (2015), 815--835.

\bibitem[Mer97]{merris:97}
R.~Merris, \emph{Large families of {L}aplacian isospectral graphs}, Linear and
  Multilinear Algebra \textbf{43} (1997), 201--205.

\bibitem[Moh92]{mohar:92}
B.~Mohar, \emph{A domain monotonicity theorem for graphs and {H}amiltonicity},
  Discrete Appl. Math. \textbf{36} (1992), 169--177.

\bibitem[OB12]{oren-band:12}
I.~Oren and R.~Band,
\emph{Isospectral graphs with identical nodal counts},
 J. Phys. A: Math. Theor. \textbf{45} (2012), 135203.

\bibitem[Osb13]{osborne:13}
S.~P. Osborne, \emph{Cospectral bipartite graphs for the normalized
  {L}aplacian}, ProQuest LLC, Ann Arbor, MI, 2013, Thesis (Ph.D.)--Iowa State
  University.

\bibitem[PLK22]{park:22}
S.~Park, I.~Lee, J.~Kim et al.
\emph{Hearing the shape of a drum for light: isospectrality in photonics},
Nanophotonics \textbf{11} (2022) 2763-2778.

\bibitem[PB10]{parzanchevski-band:10}
O.~Parzanchevski and R.~Band, \emph{Linear representations and isospectrality
  with boundary conditions}, J. Geom. Anal. \textbf{20} (2010), 439--471.

\bibitem[Pi23]{pistol:23}
M.~E.~Pistol, \emph{Generating isospectral but not isomorphic quantum graphs},
https://arxiv.org/pdf/2104.12885.pdf v12 (2023).

\bibitem[R{\"u}13]{rueckriemen:13}
R.~R{\"u}ckriemen, \emph{Recovering quantum graphs from their {B}loch
  spectrum}, Ann. Inst. Fourier (Grenoble) \textbf{63} (2013), 1149--1176.

\bibitem[Shu94]{shubin:94}
M.~A. Shubin, \emph{Discrete magnetic {L}aplacian}, Comm. Math. Phys.
  \textbf{164} (1994), 259--275.

\bibitem[Sun85]{sunada:85}
T.~Sunada, \emph{Riemannian coverings and isospectral manifolds}, Ann. of Math.
  (2) \textbf{121} (1985), 169--186.

\bibitem[Sun94]{sunada:94c}
T.~Sunada, \emph{A discrete analogue of periodic magnetic {S}chr\"odinger
  operators}, Geometry of the spectrum (Seattle, WA, 1993), Contemp. Math.,
  vol. 173, Amer. Math. Soc., Providence, RI, 1994, pp.~283--299.

\bibitem[Sun13]{sunada:13}
T.~Sunada, \emph{Topological crystallography}, Surveys and Tutorials in the
  Applied Mathematical Sciences, vol.~6, Springer, Tokyo, 2013, With a view
  towards discrete geometric analysis.

\bibitem[Tan98]{tan:98}
J.~Tan, \emph{On isospectral graphs}, Interdiscip. Inform. Sci. \textbf{4}
  (1998), 117--124.

\bibitem[vB01]{von-below:01}
J.~von Below, \emph{Can one hear the shape of a network?}, Partial differential
  equations on multistructures ({L}uminy, 1999), Lecture Notes in Pure and
  Appl. Math., vol. 219, Dekker, New York, 2001, pp.~19--36.

\bibitem[vDH03]{dam-haemers:03}
E.~R. van Dam and W.~H. Haemers, \emph{Which graphs are determined by their
  spectrum?}, Linear Algebra Appl. \textbf{373} (2003), 241--272, Special issue
  on the Combinatorial Matrix Theory Conference (Pohang, 2002).

\bibitem[Wey12]{weyl:12}
H.~Weyl, \emph{Das asymptotische {V}erteilungsgesetz der {E}igenwerte linearer
  partieller {D}ifferentialgleichungen (mit einer {A}nwendung auf die {T}heorie
  der {H}ohlraumstrahlung)}, Math. Ann. \textbf{71} (1912), 441--479.

\bibitem[Z14]{zelditch:14}
S.~Zelditch, \emph{Survey on the inverse spectral problem}, ICCM Not.
  \textbf{2} (2014), 1--20.

\end{thebibliography}
\end{document}